\def \dis {\displaystyle}
\def \ecart {\noalign{\medskip}}
\def \RR {\mathbb R}
\def \CC {\mathbb C}
\def \NN {\mathbb N}
\def \L {\mathcal{L}}
\theoremstyle{definition}
\newtheorem{Th}{Theorem}[section]
\newtheorem{Prop}[Th]{Proposition}
\newtheorem{Lem}[Th]{Lemma}
\newtheorem{Cor}[Th]{Corollary}
\newtheorem{Def}[Th]{Definition}
\newtheorem{Rem}[Th]{Remark}
\def \refT #1{Theorem~\ref{#1}}
\def \refL #1{Lemma~\ref{#1}}
\def \refP #1{Proposition~\ref{#1}}
\def \refR #1{Remark~\ref{#1}}
\title{Solvability of a transmission problem in $L^p$-spaces with generalized diffusion equation}
\author{Alexandre Thorel \\
{\scriptsize Université Le Havre Normandie, Normandie Univ, LMAH UR 3821, 76600 Le Havre, France.}\\
{\scriptsize alexandre.thorel@univ-lehavre.fr}}
\date{\empty}
\begin{document}

\maketitle

\begin{abstract}
We study a transmission problem, in population dynamics, between two juxtaposed habitats. In each habitat, we consider a generalized diffusion equation composed by the Laplace operator and a biharmonic term. We, indeed, allow that the coefficients in front of each term could be negative or null. Using semigroups theory and functional calculus, we give some relation between coefficients to obtain the existence and the uniqueness of the classical solution in $L^p$-spaces. \\
\textbf{Key Words and Phrases}: Operational differential equations, functional calculus, analytic semigroups, BIP operators, maximal regularity. \\
\textbf{2020 Mathematics Subject Classification}: 35B65, 35J48, 35R20, 47A60, 47D06. 
\end{abstract} 

\section{Introduction}

In this work, using analytic semigroups theory, we study a transmission problem for a coupled system of generalized diffusion equations in $L^p$-spaces, with $p \in (1,+\infty)$. We denote by generalized diffusion equation, an equation of the following form 
$$k \Delta^2 u - l \Delta u = g,$$
with $k,l \in \RR$ and $g$ given. This equation is obtained using the Landau-Ginzburg free energy functional, we refer to \cite{cohen-murray} or \cite{ochoa} for more details. This work is a natural continuation of the works done in \cite{LLMT} and \cite{thorel 2}. Here, we investigate the influence of the Laplace operator and the biharmonic term in the diffusion. In population dynamics, the Laplace operator model the short range diffusion whereas the biharmonic operator represents the long range diffusion. Thus, generalized diffusion is a linear combination of these two operators. 

Usually, in most models $k,l > 0$, but in many works like for instance \cite{cohen-murray}, \cite{gros}, \cite{murray}, \cite{novick-cohen} or \cite{ochoa}, the authors explain that the biharmonic term plays a stabilizing role if $k > 0$ and a destabilizing role when $k < 0$. This is why, in the present paper, we consider $k \in \RR\setminus\{0\}$ and $l \in \RR$.

Many works have treated generalized diffusion equations and transmission problems associated to it. For instance, we refer to \cite{BBT}, \cite{cantin-thorel}, \cite{cohen-murray}, \cite{DMT}, \cite{LMMT}, \cite{LMT}, \cite{Cone 1}, \cite{Cone 2}, \cite{novick-cohen}, \cite{ochoa} and \cite{okubo}, for the study of such an equation in population dynamics and to \cite{denk}, \cite{hassan}, \cite{LLMT} and \cite{thorel 2} for transmission problems associated to it. Note that \cite{LLMT} and \cite{thorel 2}, consider applications in population dynamics wheras \cite{denk} and \cite{hassan}, consider applications in plate theory. 

 
We define $\Omega =\Omega _{-}\cup \Omega _{+}$, the $n$-dimensional area, $n\in \NN\setminus \{0,1\}$, constituted by the two juxtaposed habitats $\Omega_- := (a,\gamma) \times \omega$ and $\Omega_+ := (\gamma,b) \times \omega$ with their interface $\Gamma =\left\{ \gamma \right\} \times \omega$, where $a,\gamma,b \in\mathbb{R}$ with $a<\gamma<b$ and $\omega $ being a bounded domain of $\mathbb{R}^{n-1}$.

We investigate the study of the following transmission problem 
$$(EQ_{pde})\left\{\begin{array}{llll}
k_+ \Delta^2 u_+ - l_+ \Delta u_+ &=& g_+, & \text{in }\Omega_+ \\
k_- \Delta^2 u_- - l_- \Delta u_- &=& g_-, & \text{in }\Omega_-,
\end{array}\right.$$
where $k_\pm \in \RR\setminus\{0\}$, $l_\pm \in \RR$, $u_\pm \in \Omega_\pm$ are population density and $g_\pm \in L^p(\Omega_\pm)$ are given. 

Note that the case $k_\pm,l_\pm > 0$ has been already treated in \cite{LLMT} and the case $l_\pm =0$ with $k_\pm >0$ has been already treated in \cite{thorel 2}. Thus, in the the present article, the most important new results concern the other cases. Indeed, as in the two previous cases, the key point of this article is the inversion of determinant operator but unlike the two previous cases, the writing of this determinant operator by the functional calculus must take into account the sign of the constants $\frac{l_+}{k_+}$ and $\frac{l_-}{k_-}$. Therefore it is necessary to detail each case and for each of these cases, we give conditions between the different constants $k_\pm$ and $l_\pm$ which make it possible to invert this determinant operator.

Here, we denote by $(x,y)$ the spatial variables with $x\in (a,b)$ and $y\in \omega $. The above equations are supplemented by the following boundary and transmission conditions 
\begin{equation*}
(BC_{pde})\left\{\hspace{-0.2cm} \begin{array}{l}
(1)\left\{\begin{array}{rcll}
u_{-}(x,\zeta)&=&0, & x\in (a,\gamma),~ \zeta \in \partial\omega \\
u_{+}(x,\zeta)&=&0, & x\in (\gamma,b),~ \zeta \in \partial\omega \\ 
\dis \Delta u_- (x,\zeta)&=&0, & x\in (a,\gamma),~ \zeta \in \partial\omega \\
\dis \Delta u_+ (x,\zeta)&=&0, & x\in (\gamma,b),~ \zeta \in \partial\omega
\end{array}\right. \\ \ecart
(2) \left\{\begin{array}{rclr}
u_{-}(a,y)&=&\varphi _{1}^{-}(y), & y \in \omega \\ 
u_{+}(b,y)&=&\varphi _{1}^{+}(y), & y \in \omega \\
\dis\frac{\partial u_{-}}{\partial x}(a,y) &=& \varphi_{2}^{-}(y), & y\in \omega \\ \ecart
\dis\frac{\partial u_{+}}{\partial x}(b,y) &=& \varphi_{2}^{+}(y), & y\in \omega,
\end{array}\right.\end{array}\right.
\end{equation*}
where $\varphi_1^\pm$ and $\varphi_2^\pm$ are given in suitable spaces, and
\begin{equation*}
(TC_{pde})\left\{ 
\begin{array}{rcll}
u_{-}&=&u_{+}&\text{on \ }\Gamma \\ 
\dis\frac{\partial u_{-}}{\partial x}&=&\dis\frac{\partial u_{+}}{\partial x} &\text{on \ }\Gamma \medskip \\ 
k_{-}\Delta u_{-}&=&k_{+}\Delta u_{+} & \text{on \ }\Gamma \\ 
\dis\frac{\partial }{\partial x}\left( k_{-}\Delta u_{-} - l_- u_-\right) &=& \dis\frac{\partial }{\partial x}\left( k_{+}\Delta u_{+}-l_{+}u_{+}\right) & \text{on \ }\Gamma.
\end{array}%
\right.
\end{equation*}

In $(BC_{pde})$, the boundary conditions on the two first lines of (1) means that the individuals could not lie on the boundaries $(a,b) \times \partial \omega $, because, for instance, they die or the edge is impassable. The boundary conditions on the two second lines of (1) mean that there is no dispersal in the normal direction. It follows that the dispersal vanishes on $(a,b) \times \partial \omega $. In (2), the population density and the flux are given, for instance on $\left\{ a\right\} \times \omega $ and on $\left\{ b\right\} \times \omega $. This signifies that the habitats are not isolated. Then, in $(TC_{pde})$, the two first transmission conditions mean the continuity of the density and its flux at the interface, while the two second express, in some sense, the continuity of the dispersal and its flux at the interface $\Gamma$.

This article is organized as follows.

In section 2, we give our operational problem. Section 3 is devoted to some recall on BIP operators and real interpolation spaces. In section 4, we give our assumptions and main results. Then, in section 5, we state some preliminary results that will be useful to prove our main result. Finally, section 6, which is composed of three parts, is devoted to the proof of our main result.

\section{Operational formulation}

Since we have
$$\Delta u = \frac{\partial^2 u}{\partial x^2} + \Delta_y u,$$
we set
\begin{equation}\label{def A0}
\left\{\begin{array}{l}
D(A_0) := \{\psi \in W^{2,p}(\omega) : \psi = 0 ~\text{on} ~\partial \omega\} \\ \ecart
\forall \psi \in D(A_0), \quad A_0 \psi = \Delta_y \psi.
\end{array}\right.
\end{equation}
Thus, using operator $A_0$, it follows that each equation of $(EQ_{pde})$ writes
$$u_\pm^{(4)}(x) + (2A_0 - \frac{l_\pm}{k_\pm}\,I) u''_\pm (x) + (A^2_0 - \frac{l_\pm}{k_\pm} \,A_0 ) u_\pm(x) = f_\pm(x),$$
where $u_\pm(x):=u_\pm(x,\cdot)$, $f_\pm(x):=g_\pm(x,\cdot)/k_\pm$ and $f_- \in L^p(a,\gamma;L^p(\omega))$, $f_+ \in L^p(\gamma,b;L^p(\omega))$ with $p \in (1, +\infty)$.

Moreover, since the boundary conditions (1) of $(BC_{pde})$ are included in the domain of $A_0$, problem $(EQ_{pde})-(BC_{pde})-(TC_{pde})$ becomes
\begin{equation*}
\left\{\begin{array}{l}
\begin{array}{lll}
\dis u_+^{(4)}(x) + (2A_0 - \frac{l_+}{k_+}\,I) u''_+ (x) + (A^2_0 - \frac{l_+}{k_+} \,A_0 ) u_+(x) &\hspace*{-0.15cm} = &\hspace*{-0.15cm} f_+(x), \quad \text{for a.e. } x \in (\gamma, b) \\ \ecart
\dis u_-^{(4)}(x) + (2A_0 - \frac{l_-}{k_-}\,I) u''_- (x) + (A^2_0 - \frac{l_-}{k_-} \,A_0 ) u_-(x) &\hspace*{-0.15cm} = &\hspace{-0.15cm} f_-(x), \quad \text{for a.e. } x \in (a,\gamma)
\end{array} \\ \ecart
\begin{array}{llllll}
u_- (a) & = & \varphi_1^-, \quad u_+ (b) & = & \varphi_1^+ \\ \ecart
u'_-(a) & = &\varphi_2^-, \quad u'_+(b) & = & \varphi_2^+
\end{array} \\ \ecart
\begin{array}{lll}
\hspace*{-0.15cm}\begin{array}{lll}
u_-(\gamma) & = & u_+(\gamma) \\ \ecart
u'_-(\gamma) & = & u'_+(\gamma) 
\end{array} \\ \ecart 
k_- u''_-(\gamma)  +  k_- A_0 u_-(\gamma) & = & k_+ u''_+(\gamma)  +  k_+ A_0 u_+(\gamma) \\ \ecart
k_- u_-^{(3)}(\gamma)  +  k_- A_0 u'_-(\gamma) - l_- u'_-(\gamma) & = & k_+ u_+^{(3)}(\gamma)  + k_+ A_0 u'_+(\gamma) - l_+ u'_+(\gamma),
\end{array}\end{array}\right.
\end{equation*}

Then, we will consider a more general case using $(A,D(A))$, instead of $(A_0,D(A_0))$, with $-A$ a BIP operator of angle $\theta_A \in(0,\pi)$ on a UMD space $X$, see below for the definitions of BIP operator and UMD spaces, and $f\in L^p(a,b;X)$. 

More precisely, setting $r_\pm = \frac{l_\pm}{k_\pm}$, we study the following transmission problem (P):
\begin{equation*}
\text{(P)} \left\{\hspace{-0.225cm}\begin{array}{l}
(EQ)\left\{\hspace{-0.1cm}\begin{array}{lll}
\dis u_+^{(4)}(x) + (2A - r_+ \,I) u''_+ (x) + (A^2 - r_+ \,A ) u_+(x) &\hspace*{-0.15cm} = &\hspace*{-0.15cm} f_+(x), \quad x \in (\gamma, b) \\ \ecart
\dis u_-^{(4)}(x) + (2A - r_-\,I) u''_- (x) + (A^2 - r_- \,A ) u_-(x) &\hspace*{-0.15cm} = &\hspace{-0.15cm} f_-(x), \quad x \in (a,\gamma) 
\end{array}\right.\\ \ecart
(BC) \left\{\hspace{-0.1cm}\begin{array}{llllll}
u_- (a) & = & \varphi_1^-, \quad u_+ (b) & = & \varphi_1^+ \\ \ecart
u'_-(a) & = &\varphi_2^-, \quad u'_+(b) & = & \varphi_2^+
\end{array}\right.\\ \ecart
(TC) \left\{\hspace{-0.1cm}\begin{array}{lll}
\hspace*{-0.15cm}\begin{array}{lll}
u_-(\gamma) & = & u_+(\gamma) \\ \ecart
u'_-(\gamma) & = & u'_+(\gamma) 
\end{array} \\ \ecart 
k_- u''_-(\gamma)  +  k_- Au_-(\gamma) & = & k_+ u''_+(\gamma)  +  k_+ Au_+(\gamma) \\ \ecart
k_- u_-^{(3)}(\gamma)  +  k_- Au'_-(\gamma) - l_- u'_-(\gamma) & = & k_+ u_+^{(3)}(\gamma)  + k_+ Au'_+(\gamma) - l_+ u'_+(\gamma).
\end{array}\right.\end{array}\right.
\end{equation*}
The transmission conditions $(TC)$ will be divided into 
$$(TC1)\left\{\begin{array}{lll}
u_-(\gamma) & = & u_+(\gamma) \\ \ecart
u'_-(\gamma) & = & u'_+(\gamma),
\end{array}\right.$$
and
$$(TC2)\left\{\begin{array}{lll}
k_- u''_-(\gamma)  +  k_- Au_-(\gamma) & = & k_+ u''_+(\gamma)  +  k_+ Au_+(\gamma) \\ \ecart
k_- u_-^{(3)}(\gamma)  +  k_- Au'_-(\gamma) - l_- u'_-(\gamma) & = & k_+ u_+^{(3)}(\gamma)  + k_+ Au'_+(\gamma) - l_+ u'_+(\gamma).
\end{array}\right.$$
Note that $(TC2)$ is well defined, see Lemma~\ref{lemtrace} below. 

We will search a classical solution of problem (P), that is a solution $u$ such that 
\begin{equation}\label{u-=u(a,gamma),u+=u(gamma,a)}
\left\{\begin{array}{ll}
u_+:= u_{|(\gamma,b)} \in W^{4,p}(\gamma,b;X)\cap L^p(\gamma,b;D(A^2)), & u''_+ \in L^p(\gamma,b;D(A)), \\ \ecart

u_-:= u_{|(a,\gamma)} \in W^{4,p}(a,\gamma;X)\cap L^p(a,\gamma;D(A^2)), & u''_- \in L^p(a,\gamma;D(A)),
\end{array}\right.
\end{equation}
and which satisfies $(EQ)-(BC)-(TC)$. 

Note that such a solution is not $W^{4,p}(a,b;X)$ but uniquely $W^{4,p}(a,\gamma;X)$ in $\Omega_-$ and $W^{4,p}(\gamma,b;X)$ in $\Omega_+$.

\section{Definitions and prerequisites}

\subsection{The class of Bounded Imaginary Powers of operators}

\begin{Def}
A Banach space $X$ is a UMD space if and only if for all $p\in(1,+\infty)$, the Hilbert transform is bounded from $L^p(\RR,X)$ into itself (see \cite{bourgain} and \cite{burkholder}).
\end{Def}
\begin{Def}\label{Def op Sect}
A closed linear operator $T_1$ is called sectorial of angle $\alpha \in[0,\pi)$ if
$$
\begin{array}{l}
i)\quad \sigma(T_1)\subset \overline{S_{\alpha}},\\\ecart
ii)\quad\forall~\alpha'\in (\alpha,\pi),\quad \sup\left\{\|\lambda(\lambda\,I-T_1)^{-1}\|_{\L(X)} : ~\lambda\in\CC\setminus\overline{S_{\alpha'}}\right\}<+\infty,
\end{array}
$$
where
\begin{equation}\label{defsector}
S_\alpha\;:=\;\left\{\begin{array}{lll}
\left\{ z \in \CC : z \neq 0 ~~\text{and}~~ |\arg(z)| < \alpha \right\} & \text{if} & \alpha \in (0, \pi), \\ \ecart
\,(0,+\infty) & \text{if} & \alpha = 0,
\end{array}\right.
\end{equation} 
see \cite{haase}, p. 19.
\end{Def}
\begin{Rem}
From \cite{komatsu}, p. 342, we know that any injective sectorial operator~$T_1$ admits imaginary powers $T_1^{is}$, $s\in\RR$, but, in general, $T_1^{is}$ is not bounded.
\end{Rem} 
\begin{Def}
Let $\theta \in [0, \pi)$. We denote by BIP$(X,\theta)$, the class of sectorial injective operators $T_2$ such that
\begin{itemize}
\item[] $i) \quad ~~\overline{D(T_2)} = \overline{R(T_2)} = X,$

\item[] $ii) \quad ~\forall~ s \in \RR, \quad T_2^{is} \in \L(X),$

\item[] $iii) \quad \exists~ C \geq 1 ,~ \forall~ s \in \RR, \quad ||T_2^{is}||_{\L(X)} \leq C e^{|s|\theta}$,
\end{itemize}
see~\cite{pruss-sohr}, p. 430.
\end{Def}

\subsection{Interpolation spaces}

Here we recall some properties about real interpolation spaces in particular cases.
\begin{Def}
Let $T_3 : D(T_3) \subset X \longrightarrow X$ be a linear operator such that
\begin{equation}\label{def T3}
(0,+\infty) \subset \rho(T_3)\quad \text{and}\quad \exists~C>0:\forall~t>0, \quad \|t(T_3-tI)^{{\mbox{-}}1}\|_{\L(X)} \leqslant C.
\end{equation}
Let $k \in \NN \setminus \{0\}$, $\theta \in (0,1)$ and $q \in [1,+\infty]$. We will use the real interpolation spaces 
$$(D(T_3^k),X)_{\theta,q} = (X,D(T_3^k))_{1{\mbox{-}}\theta,q},$$ 
defined, for instance, in \cite{lions-peetre}, or in \cite{lunardi}. 

In particular, for $k=1$, we have the following characterization
$$(D(T_3),X)_{\theta,q} := \left\{\psi \in X:t\longmapsto t^{1{\mbox{-}}\theta} \|T_3(T_3-tI)^{{\mbox{-}}1}\psi\|_X \in L_*^q(0,+\infty)\right\},$$
where $L_*^q(0,+\infty)$ is given by
$$L_*^q(0,+\infty;\CC) := \left\{f \in L^q(0,+\infty) : \left(\int_0^{+\infty} |f(t)|^q\frac{dt}{t}\right)^{1/q} < + \infty \right\}, \quad \text{for } q \in [1,+\infty),$$
and for $q = +\infty$, by
$$L_*^\infty(0,+\infty;\CC) := \sup_{t \in (0,+\infty)} |f(t)|,$$
see \cite{da prato-grisvard} p. 325, or \cite{grisvard}, p. 665, Teorema 3, or section 1.14 of \cite{triebel}, where this space is denoted by $(X, D(T_3))_{1{\mbox{-}}\theta,q}$. Note that we can also characterize the space $(D(T_3),X)_{\theta,q}$ taking into account the Osservazione, p. 666, in \cite{grisvard}.

We set also, for any $k \in \NN\setminus\{0\}$
\begin{equation*}
(D(T_3),X)_{k+\theta,q}\;:=\;\left\{\psi\in D(T_3^k) : T_3^k\psi\in (D(T_3),X)_{\theta,q}\right\},
\end{equation*}
and
\begin{equation*}
(X,D(T_3))_{k+\theta,q}\;:=\;\left\{\psi\in D(T_3^k) : T_3^k\psi\in (X,D(T_3))_{\theta,q}\right\},
\end{equation*}
see \cite{lunardi 2}, definition 3.2, p. 64.
\end{Def}

\begin{Rem}
The general situation of the real interpolation space $(X_0,X_1)_{\theta,q}$ with $X_0$, $X_1$ two Banach spaces such that $X_0 \hookrightarrow X_1$, is described in \cite{lions-peetre}.
\end{Rem}
\begin{Rem}\label{Rem Réitération}
Note that for $T_3$ satisfying \eqref{def T3}, $T_3^k$ is closed for any $k \in \NN \setminus \{0\}$ since $\rho(T_3) \neq \emptyset$; consequently, if $k \theta < 1$, we have
$$(D(T_3^k),X)_{\theta,q} = (X,D(T_3^k))_{1-\theta,q} = (X,D(T_3))_{k-k\theta,q} = (D(T_3),X)_{(k-1) + k\theta,q} \subset D(T_3^{k-1}).$$
For more details see \cite{lunardi}, (2.1.13), p. 43 or \cite{grisvard}, p. 676, Teorema 6.
\end{Rem}
We recall the following lemma.
\begin{Lem}[\cite{grisvard}]\label{lemtrace}
Let $T_3$ be a linear operator satisfying \eqref{def T3}. Let $u$ such that
$$u \in W^{n,p}(a_1,b_1;X) \cap L^p(a_1,b_1;D(T_3^k)),$$ 
where $a_1,b_1 \in \RR$ with $a_1<b_1$, $n,k \in \NN\setminus\{0\}$ and $p \in (1,+\infty)$. Then for any $j \in \NN$ satisfying the Poulsen condition $0<\frac{1}{p}+j < n$ and $s \in \{a_1,b_1\}$, we have
$$
u^{(j)}(s) \in (D(T_3^k),X)_{\frac{j}{n}+\frac{1}{np},p}.
$$
\end{Lem}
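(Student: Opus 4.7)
My plan is to reduce the claim to the classical trace theorem for vector-valued Sobolev spaces. Set $Y := D(T_3^k)$ endowed with the graph norm; thanks to \eqref{def T3}, the operator $T_3$, and hence $T_3^k$, is closed, so $Y$ is a Banach space continuously embedded in $X$. The hypothesis then reads $u \in W^{n,p}(a_1,b_1;X) \cap L^p(a_1,b_1;Y)$. By localisation near $s \in \{a_1,b_1\}$ together with a standard reflection/extension procedure that preserves both $W^{n,p}(\cdot;X)$ regularity and $L^p(\cdot;Y)$ membership, it suffices to prove the statement on $\RR_+ = (0,+\infty)$ with the trace taken at $s = 0$.

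Next, I would invoke the classical Lions--Peetre trace theorem, in the vector-valued form recorded by Grisvard (Teorema~2, p.~678 of~\cite{grisvard}): for every $j \in \NN$ satisfying the Poulsen condition $\frac{1}{p} + j < n$, the trace operator $u \mapsto u^{(j)}(0)$ is continuous and surjective from $W^{n,p}(\RR_+;X) \cap L^p(\RR_+;Y)$ onto the real interpolation space $(X,Y)_{1-\frac{j}{n}-\frac{1}{np},\,p}$. Using the elementary identity $(X,Y)_{1-\theta,p} = (Y,X)_{\theta,p}$ recalled in the definition preceding the lemma, this rewrites as
$$u^{(j)}(0) \;\in\; (D(T_3^k),X)_{\frac{j}{n} + \frac{1}{np},\,p},$$
which is exactly the conclusion sought. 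The Poulsen condition $\frac{1}{p} + j < n$ is precisely what ensures that the interpolation parameter $\frac{j}{n} + \frac{1}{np}$ lies strictly inside $(0,1)$, so that the trace space is nontrivial and the trace map is well defined and continuous.

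The main obstacle is notational rather than analytic: one must carefully align the two conventions used for the real interpolation spaces -- the "smaller first" convention $(X,Y)_{\theta,p}$ natural in the Lions--Peetre/Grisvard mean theorem, versus the "larger first" convention $(D(T_3^k),X)_{\theta,p}$ adopted in the statement -- and identify the correct scaling exponent from the trace theorem. Beyond this bookkeeping, no further ingredient is required apart from the continuous embedding $Y \hookrightarrow X$ furnished by \eqref{def T3}, together with the closedness of $T_3^k$.
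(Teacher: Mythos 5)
Your argument is correct and follows essentially the same route as the paper, which gives no proof of its own but simply cites Grisvard's trace theorem (Teorema~2', p.~678 of \cite{grisvard}); your reduction to the half-line and the identification $(X,D(T_3^k))_{1-\theta,p}=(D(T_3^k),X)_{\theta,p}$ are exactly the bookkeeping implicit in that citation. Nothing further is needed.
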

This result is proved in \cite{grisvard}, p. 678, Teorema 2'.

\section{Assumptions and statement of results}

\subsection{Hypotheses}

In all the sequel, $r_+,r_- \in \RR$, $k_+k_- > 0$ and $A$ denotes a closed linear operator in $X$. We assume the following hypotheses:
\begin{center}
\begin{tabular}{l}
$(H_1)\quad$ $X$ is a UMD space,\\ \ecart
$(H_2)\quad$  $\left[\min(r_+,r_-,0),+\infty\right)\subset\rho(A)$,\\ \ecart
$(H_3)\quad$ $-A \in$ BIP$(X,\theta_A)$ for some $\theta_A \in [0,\pi)$,\\ \ecart
$(H_4)\quad$ $-A \in $ Sect$(0)$.
\end{tabular}
\end{center}

\begin{Rem}\label{Remconsequences}\hfill
\begin{enumerate}
\item Due to $(H_2)$, if at least one parameter $r_+$ or $r_-$ is negative or null, then $0 \in \rho(A)$. 

\item Operator $A_0$, defined by \eqref{def A0}, satisfies all the previous hypotheses with $X=L^q(\omega)$, $q \in (1, +\infty)$ and $r_\pm \in \rho(A_0)$. From \cite{rubio}, Proposition~3, p. 207, $X$ satisfies $(H_1)$ and taking $A_0 + r_\pm I$ in \cite{gilbarg-trudinger}, Theorem~9.15, p.~241 and Lemma~9.17, p. 242, we deduce that $A_0$ satisfies~$(H_2)$. Moreover, $(H_3)$ is satisfied for every $\theta_A \in [0,\pi)$, from \cite{pruss-sohr2}, Theorem~C, p.~166-167. Finally, $(H_4)$ is satisfied thanks to \cite{haase}, section 8.3, p.~232.   

\item In the scalar case, to solve each equation of $(EQ)$, we need to solve the characteristic equations 
$$\chi^4 + (2A - r_\pm)\chi^2 + (A^2 - r_\pm A) = 0,$$ 
thus, in our operational case, we consider the following operators
\begin{equation}\label{defLM}
L_-\;:=\; -\sqrt{-A + r_- \,I}, \quad L_+\;:=\; -\sqrt{-A + r_+ \,I} \quad \text{and} \quad  M\;:=\; -\sqrt{-A}.
\end{equation}
Due to $(H_2)$ and $(H_3)$, $-A$, $-A + r_- \,I$ and $-A+ r_+\, I$ are sectorial operators, so the existence of $L_-$, $L_+$ and $M$ is ensured, see for instance \cite{haase}, e), p. 25 and \cite{arendt-bu-haase}, Theorem~2.3, p.~69.

\item From \cite{haase}, Proposition 3.1.9, p. 65, we have $D(L_-) = D(L_+) = D(M)$. Thus, for $n,m \in \NN$ and $m \leqslant n$
$$D(L^n_\pm) = D(M^n) = D(L^m_\pm M^{n-m}) =  D(M^m L^{n-m}_\pm).$$

\item From \cite{pruss-sohr}, Theorem 3, p. 437 and \cite{arendt-bu-haase}, Theorem 2.3, p. 69, assumptions $(H_2)$ and $(H_3)$ imply that $-A+ r_\pm\,I \in$ BIP$(X,\theta_A)$ and due to \cite{haase}, Proposition $3.2.1,$ e), p. 71, that
$$-L_-,-L_+,-M \in\text{BIP}(X,\theta_A/2).$$
Moreover, from \cite{pruss-sohr}, Theorem 4, p. 441, we get 
$$-(L_-+M),-(L_++M) \in \text{BIP}(X,\theta_A/2+\varepsilon),$$ 
for any $\varepsilon \in (0,\pi/2-\theta_A/2)$.

Since we have $0 < \theta_A/2 < \pi/2$, then due to \cite{pruss-sohr}, Theorem 2, p. 437, we deduce that $L_-$, $L_+$, $M$, $L_- + M$ and $L_+ + M$ generate bounded analytic semigroups $(e^{xL_-})_{x \geqslant 0}$, $(e^{xL_+})_{x \geqslant 0}$, $(e^{xM})_{x \geqslant 0}$, $(e^{x(L_-+M)})_{x \geqslant 0}$ and $(e^{x(L_++M)})_{x \geqslant 0}$.

\item Using the Dore-Venni sums theorem, see \cite{dore-venni}, we deduce from $(H_1)$, $(H_2)$ and $(H_3)$ that $0 \in \rho(M)\cap \rho(L_-)\cap \rho(L_+)\cap \rho(L_++M)\cap \rho(L_-+M)$.

\item From \eqref{defLM}, we deduce that
\begin{equation}\label{L2-M2}
\forall~\psi \in D(M^2), \quad (L_+^2-M^2)\psi = r_+ \,\psi \quad \text{and} \quad (L_-^2-M^2)\psi = r_-\,\psi.
\end{equation}
and also that
\begin{equation}\label{L-M=}
\forall~\psi \in D(M), ~~ (L_+ - M) \psi = r_+ (L_+ + M)^{-1} \psi ~~\text{and} ~~ (L_- - M) \psi = r_- (L_- + M)^{-1} \psi.  
\end{equation}
\end{enumerate}
\end{Rem}

\subsection{Main results}

To solve our operational problem (P), we introduce two problems:
$$(P_+)\left\{\begin{array}{l}
u_+^{(4)}(x) + (2A - r_+\,I) u''_+ (x) + (A^2 - r_+ \,A ) u_+(x) = f_+(x), \quad \text{for a.e. } x \in (\gamma, b) \\ \ecart
\hspace{-0.15cm}\begin{array}{lllllll}
u_+(\gamma) & = & \psi_1, & u_+(b) & = & \varphi_1^+ \\ \ecart
u'_+(\gamma) & = & \psi_2, & u'_+(b) & = & \varphi_2^+.
\end{array}\end{array}\right.$$
and
$$(P_-)\left\{\begin{array}{l}
u_-^{(4)}(x) + (2A - r_-\,I) u''_- (x) + (A^2 - r_- \,A ) u_-(x) = f_-(x), \quad \text{for a.e. } x \in (a, \gamma) \\ \ecart
\hspace{-0.15cm}\begin{array}{lllllll}
u_-(a) & = & \varphi_1^-, & u_-(\gamma) & = & \psi_1 \\ \ecart
u'_-(a) & = & \varphi_2^-, & u'_-(\gamma) & = & \psi_2,
\end{array}\end{array}\right.$$
\begin{Rem}\label{Rem P+ P-} 
Recall that $u$ is a classical solution of (P) if and only if there exist $\psi_1, \psi_2 \in X$ such that
\begin{center}
\begin{tabular}{l}
$~(i)\quad$ ~$u_-$ is a classical solution of $(P_-)$,\\ \ecart
$\,(ii)\quad$ \,$u_+$ is a classical solution of $(P_+)$,\\ \ecart
$(iii)\quad$ $u_-$ and $u_+$ satisfy $(TC2)$.
\end{tabular}
\end{center}
\end{Rem}
Therefore, our aim is to state that there exists a unique couple $(\psi_1,\psi_2)$ which satisfies $(i)$, $(ii)$ and $(iii)$. 

\begin{Th} \label{Th principal}
Let $f_- \in L^p(a,\gamma; X)$ and $f_+ \in L^p(\gamma,b; X)$ with $p \in (1,+\infty)$. Assume that $(H_1)$, $(H_2)$, $(H_3)$, $(H_4)$ hold and $k_+k_- >0$. Thus
\begin{enumerate}
\item when $r_+,r_- \in \RR\setminus\{0\}$,
\begin{itemize}
\item if $r_+ > 0$ and $r_- > 0$,

\item if $r_+ < 0$ and $r_- < 0$, such that 
$$(l_+ - l_-) (k_+ - k_-) \geqslant 0,$$

\item if $r_+ > 0$ and $r_- < 0$, such that 
$$-6 l_-k_+ + l_+k_+ + l_-k_- \geqslant 0,$$

\item if $r_+ < 0$ and $r_- > 0$, such that 
$$-6 l_+k_- + l_+k_+ + l_-k_- \geqslant 0,$$
\end{itemize}

\item when $r_+ \in \RR\setminus\{0\}$ and $r_-=0$ with $\dis\frac{k_-}{k_+} \leqslant 2$,
\begin{itemize}
\item if $r_+ > 0$ such that 
$$r_+ \geqslant \frac{\left(\sqrt{t+1} + \sqrt{t}\right)^2}{t^2} \frac{k_+^2}{4k_-^2},\quad \text{for}~t\in \left(0,\frac{1}{r_+\|A^{-1}\|_{\L(X)}} \right)~\text{fixed},$$

\item if $r_+ < 0$ such that
$$r_+ \leqslant - \frac{27 k_+^2}{64k_-^2},$$ 
\end{itemize}

\item when $r_+ =0$ and $r_-\in \RR\setminus\{0\}$ with $\dis\frac{k_+}{k_-} \leqslant 2$,
\begin{itemize}
\item if $r_- > 0$ such that 
$$r_- \geqslant \frac{\left(\sqrt{t+1} + \sqrt{t}\right)^2}{t^2} \frac{k_-^2}{4k_+^2},\quad \text{for}~t\in \left(0,\frac{1}{r_-\|A^{-1}\|_{\L(X)}} \right)~\text{fixed},$$

\item if $r_- < 0$ such that
$$r_- \leqslant - \frac{27 k_-^2}{64k_+^2},$$ 
\end{itemize}
\end{enumerate}
then, there exists a unique classical solution $u$, of the transmission problem (P) if and only if 
\begin{equation}\label{reg phi +-}
\varphi_1^+, \varphi_1^- \in (D(A),X)_{1 + \frac{1}{2p},p} \quad  \text{and} \quad \varphi_2^+, \varphi_2^- \in (D(A),X)_{1+\frac{1}{2} + \frac{1}{2p},p}.
\end{equation}
\end{Th}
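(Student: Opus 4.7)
The plan is to follow the reduction outlined in Remark~\ref{Rem P+ P-}: a classical solution of (P) exists and is unique if and only if there is a unique pair $(\psi_1,\psi_2)$ of interface traces such that the decoupled boundary-value problems $(P_-)$ and $(P_+)$ admit classical solutions which, in addition, satisfy $(TC2)$. I would proceed in three main steps.

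\textbf{Step 1: representation of the solutions of $(P_\pm)$.} The characteristic polynomial $\chi^4+(2A-r_\pm)\chi^2+(A^2-r_\pm A)$ factorises through the commuting operators $L_\pm$ and $M$ defined in \eqref{defLM}, since $L_\pm^2 = -A + r_\pm I$ and $M^2 = -A$. By the Dore--Venni theorem together with Remark~\ref{Remconsequences}, each of $L_\pm$ and $M$ generates a bounded analytic semigroup and each of $L_\pm+M$ is boundedly invertible. The scalar ``exponential basis'' representation then extends to the operational case, and for each sub-problem I would write the unique classical solution of $(P_\pm)$ (in the sense of \eqref{u-=u(a,gamma),u+=u(gamma,a)}) explicitly as a linear combination of $e^{xL_\pm}$, $e^{(c-x)L_\pm}$, $e^{xM}$, $e^{(c-x)M}$ (with $c$ the opposite endpoint) applied to vectors in $X$ depending linearly on $\psi_1,\psi_2,\varphi_j^\pm$, together with the two convolution terms built from $f_\pm$. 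The regularity~\eqref{reg phi +-} is precisely what Lemma~\ref{lemtrace} forces on the four traces $u_\pm(c),u'_\pm(c)$ of any function in the class \eqref{u-=u(a,gamma),u+=u(gamma,a)}, while sufficiency of \eqref{reg phi +-} for solvability of each $(P_\pm)$ with $L^p$ maximal regularity is guaranteed by the BIP/UMD framework encoded by $(H_1)$--$(H_4)$.

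\textbf{Step 2: reduction of $(TC2)$ to a $2\times 2$ operator system.} Since $(TC1)$ is automatically enforced by the common interface values $\psi_1,\psi_2$, there only remains $(TC2)$. Inserting the representations from Step~1 into $(TC2)$, the traces $u''_\pm(\gamma)$ and $u^{(3)}_\pm(\gamma)$ become explicit linear expressions in $\psi_1,\psi_2$ (modulo known data). Using the identities \eqref{L2-M2}--\eqref{L-M=} to rewrite $k_\pm A u_\pm(\gamma)$ and $k_\pm A u'_\pm(\gamma) - l_\pm u'_\pm(\gamma)$ in terms of $L_\pm$, $M$ and $(L_\pm+M)^{-1}$, the system collapses to
\[
\mathcal{D}\begin{pmatrix}\psi_1\\ \psi_2\end{pmatrix}\;=\;\begin{pmatrix}F_1\\ F_2\end{pmatrix},
\]
where $\mathcal{D}$ is a $2\times 2$ matrix with entries in $\L(X)$ built from $L_+,L_-,M,(L_\pm+M)^{-1}$, and $F_1,F_2$ depend linearly and continuously on $f_\pm,\varphi_j^\pm$. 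Existence and uniqueness of $(\psi_1,\psi_2)$ is then equivalent to the invertibility of the determinant operator $\Delta\in\L(X)$ canonically associated with $\mathcal{D}$.

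\textbf{Step 3: invertibility of $\Delta$ (the main obstacle).} This is exactly where the case analysis of the statement arises. Using the joint functional calculus for $-A$ and $-A+r_\pm I$ coming from $(H_2)$--$(H_4)$, I would represent $\Delta$ as $g(-A)$ for a holomorphic scalar symbol $g$ defined on a sector containing the spectrum of $-A$; invertibility of $\Delta$ then reduces to showing that $g$ is bounded below on that sector. The form of $g$ genuinely depends on the signs of $r_+$ and $r_-$, because the spectra of the square roots $-\sqrt{-A+r_\pm I}$ and $-\sqrt{-A}$ are related differently in each sign regime. The algebraic conditions $(l_+-l_-)(k_+-k_-)\geqslant 0$, $-6l_-k_++l_+k_++l_-k_-\geqslant 0$ and $-6l_+k_-+l_+k_++l_-k_-\geqslant 0$ of item~1 are exactly the sign conditions that prevent the symbol from vanishing in the four mixed-sign regimes; when one of the $r_\pm$ degenerates to $0$ the operators $L_\pm$ and $M$ collapse onto one another and the lower bounds on $|r_\mp|$ in items~2 and~3, which involve $\|A^{-1}\|_{\L(X)}$ and the ratios $k_\pm/k_\mp$, provide the quantitative margin needed for the bound-below estimate. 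I would verify each of these scalar estimates by a direct computation of the roots of $g$, then transport the bound to the operator setting by the bounded $H^\infty$-calculus implied by $(H_3)$. Once $\Delta^{-1}\in\L(X)$ is obtained, $(\psi_1,\psi_2)$ is uniquely determined in the trace spaces required by \eqref{reg phi +-}, and combining with Steps~1--2 yields the full equivalence stated in the theorem. I expect the bulk of the work, and the only delicate part, to lie in this last verification of the sign/size hypotheses case by case.
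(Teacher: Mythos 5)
Your overall architecture is the paper's: reduce (P) to the two half-line problems $(P_\pm)$ glued by $(TC2)$, collapse $(TC2)$ to a $2\times2$ system in $(\psi_1,\psi_2)$ (this is exactly \refT{Th syst trans} and \refT{Th syst trans M}), and locate all the case analysis in the invertibility of the determinant operator. The gap is in how you propose to invert it. You claim invertibility reduces to showing the scalar symbol $g$ is \emph{bounded below on a complex sector} and that the bound transfers to the operator by ``the bounded $H^\infty$-calculus implied by $(H_3)$''. Neither half of this is available: $(H_3)$ is a BIP assumption, which does not imply a bounded $H^\infty$-calculus, and the determinant symbol grows polynomially at infinity, so it is not an $H^\infty$ function in the first place. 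The paper's actual mechanism is a factorization $\det(\Lambda_1)=-W^{-2}B_1F_1$ (see \eqref{det}): the principal part $B_1$ is a polynomial in $L_\pm,M$ whose invertibility is obtained from the Dore--Venni/Pr\"uss--Sohr sum theorems, and it is precisely here --- in proving the positivity of the auxiliary symbol $b_2$ on $(r,+\infty)$, \refP{Prop B1 inv} --- that the four sign conditions of item~1 enter (note that the full symbol $f_1$ is automatically negative on $(r,+\infty)$ by \eqref{f < 0}, with no hypothesis; the conditions are needed for the \emph{factor} $b_2$, not for $f_1$). The remaining factor $F_1=I+(\text{exponentially decaying terms})$, \eqref{F1}, is inverted by \refL{Lem inv}, which only requires non-vanishing of the symbol on the positive half-line --- a far weaker and actually verifiable condition than a lower bound on a sector. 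In the degenerate case $r_-=0$ two further ingredients you do not isolate are needed: the constraint $k_-/k_+\leqslant 2$ is what makes the principal part $B_5$ invertible (\refP{Prop B5 inv}), and the sign of the symbol $f_2$ is only obtained on $(tr_+,+\infty)$ (\refL{Lem f2 > 0}), so the condition involving $\|A^{-1}\|_{\L(X)}$ serves to localize the spectrum of $-A$ inside that region before \refL{Lem inv} can be applied.

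There is a second, independent gap at the very end. From $[\det(\Lambda_1)]^{-1}\in\L(X)$ you conclude that $(\psi_1,\psi_2)$ is ``uniquely determined in the trace spaces required''; but bounded invertibility only places $(\psi_1,\psi_2)$ in $X\times X$, whereas the solvability criteria for $(P_\pm)$ demand the interpolation regularity \eqref{reg (psi1,psi2)}. This is not automatic: the paper devotes its final subsection to it, writing $[\det(\Lambda_1)]^{-1}=N^{-1}+N^{-1}R_{\det(\Lambda_1)}$ with $N$ the explicit principal part and $R_{\det(\Lambda_1)}(X)\subset D(M)$, tracking the interpolation regularity of the right-hand sides $S_1,S_2$ (respectively $S_3,S_4$), and only then reading off $\psi_1\in(D(M),X)_{3+\frac1p,p}$ and $\psi_2\in(D(M),X)_{2+\frac1p,p}$ as in \eqref{syst psi1 psi2 2}. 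Without some version of this smoothing decomposition your argument proves existence and uniqueness of a solution of the interface system in $X$, but not that the resulting $u_\pm$ are classical solutions.
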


\begin{Rem}
Since the third case is the symmetric of the second one, replacing $k_+$ by $k_-$ and $l_+$ by $l_-$, the proof if exactly the same. Thus, we omit it. 
\end{Rem}

\begin{Rem}
If $A=A_0$, to satisfy the first condition set in the second or the third case of Theorem \ref{Th principal}, since $\|A^{-1}\|_{\L(X)} \geqslant \frac{1}{C_\omega}$, where $C_\omega$ is the Poincar\'e constant, it suffices to take $\omega$ sufficiently small because the more $\omega$ is small, the more $C_\omega$ is large, see for instance \cite{Cantrell-Cosner}, Corollary 2.2, p. 95 and Corollary 2.3, p. 96, or \cite{ACT}, Remark 3, p. 15.
\end{Rem}

As a consequence of the previous Theorem, we state the following corollary.
\begin{Cor}
Let $n \geqslant 2$, $f_+\in L^p(\Omega_+)$ and $f_-\in L^p(\Omega_-)$ with $p\in(1,+\infty)$ and $\dis p > n$. Assume that $\omega$ is a bounded open set of $\RR^{n-1}$ with $C^2$-boundary. Let $k_+,k_-,l_+ > 0$ and $l_- < 0$ with $k_+=k_-$. Then, there exists a unique solution $u$ of $(P_{pde})$, such that we have $u_-\in W^{4,p}(\Omega_-)$ and $u_+\in W^{4,p}(\Omega_+)$, if and only if 
$$\left\{\begin{array}{l}
\varphi_1^\pm, \varphi_2^\pm \in  W^{2,p}(\omega)\cap W^{1,p}_0(\omega) \\ \ecart
\Delta \varphi_1^\pm, \in W^{2-\frac{1}{p},p}(\omega) \cap W_0^{1,p}(\omega) \\ \ecart
\Delta\varphi_2^\pm \in W^{1-\frac{1}{p},p}(\omega) \cap W_0^{1,p}(\omega).
\end{array}\right.$$
\end{Cor}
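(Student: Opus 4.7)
The plan is to specialize \refT{Th principal} to the concrete operator $A=A_0$ defined by~\eqref{def A0} with $X=L^p(\omega)$, and then translate the abstract regularity conditions~\eqref{reg phi +-} into the stated Sobolev ones. First, by \refR{Remconsequences}(2), $-A_0$ satisfies $(H_1)$--$(H_4)$ on $X=L^p(\omega)$. With $l_+>0$, $l_-<0$ and $k_+=k_->0$, one has $r_+=l_+/k_+>0$ and $r_-=l_-/k_-<0$, placing us in the third sub-case of case~1 of \refT{Th principal}. The required compatibility $-6l_-k_++l_+k_++l_-k_-\geqslant 0$ reduces, via $k_+=k_-$, to $k_+(l_+-5l_-)$, which is strictly positive since $l_+>0$ and $l_-<0$. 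Hence \refT{Th principal} yields a unique classical solution $u$ of $(P)$ in the sense of~\eqref{u-=u(a,gamma),u+=u(gamma,a)} if and only if~\eqref{reg phi +-} holds.

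Second, I would lift this abstract regularity to $W^{4,p}(\Omega_\pm)$. Agmon--Douglis--Nirenberg elliptic regularity (using $\partial\omega\in C^2$) identifies $D(A_0)=W^{2,p}(\omega)\cap W^{1,p}_0(\omega)$; combined with $u_\pm\in W^{4,p}(a,\gamma;L^p(\omega))\cap L^p(a,\gamma;D(A_0^2))$ and the mixed control $u''_\pm\in L^p(a,\gamma;D(A_0))$, a Fubini-type / anisotropic Sobolev argument supplies every pure and mixed derivative of order~$\leqslant 4$ in $L^p(\Omega_\pm)$, hence $u_\pm\in W^{4,p}(\Omega_\pm)$. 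The hypothesis $p>n$ then ensures $W^{4,p}(\Omega_\pm)\hookrightarrow C^3(\overline{\Omega_\pm})$, so that $(BC_{pde})$ and $(TC_{pde})$ are satisfied in the classical pointwise sense.

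Third, I would translate \eqref{reg phi +-} into the stated Sobolev conditions. Unfolding the nested definition $(D(A_0),X)_{1+\eta,p}=\{\psi\in D(A_0):\Delta\psi\in(X,D(A_0))_{1-\eta,p}\}$ and using the standard identification of $(L^p(\omega),D(A_0))_{\sigma,p}$ with $W^{2\sigma,p}(\omega)$, augmented by the zero Dirichlet trace condition (written $W^{1,p}_0(\omega)$ in the corollary) whenever $2\sigma>1/p$, one obtains: $\varphi_i^\pm\in D(A_0)=W^{2,p}(\omega)\cap W^{1,p}_0(\omega)$ for $i=1,2$; taking $\eta=1/(2p)$, $\Delta\varphi_1^\pm\in W^{2-1/p,p}(\omega)\cap W^{1,p}_0(\omega)$; and taking $\eta=1/2+1/(2p)$, $\Delta\varphi_2^\pm\in W^{1-1/p,p}(\omega)\cap W^{1,p}_0(\omega)$ (the zero-trace condition being meaningful since $p>n\geqslant 2$ gives $1-1/p>1/p$). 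This matches the list in the statement exactly.

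The main obstacle is the interpolation-space identification in step~3: carefully tracking which Dirichlet compatibility condition each iterated real interpolation space between $L^p(\omega)$ and $D(A_0)$ carries, together with the Fubini/anisotropic Sobolev step in~2 combining axial and transverse regularity to recover the full $W^{4,p}(\Omega_\pm)$ norm from the abstract regularity class~\eqref{u-=u(a,gamma),u+=u(gamma,a)}.
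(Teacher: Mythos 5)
Your proposal is correct and follows exactly the route the paper intends: the paper omits the proof and delegates it to \cite{LLMT}, Corollary 1 and \cite{LMMT}, Corollary 2.7, which proceed precisely as you do --- specialize \refT{Th principal} to $A=A_0$ on $X=L^p(\omega)$ (the relevant sub-case being $r_+>0$, $r_-<0$ with $-6l_-k_++l_+k_++l_-k_-=k_+(l_+-5l_-)>0$), and then identify the iterated real interpolation spaces $(D(A_0),X)_{1+\frac{1}{2p},p}$ and $(D(A_0),X)_{1+\frac12+\frac{1}{2p},p}$ with the stated Sobolev classes. The only point to watch is that your step 2 implicitly uses $D(A_0^2)\subset W^{4,p}(\omega)$, which requires more boundary smoothness than the stated $C^2$; this caveat is already present in the corollary's statement itself and not specific to your argument.
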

The proof is quite similar to the one stated in \cite{LLMT}, Corollary 1, p. 2941, or in \cite{LMMT}, Corollary 2.7, p. 357. Thus we omit it. 

\section{Preliminary results}

In all the sequel, we set 
$$c = \gamma-a > 0\quad \text{and} \quad d = b-\gamma > 0.$$ 
From \refR{Rem P+ P-}, to solve problem (P) we must first study problems $(P_+)$ and $(P_-)$. To this end, we need the following invertibility result obtained in \cite{LMMT} and \cite{thorel}. 
\begin{Lem}[\cite{LMMT} and \cite{thorel}]\label{lem U+- V+- inv}
Assume that $(H_1)$, $(H_2)$, $(H_3)$ and $(H_4)$ hold. Then operators $U_\pm$, $V_\pm \in \L(X)$ defined by
\begin{equation} \label{U V +-}
\left\{ \begin{array}{rcl}
U_+ &:=& \left\{\begin{array}{ll}
\dis I - e^{d(L_+ + M)} - \frac{1}{r_+} (L_+ + M)^2 \left(e^{dM} - e^{dL_+}\right), & \text{if } r_+ \in \RR \setminus\{0\} \\
\dis I - e^{2d M} + 2d Me^{d M}, & \text{if } r_+ = 0
\end{array}\right.  \\ \ecart

V_+ &:=& \left\{\begin{array}{ll}
\dis I - e^{d(L_+ + M)} + \frac{1}{r_+} (L_+ + M)^2 \left(e^{dM} - e^{dL_+}\right), & \text{if } r_- \in \RR \setminus\{0\} \\ 
\dis I - e^{2d M} - 2d Me^{d M}, & \text{if } r_+ = 0
\end{array}\right.  \\ \ecart

U_- &:=& \left\{\begin{array}{ll}
\dis I - e^{c(L_- + M)} - \frac{1}{r_-} (L_- + M)^2 \left(e^{cM} - e^{cL_-}\right), & \text{if } r_- \in \RR \setminus\{0\} \\

\dis I - e^{2c M} + 2c Me^{c M}, & \text{if } r_- = 0
\end{array}\right.  \\ \ecart

V_- &:=& \left\{\begin{array}{ll}
\dis I - e^{c(L_- + M)} + \frac{1}{r_-} (L_- + M)^2 \left(e^{cM} - e^{cL_-}\right), & \text{if } r_- \in \RR \setminus\{0\} \\ \ecart
\dis I - e^{2c M} - 2c Me^{c M}, & \text{if } r_- = 0,
\end{array}\right. 
\end{array}\right.
\end{equation}
are invertible with bounded inverses.
\end{Lem}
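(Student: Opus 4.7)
The lemma is established in \cite{LMMT} for $r_\pm \neq 0$ and in \cite{thorel} for $r_\pm = 0$; I sketch the main ideas. By the symmetries $d \leftrightarrow c$, $L_+ \leftrightarrow L_-$, and the sign flip between $U$ and $V$, it suffices to treat $U_+$.

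Assume first $r_+ \neq 0$. By Remark \ref{Remconsequences}(5)--(6), $L_+$ and $M$ commute (both arise from the holomorphic calculus of $-A$), both generate bounded analytic semigroups, and $0 \in \rho(L_+) \cap \rho(M) \cap \rho(L_++M)$. From $\partial_s[e^{sM+(d-s)L_+}] = (M-L_+)e^{sM+(d-s)L_+}$, integration yields $e^{dM} - e^{dL_+} = (M-L_+)\int_0^d e^{sM+(d-s)L_+}\,ds$. Combining this with \eqref{L-M=} in the form $(L_++M)^2/r_+ = (L_++M)(L_+-M)^{-1}$ gives
\[U_+ = I - e^{d(L_++M)} + (L_++M)\int_0^d e^{sM+(d-s)L_+}\,ds,\]
and then, using $(L_++M)e^{sM+(d-s)L_+} = 2Me^{sM+(d-s)L_+} - \partial_s[e^{sM+(d-s)L_+}]$ and integrating in $s$,
\[U_+ = (I + e^{dL_+})(I - e^{dM}) + 2M\int_0^d e^{sM+(d-s)L_+}\,ds.\]

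The principal part $(I + e^{dL_+})(I - e^{dM})$ is invertible in $\mathcal{L}(X)$: by the spectral mapping theorem for analytic semigroups and the fact that $\sigma(M), \sigma(L_+)$ lie in sectors of the open left half-plane bounded away from $0$, one has $1 \notin \sigma(e^{dM})$ and $-1 \notin \sigma(e^{dL_+})$. The remainder $2M\int_0^d e^{sM+(d-s)L_+}\,ds$ defines a bounded operator on $X$ via the joint holomorphic functional calculus of the commuting pair $(L_+, M)$; here the BIP property of $-(L_++M)$ (Remark \ref{Remconsequences}(5)) and the Dore-Venni theorem are crucial. A perturbation argument then yields $U_+^{-1} \in \mathcal{L}(X)$.

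The case $r_+ = 0$, where $L_+ = M$ and $U_+ = I - e^{2dM} + 2dMe^{dM}$, is handled by the same spectral-mapping and BIP machinery; see \cite{thorel}. The main difficulty throughout is the boundedness of the remainder integral: the naive pointwise bound $\|Me^{sM+(d-s)L_+}\|_{\mathcal{L}(X)} \lesssim 1/\min(s,d-s)$ is not integrable on $(0,d)$, so cancellation must be extracted via the joint functional calculus -- which is the technical heart of \cite{LMMT}.
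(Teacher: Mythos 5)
Your algebraic reduction is correct: using $(L_++M)(M-L_+)=-r_+I$ on $D(M^2)$ and the two integrations by parts, one does obtain $U_+=(I+e^{dL_+})(I-e^{dM})+2M\int_0^d e^{sM+(d-s)L_+}\,ds$, and the invertibility of the principal part via spectral radius $<1$ is fine. But the final step, ``a perturbation argument then yields $U_+^{-1}\in\L(X)$,'' is a genuine gap. Boundedness of the remainder does not make it a perturbation in any usable sense: it is not small in norm (for the scalar symbol, both the principal part and the remainder vanish to the same order as $x\to0^+$, so neither dominates), and for an abstract operator $A$ on an abstract UMD space it has no reason to be compact, so neither a Neumann-series nor a Fredholm argument is available. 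Establishing invertibility is precisely the content of the lemma, and your sketch stops exactly where the work begins. (Incidentally, the step you single out as the ``technical heart'' is not a difficulty at all: writing $Me^{sM+(d-s)L_+}=e^{sM}\,ML_+^{-1}\,L_+e^{(d-s)L_+}$ for $s$ near $0$ and $(Me^{sM})e^{(d-s)L_+}$ for $s$ near $d$ gives the two-sided bound $\min(1/s,1/(d-s))\leqslant 2/d$, so the integrand is uniformly bounded and the Bochner integral converges trivially; no cancellation is needed there.)

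The cited proofs (\cite{LMMT}, Proposition 5.4, and \cite{thorel}, Proposition 4.5), whose mechanism is reproduced in Section~\ref{sect funct calc} of this paper, proceed quite differently: one writes $U_+=u_{d,r_+}(-A)$ for the explicit scalar symbol $u_{d,r_+}$, checks that $1-u_{d,r_+}\in\mathcal{E}_\infty(S_\theta)$, proves the \emph{scalar} inequality $u_{d,r_+}(x)>0$ for all $x$ on the relevant half-line (\refR{Rem f1 +- > 0 et f2,f3 +- < 0}, resting on \cite{LMMT}, Lemma 5.2, and on $\sinh t>t$ in the case $r_+=0$), and then invokes the inversion lemma for the holomorphic functional calculus of the sectorial operator $-A$ (\refL{Lem inv}). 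The non-vanishing of the symbol on $(0,+\infty)$ is the irreplaceable ingredient, and it appears nowhere in your argument. To repair your proof you would either have to supply that scalar positivity and route the conclusion through \refL{Lem inv} anyway, or find an independent reason why the identity-plus-remainder structure forces injectivity and surjectivity; as written, neither is done.
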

From \refR{Remconsequences}, statement $4$, $U_\pm$ and $V_\pm$ are well defined. For a detailed proof, see \cite{LMMT}, Proposition 5.4 with $k=r_\pm$ and \cite{thorel}, Proposition 4.5, p. 645. 

\subsection{Transmission system}

\subsubsection{First case}

Assume that $r_\pm \in \RR\setminus \{0\}$. We set
\begin{equation}\label{Pi+}
\left \{ \begin{array}{rcl}
P_1^+ & = & \dis k_+ (L_+ + M) \left(U^{-1}_+(I + e^{dM})(I - e^{dL_+}) +  V^{-1}_+(I - e^{dM})(I+e^{dL_+}) \right) \\ \ecart

P_2^+ & = & \dis k_+ (L_+ + M) \left(U^{-1}_+ (I - e^{dM})(I - e^{dL_+}) + V^{-1}_+ (I+e^{dM})(I + e^{dL_+}) \right) \\ \ecart

P_3^+ & = & \dis  k_+ (L_+ + M) L_+ \left(U^{-1}_+(I + e^{dM})(I + e^{dL_+}) + V^{-1}_+(I - e^{dM})(I - e^{dL_+}) \right),
\end{array}\right.
\end{equation}
and similarly
\begin{equation}\label{Pi-}
\left \{ \begin{array}{rcl}
P_1^- & = & \dis k_- (L_- + M) \left(U^{-1}_-(I + e^{cM})(I - e^{cL_-}) + V^{-1}_-(I - e^{cM}) (I+e^{cL_-}) \right) \\ \ecart

P_2^- & = & \dis k_- (L_- + M)\left(U^{-1}_- (I - e^{cM})(I - e^{cL_-}) + V^{-1}_- (I+e^{cM})(I + e^{cL_-}) \right) \\ \ecart

P_3^- & = & \dis k_- (L_- + M) L_- \left( U^{-1}_-(I + e^{cM})(I + e^{cL_-}) + V^{-1}_-(I - e^{cM}) (I - e^{cL_-}) \right).
\end{array}\right.
\end{equation}
Moreover, we note
\begin{equation}\label{S2}
\begin{array}{lll}
S_1 & = & \dis k_+ (L_+ + M)\left( U^{-1}_+(I - e^{dL_+}) \tilde{\varphi_2}^+ + V^{-1}_+(I + e^{dL_+}) \tilde{\varphi_4}^+ \right) \\ \ecart
&& \dis - k_- (L_- + M) \left(U^{-1}_-(I - e^{cL_-}) \tilde{\varphi_2}^- + V^{-1}_-(I + e^{cL_-}) \tilde{\varphi_4}^-\right),
\end{array}
\end{equation}
and
\begin{equation}\label{S1}
\begin{array}{lll}
S_2 & = & \dis -k_+ (L_+ + M) \left( U^{-1}_+(I + e^{dM}) \tilde{\varphi_1}^+ + V^{-1}_+ (I - e^{dM}) \tilde{\varphi_3}^+\right) \\ \ecart
&& \dis - k_- (L_- + M)  \left(U^{-1}_-(I + e^{cM}) \tilde{\varphi_1}^- + V^{-1}_-(I - e^{cM}) \tilde{\varphi_3}^- \right) - 2 M^{-1} R_1,
\end{array}
\end{equation}
with  
\begin{equation}\label{R1}
R_1 = - k_+ F'''_+(\gamma) + k_+ M^2 F'_+(\gamma) + l_+ F'_+(\gamma) + k_- F'''_-(\gamma) - k_- M^2 F'_-(\gamma) - l_- F'_-(\gamma),
\end{equation}
where $F_+$ is the unique classical solution of problem
\begin{equation}\label{pb F+} 
\left\{\begin{array}{l}
u_+^{(4)}(x) + (2A - r_+\,I) u''_+ (x) + (A^2 - r_+ \,A ) u_+(x) = f_+(x), \quad \text{for a.e. } x \in (\gamma, b) \\ \ecart
u_+(\gamma) = u_+(b) = u''_+(\gamma) = u''_+(b) = 0,
\end{array}\right.
\end{equation}
and $F_-$ is the unique classical solution of problem
\begin{equation} \label{pb F-} 
\left\{\begin{array}{l}
u_-^{(4)}(x) + (2A - r_-\,I) u''_- (x) + (A^2 - r_- \,A ) u_-(x) = f_-(x), \quad \text{for a.e. } x \in (a,\gamma) \\ \ecart
u_-(a) = u_-(\gamma) = u''_-(a) = u''_+(\gamma) = 0.
\end{array}\right.
\end{equation}
For an explicit representation formula of the solution of both previous problems, we refer to \cite{LMMT}, Theorem 2.2, p.~355-356.
\begin{Rem}\label{Rem F+-}
Since $F_\pm$ is a classical solution of \eqref{pb F+}, respectively \eqref{pb F-}, from \refL{lemtrace}, it follows that, for $j = 0,1,2,3$ and $s = a$, $\gamma$ or $b$
$$F_\pm^{(j)}(s) \in (D(M),X)_{3-j + \frac{1}{p},p}.$$
\end{Rem}
Now, with our notations, we recall a useful result of \cite{LLMT}, Theorem~4.6, p. 2945. This result is proved for $r_\pm > 0$ but clearly remains true for $r_\pm < 0$.
\begin{Th}[\cite{LLMT}]\label{Th syst trans}
Let $f_- \in L^p(a,\gamma; X)$ and $f_+ \in L^p(\gamma,b; X)$, with $p \in (1,+\infty)$. Assume that $(H_1)$, $(H_2)$, $(H_3)$ and $(H_4)$ hold. Then, the transmission problem $(P)$ has a unique classical solution if and only if the data $\varphi_1^+, \varphi_1^-, \varphi_2^+, \varphi_2^-$ satisfy \eqref{reg phi +-} and system 
\begin{equation}\label{syst trans gén}
\left\{\begin{array}{cclll}
\left(P_1^- - P_1^+ \right) M \psi_1 &+& \left(P_2^+ + P_2^-\right) \psi_2 &=& S_1 \\ \ecart
\left(P_3^+ + P_3^- \right) \psi_1 &+& \left(P_1^- - P_1^+ \right) \psi_2 &=& S_2,
\end{array}\right.
\end{equation} 
has a unique solution $(\psi_1,\psi_2)$ such that 
\begin{equation}\label{reg (psi1,psi2)}
(\psi_1,\psi_2) \in (D(A),X)_{1+\frac{1}{2p},p} \times (D(A),X)_{1+\frac{1}{2}+\frac{1}{2p},p}.
\end{equation}
\end{Th}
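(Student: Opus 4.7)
The plan is to invoke Theorem~\ref{Th syst trans} to reduce the entire statement to the unique solvability of the $2\times 2$ operator system \eqref{syst trans gén} in the interpolation class \eqref{reg (psi1,psi2)}. First I would point out that although \refT{Th syst trans} is written with the operators $P_i^\pm$, $S_j$ from \eqref{Pi+}--\eqref{S1}, the same derivation (representing each $u_\pm$ as the sum of the particular solution $F_\pm$ of \eqref{pb F+}--\eqref{pb F-} and an explicit $(P_\pm)$-homogeneous part built from the semigroups $(e^{xL_\pm})$, $(e^{xM})$, $(e^{x(L_\pm+M)})$) is still available when $r_+=0$ or $r_-=0$, using the corresponding $r=0$ variants of $U_\pm, V_\pm$ in \refL{lem U+- V+- inv}. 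This gives the same block system, with $P_i^\pm$ re-expressed through $(I\pm e^{2dM})$ and $d M e^{dM}$ in the limit $r\to 0$.

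Next, I would write the $2\times 2$ matrix $\mathcal M$ of \eqref{syst trans gén} and compute the determinant operator
$$\Lambda \;:=\; (P_1^- - P_1^+)\,M\,(P_1^- - P_1^+) \;-\; (P_2^+ + P_2^-)(P_3^+ + P_3^-),$$
expanding each $P_i^\pm$ via \eqref{Pi+}--\eqref{Pi-}. Using \refR{Remconsequences}~(4)--(7), all operators involved lie in the joint $H^\infty$-calculus generated by $L_+, L_-, M$, which commute and share spectra in a common sector avoiding $0$; so $\Lambda$ may be written as $F(L_+, L_-, M)$ for an explicit holomorphic symbol $F$. After factoring out the invertible pieces $(L_++M)$, $(L_-+M)$ and $U_\pm^{-1}, V_\pm^{-1}$ (bounded by \refL{lem U+- V+- inv}), the invertibility of $\Lambda$ reduces to showing that the reduced symbol $\widetilde F(L_+,L_-,M)$ is bounded below on the spectrum, equivalently that it stays outside a neighbourhood of $0$. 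This will be verified case by case.

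The case analysis is where the coefficient inequalities enter, and it is the main obstacle. For case~(1) with $r_\pm>0$ I would cite \cite{LLMT}, as mentioned right after \refT{Th syst trans}. For $r_+<0$, $r_-<0$ I would split $\widetilde F$ into its ``short-range'' part (proportional to $k_++k_-$) and a remainder proportional to $l_+-l_-$ and $k_+-k_-$; the hypothesis $(l_+-l_-)(k_+-k_-)\geqslant 0$ is exactly what forces these two pieces to have the same sign after passing through the functional calculus, so that their sum vanishes only on $\sigma(-A)\cap\{0\}$, which is empty thanks to $(H_2)$. The mixed-sign subcases (1c), (1d) follow the same scheme: the algebraic inequality $-6l_\mp k_\pm + l_+k_+ + l_-k_- \geqslant 0$ is designed to make the coefficient of the indefinite cross-term nonnegative once $L_+$ and $L_-$ are expressed via \eqref{L2-M2}--\eqref{L-M=}. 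For cases~(2) and (3) with $r_\mp=0$, one of the $U_\pm, V_\pm$ degenerates and the symbol acquires an extra factor $dM$; the explicit lower bounds on $|r_\pm|$ (involving $k_\pm/k_\mp$ and $\|A^{-1}\|$) are precisely the thresholds that guarantee $\widetilde F$ does not vanish on $\sigma(-A)\subset[\|A^{-1}\|^{-1},+\infty)$, which is checked by a direct scalar estimate on real $\lambda\in\sigma(-A)$.

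Once $\Lambda^{-1}\in\mathcal L(X)$ is secured, I would solve \eqref{syst trans gén} by
$$\psi_1 \;=\; \Lambda^{-1}\bigl((P_1^- - P_1^+)S_1 - (P_2^++P_2^-)S_2\bigr),\qquad \psi_2 \;=\; \Lambda^{-1}\bigl(\dots\bigr),$$
and verify \eqref{reg (psi1,psi2)} by tracking the interpolation regularity of $S_1,S_2$: the terms $\tilde\varphi_i^\pm$ inherit the hypothesis \eqref{reg phi +-}, and the $F_\pm'''(\gamma), F_\pm'(\gamma)$ contributions in \eqref{R1} lie in the correct interpolation space by \refR{Rem F+-}, so applying $\Lambda^{-1}$ (which is, up to bounded factors, of negative order in $M$) lands in exactly $(D(A),X)_{1+\frac{1}{2p},p}\times(D(A),X)_{1+\frac12+\frac{1}{2p},p}$. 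This proves the ``if'' direction. For the ``only if'' direction, I would apply \refL{lemtrace} to a classical solution: with $n=4$, $k=2$, and $j\in\{0,1\}$, the boundary values $u_\pm(a), u_\pm(b), u_\pm'(a), u_\pm'(b)$ automatically lie in $(D(A^2),X)_{\frac{j}{4}+\frac{1}{4p},p}$, which by the reiteration identity of \refR{Rem Réitération} coincides with the spaces in \eqref{reg phi +-}. The main difficulty, as indicated, is the case-dependent symbolic invertibility of $\Lambda$; everything else is bookkeeping via the functional calculus and the interpolation machinery already quoted.
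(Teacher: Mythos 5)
There is a genuine gap, and it is one of scope: your proposal does not prove \refT{Th syst trans} at all, but rather sketches the proof of \refT{Th principal}. \refT{Th syst trans} is the \emph{reduction} statement: under $(H_1)$--$(H_4)$ alone, and with no sign or size conditions on $k_\pm, l_\pm$, unique classical solvability of (P) is \emph{equivalent} to the conjunction of the data regularity \eqref{reg phi +-} with unique solvability of system \eqref{syst trans gén} in the class \eqref{reg (psi1,psi2)}. Your opening move, ``invoke Theorem~\ref{Th syst trans}'', is therefore circular, and the bulk of your argument --- computing the determinant operator $\Lambda$, inverting it by functional calculus under the inequalities $(l_+-l_-)(k_+-k_-)\geqslant 0$ and $-6l_\mp k_\pm+l_+k_++l_-k_-\geqslant 0$, then solving for $(\psi_1,\psi_2)$ by Cramer-type formulas --- is the content of \refP{Prop B1 inv}, \refP{prop inv F}, \refP{Prop det lambda1 inv} and subsection 6.3, i.e.\ of the main theorem's proof. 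Those coefficient inequalities are not hypotheses of \refT{Th syst trans}; the theorem holds without them and without any invertibility of $\det(\Lambda_1)$, which can perfectly well fail under $(H_1)$--$(H_4)$ alone (in the sufficiency direction, unique solvability of the system is a \emph{hypothesis}, not something to be proved). Your first paragraph on the degenerate cases $r_\pm=0$ is likewise out of scope: the present statement lives in the first case $r_\pm\in\RR\setminus\{0\}$, the case $r_-=0$ being the separate \refT{Th syst trans M}.

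What the proof actually requires, and what your proposal omits, is the reduction itself, along the lines the paper writes out explicitly for \refT{Th syst trans M} (the paper handles the present statement by citing \cite{LLMT}, Theorem 4.6, and observing that the proof for $r_\pm>0$ carries over verbatim to $r_\pm<0$): (i) by \refR{Rem P+ P-}, $u$ solves (P) iff $u_\pm$ solve $(P_\pm)$ with common interface data $(\psi_1,\psi_2)=(u(\gamma),u'(\gamma))$ and $(TC2)$ holds; (ii) by the representation formulas of \cite{LMMT} and \cite{LLMT}, $(P_\pm)$ has a unique classical solution iff $\varphi_i^\pm$ and $\psi_i$ lie in the interpolation spaces of \eqref{reg phi/psi i P+}, which via the reiteration identities \eqref{égalités espaces interpolations} yields the ``iff'' on regularity; (iii) a trace computation of $u_\pm''(\gamma)$ and $u_\pm^{(3)}(\gamma)$ from those explicit formulas showing that $(TC2)$ is \emph{literally} system \eqref{syst trans gén} with $S_1,S_2$ given by \eqref{S2}--\eqref{S1}; and (iv) the converse glueing: a unique $(\psi_1,\psi_2)$ satisfying \eqref{reg (psi1,psi2)} produces, through $(P_\pm)$, the unique classical solution of (P). Your use of \refL{lemtrace} with $n=4$, $k=2$, $j\in\{0,1\}$ and reiteration does correctly give the necessity of \eqref{reg phi +-}, but you never address the necessity half concerning the system --- namely that the traces $(u(\gamma),u'(\gamma))$ of a classical solution solve \eqref{syst trans gén} and are the \emph{only} solution in the class \eqref{reg (psi1,psi2)} (any other such pair would generate a second classical solution of (P), contradicting uniqueness). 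As written, the proposal establishes a different theorem under stronger hypotheses and leaves the actual equivalence unproved.
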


\subsubsection{Second case}

Now, assume that $r_- = 0$; then $l_-=0$ and the transmission conditions $(TC2)$ becomes
\begin{equation}\label{TC2'}
(TC2')\left\{\begin{array}{lll}
k_- u''_-(\gamma)  +  k_- Au_-(\gamma) & = & k_+ u''_+(\gamma)  +  k_+ Au_+(\gamma) \\ \ecart
k_- u_-^{(3)}(\gamma)  +  k_- Au'_-(\gamma) & = & k_+ u_+^{(3)}(\gamma)  + k_+ Au'_+(\gamma) - l_+ u'_+(\gamma).
\end{array}\right.
\end{equation}
Our aim here is to establish a similar result to the previous one. To this end, we set
\begin{equation}\label{Qi-}
\left\{\begin{array}{lcl}
Q_1^- & = & k_- \left(U_-^{-1} + V_-^{-1} \right)  \left(I - e^{2cM}\right) \\ \ecart

Q_2^- & = & k_- \left(U_-^{-1}\left(I - e^{cM}\right)^2 + V_-^{-1} \left(I + e^{cM}\right)^2 \right) \\ \ecart

Q_3^- & = & k_- \left(U_-^{-1} \left(I + e^{cM}\right)^2 + V_-^{-1} \left(I - e^{cM}\right)^2\right),
\end{array}\right.
\end{equation}
Moreover, we note  
\begin{equation}\label{S3}
\begin{array}{lll}
S_3 & = & 2 k_- M \left(U_-^{-1} \left(I - e^{c M} \right) \tilde{\varphi_2}^- + V_-^{-1} \left(I + e^{c M} \right) \tilde{\varphi_4}^- \right) \\ \ecart
&& - k_+ (L_+ + M) \left(U_+^{-1}\left(I - e^{dL_+}\right) \tilde{\varphi_2}^+ + V_+^{-1} \left(I + e^{dL_+}\right) \tilde{\varphi_4}^+ \right),
\end{array}
\end{equation}
and
\begin{equation}\label{S4}
\begin{array}{lll}
S_4 & = & - 2 k_- M \left(U_-^{-1}\left(I + e^{c M}\right)\tilde{\varphi}_2^- + V_-^{-1} \left(I - e^{c M}\right)\tilde{\varphi}_4^-\right) \\ \ecart
&& - k_+ (L_+ + M) \left(U_+^{-1} \left(I + e^{dM}\right) \tilde{\varphi}_1^+ + V_+^{-1}\left(I - e^{dM}\right)\tilde{\varphi}_3^+\right) + 2 M^{-1} R_2,
\end{array}
\end{equation}
with
\begin{equation}\label{R2}
R_2 = - k_- \tilde{F}'''_-(\gamma) + k_- M^2 \tilde{F}'_-(\gamma) + k_+ F'''_+(\gamma) -  k_+ M^2 F'_+(\gamma) - l_+ F'_+(\gamma),
\end{equation}
where $\tilde{F}_-$ is the classical solution of problem
\begin{equation}\label{pb F- M} 
\left\{\begin{array}{l}
u_-^{(4)}(x) + 2A u''_- (x) + A^2 u_-(x) = f_-(x), \quad \text{a.e. } x \in (a, \gamma) \\ \ecart
u_-(a) = u_-(\gamma) = u''_-(a) = u''_-(\gamma) = 0.
\end{array}\right.
\end{equation}
\begin{Rem}\label{Rem tilde F-}
Since $\tilde{F}_-$ is a classical solution of \eqref{pb F-}, as in \refR{Rem F+-}, from \refL{lemtrace}, it follows that, for $j = 0,1,2,3$ and $s = a$, $\gamma$ or $b$
$$\tilde{F}_-^{(j)}(s) \in (D(M),X)_{3-j + \frac{1}{p},p}.$$
\end{Rem}

\begin{Th}\label{Th syst trans M}
Let $f_- \in L^p(a,\gamma;X)$ and $f_+ \in L^p(\gamma,b;X)$, with $p \in (1,+\infty)$. Assume that $(H_1)$, $(H_2)$, $(H_3)$ and $(H_4)$ hold. Then, problem (P) has a unique classical solution if and only if the data $\varphi_1^+$, $\varphi_1^-$, $\varphi^+_2$, $\varphi^-_2$ satisfy \eqref{reg phi +-} and system  
\begin{equation}\label{syst trans M}
\left\{\begin{array}{cclll}
\left(P_1^+ - 2M Q_1^- \right) M \psi_1 &-& \left(P_2^+ + 2M Q_2^-\right) \psi_2 &=& S_3 \\ \ecart
\left(P_3^+ + 2M Q_3^- \right) \psi_1 &+& \left(2M Q_1^- - P_1^+ \right) \psi_2 &=& S_4,
\end{array}\right.
\end{equation} 
has a unique solution $(\psi_1,\psi_2)$ satisfying \eqref{reg (psi1,psi2)}.
\end{Th}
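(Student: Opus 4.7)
The plan is to imitate the strategy of \refT{Th syst trans}, adapting the representation formula on the left side of the interface to the degenerate case $r_-=0$. By \refR{Rem P+ P-}, a classical solution $u$ of (P) exists and is unique if and only if there exist $\psi_1,\psi_2\in X$ such that $u_+$ is a classical solution of $(P_+)$, $u_-$ is a classical solution of $(P_-)$, and the transmission conditions $(TC2')$ hold. By \refL{lemtrace}, the first two conditions are equivalent, separately, to the regularity \eqref{reg phi +-} of $\varphi_1^\pm,\varphi_2^\pm$ together with the regularity \eqref{reg (psi1,psi2)} of $(\psi_1,\psi_2)$; the remaining condition $(TC2')$ must then be translated into the system \eqref{syst trans M}.

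On the right side, since $r_+$ is handled exactly as in the first case, I would invoke the representation formula of \cite{LMMT} to write the unique classical solution $u_+$ of $(P_+)$ in terms of $\psi_1,\psi_2,\varphi_1^+,\varphi_2^+$ and of the particular solution $F_+$ of \eqref{pb F+}. Using \eqref{L2-M2} and \eqref{L-M=} to compute the traces $k_+u_+''(\gamma)+k_+Au_+(\gamma)$ and $k_+u_+^{(3)}(\gamma)+k_+Au_+'(\gamma)-l_+u_+'(\gamma)$ gives the operators $P_1^+,P_2^+,P_3^+$ of \eqref{Pi+}, the $\tilde\varphi_i^+$-terms appearing in \eqref{S3}--\eqref{S4}, and the contribution $k_+F_+'''(\gamma)-k_+M^2F_+'(\gamma)-l_+F_+'(\gamma)$ of $R_2$.

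On the left side, with $r_-=0$, the characteristic equation becomes $(\chi^2+A)^2=0$ with double root pair $\pm M$, so the fundamental system is $\{e^{xM},e^{-xM},xe^{xM},xe^{-xM}\}$. Using the second branch of \refL{lem U+- V+-} to invert $U_-$ and $V_-$, one obtains a representation of $u_-$ solving $(P_-)$ in terms of $\psi_1,\psi_2,\varphi_1^-,\varphi_2^-$ and of the particular solution $\tilde F_-$ of \eqref{pb F- M}. Computing $k_-u_-''(\gamma)+k_-Au_-(\gamma)=k_-(u_-''(\gamma)-M^2u_-(\gamma))$ and $k_-u_-^{(3)}(\gamma)+k_-Au_-'(\gamma)$ produces, thanks to the extra factor $2M$ coming from the differentiation of the $xe^{\pm xM}$ terms, exactly the operators $Q_1^-,Q_2^-,Q_3^-$ of \eqref{Qi-}, the $\tilde\varphi_i^-$-terms of \eqref{S3}--\eqref{S4}, and the remaining piece $-k_-\tilde F_-'''(\gamma)+k_-M^2\tilde F_-'(\gamma)$ of $R_2$. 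Substituting into $(TC2')$ and collecting the coefficients of $M\psi_1$ and $\psi_2$ in the first equation, and of $\psi_1$ and $\psi_2$ in the second (after multiplying by $M^{-1}$, which is bounded by statement~$6$ of \refR{Remconsequences}), yields \eqref{syst trans M}.

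The main obstacle is the algebraic bookkeeping in this degenerate situation: the formulas of the first case feature a $1/r_-$ factor which is singular, and the sum $L_-+M$ collapses to $2M$, so one cannot simply pass to the limit $r_-\to 0$ but must redo the computation from the double-root representation, carefully tracking the terms $xe^{\pm xM}$ that produce the $2cMe^{cM}$ summand of $U_-,V_-$ and the $2M$-prefactor of $Q_i^-$ in \eqref{syst trans M}. Once this is done, the equivalence between the classical solvability of (P) and the solvability of \eqref{syst trans M} with $(\psi_1,\psi_2)$ in the space \eqref{reg (psi1,psi2)} follows exactly as in \refT{Th syst trans}: the regularity \eqref{reg phi +-} of the data is forced by \refL{lemtrace} applied at the endpoints $a$ and $b$, while the regularity \eqref{reg (psi1,psi2)} at $\gamma$ is precisely what makes the representation formulas of $u_\pm$ yield classical solutions of $(P_\pm)$.
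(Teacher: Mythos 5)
Your proposal follows essentially the same route as the paper: reduce (P) to $(P_+)$, $(P_-)$ and $(TC2')$ via \refR{Rem P+ P-}, use the explicit representation formulas (from \cite{LMMT} on the $r_+\neq 0$ side and from \cite{thorel}, \cite{thorel 2} on the degenerate $r_-=0$ side, whose double-root structure indeed produces the $2cMe^{cM}$ terms in $U_-,V_-$ and the $2M$ prefactors of the $Q_i^-$), compute the traces at $\gamma$, and collect coefficients to obtain \eqref{syst trans M}. The only imprecision is attributing the equivalence between classical solvability of $(P_\pm)$ and the interpolation-space regularity of the data to \refL{lemtrace}, which gives only the necessity direction; the paper instead invokes the solvability theorems of \cite{LMMT} (Theorem 2.5) and \cite{thorel} (Theorem 2.8), though your closing remark that the representation formulas furnish the converse shows you have the right idea.
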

\begin{proof}
We follow the same steps than the proof of Theorem 4.6, p. 2945 in \cite{LLMT}, we only point out the key points. From \cite{LMMT}, Theorem 2.5, statement 2, there exists a unique classical solution $u_+$ of $(P_+)$ if and only if 
\begin{equation}\label{reg phi/psi i P+}
\varphi_1^+,\psi_1 \in (D(A),X)_{1+\frac{1}{2p},p} \quad  \text{and} \quad \varphi_2^+,\psi_2 \in (D(A),X)_{1+\frac{1}{2}+\frac{1}{2p},p}.
\end{equation}
Recall that, from \refR{Rem Réitération}, we have 
\begin{equation}\label{égalités espaces interpolations}
(D(A),X)_{1+\frac{1}{2p},p} = (D(M),X)_{3+\frac{1}{p},p} \quad \text{and} \quad (D(A),X)_{1+\frac{1}{2}+\frac{1}{2p},p} = (D(M),X)_{2+\frac{1}{p},p}.
\end{equation}
This solution is explicitly given in \cite{LLMT}, Proposition 2, p. 2943-2944, from which we deduce that
$$k_+\left(u''_+(\gamma) - M^2 u_+(\gamma)\right) = l_+ \left(I - e^{dL_+}\right) \alpha_2^+ + l_+ \left(I + e^{dL_+}\right) \alpha_4^+,$$
and
$$\begin{array}{lll}
k_+\left(u^{(3)}_+(\gamma) - M^2 u'_+(\gamma)\right) - l_+ u'_+(\gamma) & = & \dis - l_+ M\left(I + e^{dM}\right) \alpha_1^+ - l_+ M\left(I - e^{dM}\right) \alpha_3^+ \\ \ecart
&& \dis + k_+ F'''_+(\gamma) -  k_+ M^2 F'_+(\gamma) - l_+ F'_+(\gamma),
\end{array}$$
where
\begin{equation}\label{ali+} 
\left\{\begin{array}{rcr}
\alpha_1^+ & = & \dis \frac{1}{2r_+}(L_+ + M)U_+^{-1}\left[L_+(I+e^{dL_+})\psi_1 - (I-e^{dL_+})\psi_2 + \tilde{\varphi}_1^+\right] \\ \ecart

\alpha_2^+ & =& \dis -\frac{1}{2r_+}(L_+ + M)U_+^{-1}\left[M(I+e^{dM})\psi_1 - (I-e^{dM})\psi_2 + \tilde{\varphi}_2^+\right] \\ \ecart

\alpha_3^+& = & \dis \frac{1}{2r_+}(L_+ + M)V_+^{-1}\left[L_+(I-e^{dL_+})\psi_1 - (I+e^{dL_+}) \psi_2 + \tilde{\varphi}_3^+\right] \\ \ecart

\alpha_4^+ & = & \dis -\frac{1}{2r_+}(L_+ + M)V_+^{-1}\left[M(I-e^{dM})\psi_1 - (I+e^{dM}) \psi_2 + \tilde{\varphi}_4^+\right],
\end{array}\right.
\end{equation}
with
\begin{equation} 
\label{phi i tilde +} \left\{\begin{array}{rcr}
\tilde{\varphi_1}^+ & = & - L_+ \left(I+e^{dL_+}\right)\varphi_1^+ + \left(I-e^{dL_+}\right)\left(F'_+(b) + F'_+(\gamma) - \varphi_2^+\right) \\ \ecart

\tilde{\varphi_2}^+ & = & -M \left(I+e^{dM}\right)\varphi_1^+ + \left(I-e^{dM}\right)\left(F'_+(b) + F'_+(\gamma) - \varphi_2^+\right) \\ \ecart

\tilde{\varphi_3}^+ & = & L_+ \left(I-e^{dL_+}\right)\varphi_1^+ - \left(I+e^{dL_+}\right) \left(F'_+(b) - F'_+(\gamma) - \varphi_2^+ \right) \\ \ecart

\tilde{\varphi_4}^+ & = & M \left(I-e^{dM}\right)\varphi_1^+ - \left(I+e^{dM}\right) \left(F'_+(b) - F'_+(\gamma) - \varphi_2^+ \right),
\end{array}\right.
\end{equation}
and $F_+$ is the unique classical solution of problem \eqref{pb F+}.

In the same way, from \cite{thorel}, Theorem 2.8, statement 2, p. 637, there exists a unique classical solution $u_-$ of problem $(P_-)$ if and only if 
\begin{equation}\label{reg phi/psi i P-}
\varphi_1^-,\psi_1 \in (D(A),X)_{1+\frac{1}{2p},p} \quad  \text{and} \quad \varphi_2^-,\psi_2 \in (D(A),X)_{1+\frac{1}{2}+\frac{1}{2p},p}.
\end{equation}
Note that from \eqref{égalités espaces interpolations}, we have
\begin{equation*}
\varphi_1^-,\psi_1 \in (D(M),X)_{3+\frac{1}{p},p} \quad \text{and} \quad \varphi_2^-,\psi_2 \in (D(M),X)_{2+\frac{1}{p},p}.
\end{equation*}
Moreover, this solution, given in \cite{thorel}, Proposition 4.1, p. 640, is explicitly written in \cite{thorel 2}, Proposition 4.2, from which it follows that
$$k_-\left(u''_-(\gamma) - M^2 u_-(\gamma)\right) = -k_-\left(2M\left(I - e^{c M} \right) \alpha_2^- - 2M\left(I + e^{c M} \right) \alpha_4^-\right),$$
and
$$\begin{array}{lllll}
\dis k_-\left(u^{(3)}_-(\gamma) - M^2 u'_-(\gamma)\right) &=& \dis k_- \left(2M^2\left(I + e^{c M}\right) \alpha_2^- - 2 M^2 \left(I - e^{c M}\right) \alpha_4^-\right) \\ \ecart
&& \dis + k_- F^{(3)}_-(\gamma) - k_- M^2 F'_-(\gamma),
\end{array}$$
where
\begin{equation}\label{ali- M}
\left \{ \begin{array}{rcr}
\alpha_1^- & := & \dis -\frac{1}{2} U_-^{-1} \left[\left(I + \left( I + c M \right) e^{cM} \right)\psi_1 - ce^{cM} \psi_2 + \tilde{\varphi}_1^- \right] \\ \ecart

\alpha_2^- & := & \dis \frac{1}{2} U_-^{-1} \left[ \left(I + e^{cM}\right) M \psi_1 + \left(I - e^{cM}\right) \psi_2 + \tilde{\varphi}_2^- \right] \\ \ecart

\alpha_3^- & := & \dis \frac{1}{2} V_-^{-1} \left[\left(I - \left( I + c M \right) e^{cM} \right) \psi_1 + c e^{cM} \psi_2 + \tilde{\varphi}_3^- \right] \\ \ecart

\alpha_4^- & := & \dis - \frac{1}{2} V_-^{-1} \left[ \left(I - e^{cM}\right) M \psi_1 + \left(I + e^{cM}\right) \psi_2 + \tilde{\varphi}_4^-\right],
\end{array}\right.
\end{equation}
with
\begin{equation} \label{phi i tilde - M}
\left\{\begin{array}{rcr}
\tilde{\varphi_1}^- & := & \dis - \left(I + e^{cM}\right) \varphi_1^- - c e^{cM} \left( M \varphi_1^- + \varphi_2^- - \tilde{F}_-'(a) - \tilde{F}_-'(\gamma)\right) \\ \\

\tilde{\varphi_2}^- & := & \dis - M \left( I + e^{cM}\right) \varphi_1^- + \left( I - e^{cM}\right) \left( \varphi_2^- - \tilde{F}_-'(a) - \tilde{F}_-'(\gamma)\right) \\ \\

\tilde{\varphi_3}^- & := & \dis \left( I - e^{cM}\right) \varphi_1^- - c e^{cM} \left(M \varphi_1^- + \varphi_2^- - \tilde{F}_-'(a) + \tilde{F}_-'(\gamma)\right) \\ \\
 
\tilde{\varphi_4}^- & := & \dis M \left( I - e^{cM}\right) \varphi_1^- - \left( I + e^{cM}\right)\left(\varphi_2^- - \tilde{F}_-'(a) + \tilde{F}_-'(\gamma)\right).
\end{array}\right.
\end{equation}
Note that due to \eqref{reg phi/psi i P+}, \eqref{égalités espaces interpolations}, \eqref{ali+} and \eqref{phi i tilde +}, respectively to \eqref{égalités espaces interpolations}, \eqref{reg phi/psi i P-}, \eqref{ali- M} and \eqref{phi i tilde - M}, we deduce that
$$\alpha^\pm_i \in D(M), \quad \text{for}~ i=1,2,3,4 \quad \text{and} \quad \alpha_2^-,\alpha_4^- \in D(M^2).$$
Thus, system $(TC2')$, given by \eqref{TC2'}, writes
\begin{equation*} 
\left\{\hspace{-0.215cm}\begin{array}{rcl}
-2 k_- M \left(\hspace{-0.05cm}\left(I - e^{c M} \right) \alpha_2^- - \left(I + e^{c M} \right) \alpha_4^-\right) &\hspace{-0.25cm} = &\hspace{-0.25cm} l_+ \left(\hspace{-0.05cm}\left(I - e^{dL_+}\right) \alpha_2^+ + \left(I + e^{dL_+}\right) \alpha_4^+ \right) \\ \\

2 k_- M^2 \left(\hspace{-0.05cm}\left(I + e^{c M}\right) \alpha_2^- - \left(I - e^{c M}\right) \alpha_4^-\right) &\hspace{-0.25cm} = &\hspace{-0.25cm} - l_+ M \left(\hspace{-0.05cm}\left(I + e^{dM}\right) \alpha_1^+ + \left(I - e^{dM}\right) \alpha_3^+\right)\hspace{-0.05cm} +\hspace{-0.05cm} R_2, 
\end{array}\right.
\end{equation*}
where $R_2$ is given by \eqref{R2}. Thus, it follows that the previous system gives
\begin{equation*} 
\left\{\begin{array}{rcl}
- 2 k_- U_-^{-1}  M \left(I - e^{c M} \right)  \left[ \left(I + e^{cM}\right) M \psi_1 + \left(I - e^{cM}\right) \psi_2 + \tilde{\varphi}_2^- \right] \\ \ecart
- 2 k_- V_-^{-1} M \left(I + e^{c M} \right) \left[ \left(I - e^{cM}\right) M \psi_1 + \left(I + e^{cM}\right) \psi_2 + \tilde{\varphi}_4^-\right] \\ \ecart
+ k_+ (L_+ + M)U_+^{-1}\left(I - e^{dL_+}\right)\left[M(I+e^{dM})\psi_1 - (I-e^{dM})\psi_2 + \tilde{\varphi}_2^+\right] \\ \ecart
+ k_+ (L_+ + M)V_+^{-1} \left(I + e^{dL_+}\right) \left[M(I-e^{dM})\psi_1 - (I+e^{dM}) \psi_2 + \tilde{\varphi}_4^+\right] & = & 0 \\ \\

2 k_- M U_-^{-1}\left(I + e^{c M}\right) \left[ \left(I + e^{cM}\right) M \psi_1 + \left(I - e^{cM}\right) \psi_2 + \tilde{\varphi}_2^- \right] \\ \ecart 
+ 2 k_- M V_-^{-1} \left(I - e^{c M}\right) \left[ \left(I - e^{cM}\right) M \psi_1 + \left(I + e^{cM}\right) \psi_2 + \tilde{\varphi}_4^-\right] \\ \ecart
+ k_+ (L_+ + M)U_+^{-1} \left(I + e^{dM}\right) \left[L_+(I+e^{dL_+})\psi_1 - (I-e^{dL_+})\psi_2 + \tilde{\varphi}_1^+\right] \\ \ecart
+ k_+ (L_+ + M)V_+^{-1}\left(I - e^{dM}\right) \left[L_+(I-e^{dL_+})\psi_1 - (I+ e^{dL_+}) \psi_2 + \tilde{\varphi}_3^+\right] & = & 2 M^{-1} R_2, 
\end{array}\right.
\end{equation*} 
Finally, using \eqref{Pi+}, \eqref{Qi-}, \eqref{S3}, \eqref{S4} and \eqref{R2}, we obtain that the previous system writes as system \eqref{syst trans M}.

Conversely, if we assume that \eqref{reg phi +-} holds and system \eqref{syst trans M} has a unique solution $(\psi_1,\psi_2)$ satisfying \eqref{reg (psi1,psi2)}, then considering $u_\pm$ the unique classical solution of $(P_\pm)$, we obtain that $u$ is the unique classical solution of (P).
\end{proof}

\subsection{Functional calculus}\label{sect funct calc}

In this section, by using functional calculus, we rewrite the operators defined in \eqref{U V +-}, \eqref{Pi+}, \eqref{Pi-} and \eqref{Qi-}, to inverse the determinant operator of system \eqref{syst trans gén} and system \eqref{syst trans M}. 

To this end, we recall some classical notations. For $\theta \in (0,\pi)$, we denote by $H(S_\theta)$ the space of holomorphic functions on $S_\theta$ (defined by \eqref{defsector}) with values in $\CC$. Moreover, we consider the following subspace of~$H(S_\theta)$:
$$
\mathcal{E}_\infty(S_\theta)\;:=\;\left\{f \in H(S_\theta) : f = O(|z|^{-s}) ~(|z| \rightarrow +\infty) \text{ for some } s > 0\right\}.
$$
In other words, $\mathcal{E}_\infty(S_\theta)$ is the space of polynomially decreasing holomorphic functions at $+\infty$. Let $T$ be an invertible sectorial operator of angle $\theta_T \in (0,\pi)$. If $f\in \mathcal{E}_\infty(S_\theta)$, with $\theta \in (\theta_T,\pi)$, then we can define, by functional calculus, $f(T) \in \L(X)$, see \cite{haase}, p. 45. 

Then, we recall a useful result from \cite{LMMT}, Lemma 5.3, p. 370.

\begin{Lem}[\cite{LMMT}]\label{Lem inv}
Let $P$ be an invertible sectorial operator in $X$ with angle $\theta$, for all $\theta \in (0, \pi)$. Let \mbox{$G \in H(S_{\theta})$}, for some $\theta \in (0, \pi)$, such that
\begin{enumerate}
\item[$(i)$] $1-G \in \mathcal{E}_\infty(S_{\theta})$,

\item[$(ii)$] $G (x)\neq 0$ for any $x \in \RR_+ \setminus \{0\}$.
\end{enumerate}
Then, $G(P) \in \L(X)$, is invertible with bounded inverse.
\end{Lem}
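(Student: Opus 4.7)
The plan is to decompose $G(P)$ as a bounded perturbation of the identity through the primary $\mathcal{E}_\infty$-functional calculus, then to build an explicit bounded inverse by applying the same machinery to the reciprocal function $1/G$ and invoking multiplicativity.

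First I would observe that since $P$ is invertible and sectorial of every angle in $(0,\pi)$, the functional calculus of \cite{haase} assigns to each $h\in\mathcal{E}_\infty(S_{\theta_1})$, $\theta_1\in(0,\pi)$, a bounded operator $h(P)\in\L(X)$, and this assignment is linear and multiplicative. Hypothesis $(i)$ gives $(1-G)(P)\in\L(X)$, hence
\[
G(P)\;:=\;I-(1-G)(P)\;\in\;\L(X).
\]

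Next I would construct the candidate inverse by checking that $H:=1/G$ satisfies $1-H\in\mathcal{E}_\infty(S_{\theta'})$ for some $\theta'\in(0,\theta)$. The decay $1-G=O(|z|^{-s})$ at infinity forces $G(z)\to 1$ as $|z|\to+\infty$ in $S_\theta$, so $|G(z)|\geq 1/2$ on $S_\theta\cap\{|z|\geq R\}$ for some $R>0$. On the ray $(0,+\infty)$ one has $G\neq 0$ by $(ii)$; since the zero set of the holomorphic, not identically vanishing function $G$ is closed and discrete in $S_\theta$ and disjoint from the axis $(0,+\infty)$, I can choose $\theta'\in(0,\theta)$ small enough that $G$ does not vanish on $S_{\theta'}$ and is uniformly bounded below in modulus on every bounded annular piece of $S_{\theta'}$. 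Then $H$ is holomorphic on $S_{\theta'}$ and
\[
1-H\;=\;\frac{G-1}{G}
\]
inherits the decay $O(|z|^{-s})$ at infinity from $G-1$, so $1-H\in\mathcal{E}_\infty(S_{\theta'})$. Consequently $H(P):=I-(1-H)(P)\in\L(X)$.

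The proof then concludes by multiplicativity: the pointwise identity $G\cdot H\equiv 1$ on $S_{\theta'}$ transfers, via the functional calculus, to
\[
G(P)\,H(P)\;=\;H(P)\,G(P)\;=\;I,
\]
giving a two-sided bounded inverse of $G(P)$.

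The main obstacle I expect is the extraction of a subsector $S_{\theta'}$ on which $G$ is nonvanishing with a controlled lower bound. The contributions from large $|z|$ and from the real axis are immediate, but one must rule out zeros of $G$ accumulating near the origin inside $S_\theta$, since nothing in $(i)$ restricts the behavior of $G$ at $0$. This is handled either by a direct isolated-zeros-and-compactness argument on each dyadic annulus $\{2^{-k}\leq|z|\leq 2^{-k+1}\}$ combined with the invertibility of $P$ (which effectively screens a neighborhood of $0$ in the functional calculus), or by a direct reference to the proof of Lemma~5.3 in \cite{LMMT}, from which this statement is imported.
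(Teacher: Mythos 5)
First, a point of reference: the paper does not prove this lemma at all --- it is imported verbatim from \cite{LMMT}, Lemma~5.3 --- so your argument can only be measured against the standard proof of that result rather than against anything in the present text. Your overall strategy (write $G(P)=I-(1-G)(P)$ in the primary calculus for the invertible sectorial operator $P$, produce $(1/G)(P)$ by the same mechanism, and conclude by multiplicativity together with $1(P)=I$) is the right one.

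There is, however, a genuine gap at the step where you claim to ``choose $\theta'\in(0,\theta)$ small enough that $G$ does not vanish on $S_{\theta'}$.'' Hypotheses $(i)$ and $(ii)$ do not prevent the zeros of $G$ from accumulating at the origin from inside every subsector (a sequence $z_k$ with $|z_k|\to 0$, $\arg z_k\to 0$ and $\arg z_k\neq 0$ is compatible with both); for such a $G$ no zero-free subsector exists, $H=1/G$ is not holomorphic on any $S_{\theta'}$, and the construction of $H(P)$ as you set it up collapses. You do flag this obstacle, but neither proposed remedy is carried out: the dyadic-annulus argument alone fails for the same reason (the admissible aperture may shrink to $0$ as $k\to\infty$), and the invertibility of $P$ is left as a parenthetical remark instead of being built into the proof. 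The repair is to let the invertibility do real work from the start: since $P$ is sectorial of every positive angle and $0\in\rho(P)$, one has $\sigma(P)\subset[\delta,+\infty)$ for some $\delta>0$, so the defining contour may be taken as $\partial\bigl(S_{\theta'}\setminus\overline{B(0,\varepsilon)}\bigr)$ with $\varepsilon<\delta$ fixed. Then only the single compact set $\overline{S_{\theta'_0}}\cap\{\varepsilon\le|z|\le R\}$ matters (with $R$ chosen so that $|1-G|<1/2$ for $|z|\ge R$); it contains finitely many zeros of $G$, none on $(0,+\infty)$ by $(ii)$, and a final shrinking of $\theta'$ avoids them all. One then runs your argument with $H=1/G$ holomorphic and bounded below in modulus on $S_{\theta'}\setminus\overline{B(0,\varepsilon)}$, using the version of the $\mathcal{E}_\infty$-calculus for functions on a sector minus a ball --- which is in fact the calculus the paper invokes for invertible sectorial operators --- so that $1-H$ again decays polynomially at infinity and $G(P)H(P)=H(P)G(P)=I$. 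Alternatively, the conclusion follows in one stroke from the spectral mapping theorem for the extended calculus, since $G$ extends continuously to the compact set $\sigma(P)\cup\{\infty\}$ of the Riemann sphere and does not vanish there.
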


Let $r \in \RR$, $r_m = \max(-r,0)$, $\delta > 0$ and $z \in \CC \setminus (-\infty, r_m]$. We set 
$$\left \{ \begin{array}{l}
u_{\delta,r} (z) = \left\{\begin{array}{ll}
\dis 1 - e^{-\delta(\sqrt{z + r} + \sqrt{z})} - \frac{1}{r} (\sqrt{z + r} + \sqrt{z})^2 \left(e^{-\delta\sqrt{z}} - e^{-\delta\sqrt{z + r} }\right), & \text{if } r \in \RR \setminus\{0\} \\ \ecart
\dis 1 - e^{-2\delta \sqrt{z}} - 2 \delta \sqrt{z} e^{-\delta \sqrt{z}}, & \text{if } r = 0
\end{array}\right. \\ \\

v_{\delta,r} (z) = \left\{\begin{array}{ll}
\dis 1 - e^{-\delta(\sqrt{z + r} + \sqrt{z})} + \frac{1}{r} (\sqrt{z + r} + \sqrt{z})^2 \left(e^{-\delta\sqrt{z}} - e^{-\delta\sqrt{z + r} }\right), & \text{if } r \in \RR \setminus\{0\} \\ \ecart
\dis 1 - e^{-2\delta \sqrt{z}} + 2 \delta \sqrt{z} e^{-\delta \sqrt{z}}, & \text{if } r = 0
\end{array}\right.
\end{array}\right.$$
and when $u_{\delta,r}(z) \neq 0$, $v_{\delta,r}(z) \neq 0$, we note
\begin{equation*}
\begin{array}{l}
f_{\delta,r,1} (z) = \left\{\hspace{-0.14cm}\begin{array}{ll}
\dis ~~\, \left(\sqrt{z+r} + \sqrt{z}\right) \sqrt{z+r} \,u^{-1}_{\delta,r} (z) \left(1 + e^{-\delta\sqrt{z}}\right)\left(1 + e^{-\delta\sqrt{z+r}}\right) \\ \ecart
+ \dis \left(\sqrt{z+r} + \sqrt{z}\right) \sqrt{z+r} \,v^{-1}_{d,r} (z) \left(1 - e^{-\delta\sqrt{z}}\right)\left(1 - e^{-\delta\sqrt{z+r}} \right), & \text{if } r \in \RR \setminus\{0\} \\ \\
\dis \left(u^{-1}_{\delta,0} (z) + v^{-1}_{\delta,0} (z)\right) \left(1 - e^{-2\delta \sqrt{z}}\right), & \text{if } r = 0,
\end{array}\right.\end{array}
\end{equation*}
\begin{equation*}
\begin{array}{l}
f_{\delta,r,2} (z) = \left\{\begin{array}{ll}
\dis - \left(\sqrt{z+r} + \sqrt{z}\right) u^{-1}_{\delta,r}(z) \left(1 + e^{-\delta\sqrt{z}}\right)\left(1 - e^{-\delta\sqrt{z+r}}\right) \\ \ecart
- \dis \left(\sqrt{z+r} + \sqrt{z}\right) v^{-1}_{\delta,r} (z)\left(1 - e^{-\delta\sqrt{z}}\right)\left(1 + e^{-\delta\sqrt{z+r}}\right), & \text{if } r \in \RR \setminus\{0\} \\ \\

\dis u^{-1}_{\delta,0} (z) \left(1 - e^{-\delta \sqrt{z}}\right)^2 +  v^{-1}_{\delta,0} (z) \left(1 + e^{-\delta \sqrt{z}}\right)^2 , & \text{if } r = 0,
\end{array}\right.\end{array}
\end{equation*}
and
\begin{equation*}
\begin{array}{l}
f_{\delta,r,3} (z) = \left\{\begin{array}{ll}
\dis - \left(\sqrt{z+r} + \sqrt{z}\right) u^{-1}_{\delta,r} (z) \left(1 - e^{-\delta\sqrt{z}}\right) \left(1 - e^{-\delta\sqrt{z+r}}\right) \\ \ecart
- \dis \left(\sqrt{z+r} + \sqrt{z}\right) v^{-1}_{\delta,r} (z) \left(1 + e^{-\delta\sqrt{z}}\right) \left(1 + e^{-\delta\sqrt{z+r}}\right), & \text{if } r \in \RR \setminus\{0\} \\ \\

\dis u^{-1}_{\delta,0} (z) \left(1 + e^{-\delta \sqrt{z}}\right)^2 +  v^{-1}_{\delta,0} (z) \left(1 - e^{-\delta \sqrt{z}}\right)^2 , & \text{if } r = 0
\end{array}\right.\end{array}
\end{equation*}
\begin{Rem}
Note that, from $(H_2)$ and $(H_3)$, if $r_\pm \neq 0$, we have
$$\left\{\begin{array}{rrr}
u_{c,r_-} (-A) = U_-, & u_{d,r_+} (-A) = U_+, & v_{c,r_-} (-A) = V_-, \\
v_{d,r_+} (-A) = V_+, & k_- f_{c,r_-,1} (-A) = P_1^-, & k_+ f_{d,r_+,1} (-A) = P_1^+, \\
k_- f_{c,r_-,2} (-A) = P_2^-, & k_+ f_{d,r_+,2} (-A) = P_2^+, & k_- f_{c,r_-,3} (-A) = P_3^-, \\
k_+ f_{d,r_+,3} (-A) = P_3^+, 
\end{array}\right.$$
and if $r_-=0$, we obtain
$$\left\{\begin{array}{rrr}
u_{c,0} (-A) = U_-, & v_{c,0} (-A) = V_-, & k_- f_{c,0,1} (-A) = Q_1^-, \\
k_- f_{c,0,2} (-A) = Q_2^-,& k_- f_{c,0,3} (-A) = Q_3^-. 
\end{array}\right.$$
\end{Rem}
\begin{Rem}\label{Rem f1 +- > 0 et f2,f3 +- < 0}
Let $\delta > 0$, $r \in \RR$ and $x \in (r_m,+\infty)$. Then, when $r=0$, we have 
$$1 - e^{-2\delta \sqrt{x}} \pm 2 \delta \sqrt{x} e^{-\delta \sqrt{x}} = 2 e^{-\delta \sqrt{x}}\left(\sinh(\delta \sqrt{x}) \pm \delta \sqrt{x}\right) > 0,$$
and from \cite{LMMT}, Lemma 5.2, p. 369, it clear that $u_{\delta,r}(x)>0$ and $v_{\delta,r}(x) > 0$. Thus, when $r\neq 0$, we deduce that
$$f_{\delta,r,1}(x) > 0 \quad \text{and} \quad f_{\delta,r,2}(x),f_{\delta,r,3}(x) < 0,$$
and when $r=0$, we obtain
$$f_{\delta,0,1}(x), f_{\delta,0,2}(x), f_{\delta,0,3}(x) > 0.$$
\end{Rem}
Moreover, for $z \in \CC \setminus (-\infty, r_m]$ and $r \in \RR\setminus\{0\}$, we define
$$\begin{array}{lll}
g_{\delta,r}(z) &\hspace{-0.1cm} = & \hspace{-0.1cm}\dis -\sqrt{z + r} \left(\left(1 - e^{-2\delta(\sqrt{z + r} + \sqrt{z})}\right)^2 - \frac{1}{r^2} (\sqrt{z + r} + \sqrt{z})^4 \left(e^{-2\delta\sqrt{z}} - e^{-2\delta\sqrt{z + r}}\right)^2 \right) \\ \ecart

&&\hspace{-0.1cm} \dis + \sqrt{z} \left(\left(1 - e^{-\delta(\sqrt{z + r} + \sqrt{z})}\right)^2 + \frac{1}{r} (\sqrt{z + r} + \sqrt{z})^2 \left(e^{-\delta\sqrt{z}} - e^{-\delta\sqrt{z + r}}\right)^2 \right)^2,
\end{array}$$
and for $r=0$, we set
$$g_{\delta,0} (z) = \left(1 + \sqrt{z}\right) \left(1 - e^{-2\delta\sqrt{z}}\right)^4 + 4 \left(1 - e^{-2\delta \sqrt{z}}\right)^2 e^{-2\delta\sqrt{z}} - 16 \,\delta^2 z \,e^{-4\delta\sqrt{z}}$$
\begin{Lem} \label{Lem g < 0}
Let $\delta > 0$ and $x \in (r_m,+\infty)$. Thus, if $r \in \RR\setminus\{0\}$, then $g_{\delta,r}(x) < 0$ and if $r=0$, then $g_{\delta,0}(x) > 0$. 
\end{Lem}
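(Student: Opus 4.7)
The statement splits cleanly into the exceptional case $r=0$ and the main case $r\neq 0$; I would treat them separately.

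For $r=0$, set $t:=\delta\sqrt{x}>0$. The identity $1-e^{-2t}=2e^{-t}\sinh t$ lets me rewrite each of the three summands of $g_{\delta,0}(x)$ as a multiple of $e^{-4t}$; after collecting terms I obtain
\[
g_{\delta,0}(x)\;=\;16\,e^{-4t}\Bigl[(1+\sqrt{x})\sinh^{4}t+\sinh^{2}t-t^{2}\Bigr].
\]
The bracket is strictly positive because $\sinh t>t$ for $t>0$ forces $\sinh^{2}t>t^{2}$, and $(1+\sqrt{x})\sinh^{4}t\geq 0$; hence $g_{\delta,0}(x)>0$.

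For $r\neq 0$, I would set $\alpha:=\sqrt{x+r}$, $\beta:=\sqrt{x}$, $\lambda:=\delta(\alpha+\beta)>0$, and $\mu:=\delta(\alpha-\beta)$, so that $\mu\neq 0$, $|\mu|<\lambda$, $\alpha\pm\beta=(\lambda\pm\mu)/(2\delta)$, and the crucial relation $(\alpha+\beta)^{2}/r=\lambda/\mu$ holds (since $r=(\alpha+\beta)(\alpha-\beta)$). Applying the half-angle identities $1-e^{-\lambda}=2e^{-\lambda/2}\sinh(\lambda/2)$ and $e^{-\delta\beta}-e^{-\delta\alpha}=2e^{-\lambda/2}\sinh(\mu/2)$, together with their $2\delta$-analogues, converts every exponential block of $g_{\delta,r}(x)$ into a polynomial in $\sinh$ and $\cosh$ of $\lambda,\mu$. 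After factoring out the positive prefactor $4e^{-2\lambda}/\mu^{2}$, proving $g_{\delta,r}(x)<0$ reduces to the scalar inequality
\[
\alpha\bigl[\mu^{2}\sinh^{2}\lambda-\lambda^{2}\sinh^{2}\mu\bigr]\;>\;4\beta\bigl[\mu\sinh^{2}(\lambda/2)+\lambda\sinh^{2}(\mu/2)\bigr]^{2}.
\]
Setting $P:=\cosh\lambda-1$ and $Q:=\cosh\mu-1$, and using $2\sinh^{2}(y/2)=\cosh y-1$ together with $\cosh(2y)-1=4(\cosh y-1)+2(\cosh y-1)^{2}$, a direct computation (in which $\alpha-\beta=\mu/\delta$ and $\alpha+\beta=\lambda/\delta$ eliminate the mixed $\alpha,\beta$ terms) factors the difference $\text{LHS}-\text{RHS}$ as
\[
\bigl[\mu^{2}P-\lambda^{2}Q\bigr]\cdot\frac{\mu\cosh\lambda+\lambda\cosh\mu}{\delta}.
\]

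The first factor is strictly positive because $t\mapsto(\cosh t-1)/t^{2}$ is strictly increasing on $(0,+\infty)$, as one checks by differentiating and using $t\sinh t>2(\cosh t-1)$ for $t>0$; combined with the even symmetry of $\cosh$ and $\lambda>|\mu|$, this gives $(\cosh\lambda-1)/\lambda^{2}>(\cosh|\mu|-1)/\mu^{2}$, i.e.\ $\mu^{2}P>\lambda^{2}Q$. The main obstacle is the sign of the second factor. When $r>0$, $\mu$ and $\lambda$ are both positive, so $\mu\cosh\lambda+\lambda\cosh\mu>0$ is immediate and the proof concludes. When $r<0$ we have $\mu<0$, so the factor reduces to $\lambda\cosh|\mu|-|\mu|\cosh\lambda$, and I would control this by writing $\cosh s=1+\int_{0}^{s}\sinh u\,du$ and exploiting the structural constraints $\lambda+\mu=2\delta\alpha>0$, $\lambda-\mu=2\delta\beta>0$ to re-express the difference as an integral whose positivity is visible from $\sinh>0$ on $(0,+\infty)$. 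Once both factors are shown to carry the correct sign, their product is strictly positive, which yields the scalar inequality and therefore $g_{\delta,r}(x)<0$.
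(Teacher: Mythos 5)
Your treatment of $r=0$ is correct and is essentially the paper's own argument: the paper factors $\bigl(1-e^{-2\delta\sqrt{x}}\bigr)^2-4\delta^2x\,e^{-2\delta\sqrt{x}}$ as a difference of squares and uses $\sinh t>t$, which is exactly your observation $\sinh^2t-t^2>0$ after the substitution $t=\delta\sqrt{x}$. For $r\neq0$ the paper gives no computation at all: it invokes Lemma 4.4 of \cite{LLMT}, which is proved there for $r>0$, and asserts without argument that the conclusion persists for $r<0$. Your reduction is therefore genuinely new relative to the paper, and I checked it: with $\lambda=\delta(\sqrt{x+r}+\sqrt{x})$, $\mu=\delta(\sqrt{x+r}-\sqrt{x})$, $P=\cosh\lambda-1$, $Q=\cosh\mu-1$, one indeed obtains
\begin{equation*}
g_{\delta,r}(x)\;=\;-\,\frac{4e^{-2\lambda}}{\delta\,\mu^{2}}\,\bigl(\mu^{2}P-\lambda^{2}Q\bigr)\bigl(\mu\cosh\lambda+\lambda\cosh\mu\bigr),
\end{equation*}
the factor $\mu^{2}P-\lambda^{2}Q$ is strictly positive because $t\mapsto(\cosh t-1)/t^{2}=\sum_{n\geq1}t^{2n-2}/(2n)!$ is strictly increasing and $\lambda>|\mu|>0$, and for $r>0$ the second factor is trivially positive. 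So your proof of the cases $r=0$ and $r>0$ is complete and, for $r>0$, more self-contained than the paper's.

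The gap is the sign of $\mu\cosh\lambda+\lambda\cosh\mu$ when $r<0$, and it cannot be closed: this quantity is \emph{not} always positive under the constraints $\lambda\pm\mu>0$, $\lambda>|\mu|>0$. Take $\delta=1$, $r=-1$, $x=100$: then $\mu=\sqrt{99}-10\approx-0.0501$, $\lambda\approx19.95$, and $\mu\cosh\lambda+\lambda\cosh\mu\approx-0.05\cdot\cosh(19.95)+19.97<0$, while $\lambda+\mu=2\sqrt{99}>0$ and $\lambda-\mu=20>0$. No integral representation of $\cosh$ will rescue the step, because your factorization is an equivalence: since the first factor is always positive, $g_{\delta,r}(x)<0$ holds \emph{if and only if} $\mu\cosh\lambda+\lambda\cosh\mu>0$. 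Indeed one checks directly that $g_{1,-1}(100)\approx\sqrt{100}-\sqrt{99}\approx0.05>0$ (all exponential terms are negligible there), consistent with the asymptotics $g_{\delta,r}(x)\to\sqrt{x}-\sqrt{x+r}>0$ as $x\to+\infty$ when $r<0$. So what your computation actually uncovers is that the inequality $g_{\delta,r}(x)<0$ fails for $r<0$ and $x$ large; the defect lies in the extension of \cite{LLMT}, Lemma 4.4 to negative $r$ that the paper takes for granted, not in your algebra. You correctly flagged the second factor as the main obstacle, but the proposed positivity argument for $r<0$ is the step that fails.
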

\begin{proof}
Let $\delta> 0$ and $r \in \RR\setminus\{0\}$. For all $x \in (r_m,+\infty)$, from \cite{LLMT}, Lemma 4.4, p. 2950, we obtain the result.

Now, consider that $r=0$. Then
$$\begin{array}{lll}
g_{\delta,0} (x) & = & \dis \left(1 + \sqrt{x}\right) \left(1 - e^{-2\delta\sqrt{x}}\right)^4 + 4e^{-2\delta\sqrt{x}} \left( \left(1 - e^{-2\delta \sqrt{x}}\right)^2  - 4 \,\delta^2 x \,e^{-2\delta\sqrt{x}}\right) \\ \\

& = & \dis \left(1 + \sqrt{x}\right) \left(1 - e^{-2\delta\sqrt{x}}\right)^4 \\ \ecart
&& \dis + 4e^{-2\delta\sqrt{x}} \left( 1 - e^{-2\delta \sqrt{x}}  - 2 \,\delta \sqrt{x} \,e^{-\delta\sqrt{x}}\right)\left(1 - e^{-2\delta \sqrt{x}} + 2 \,\delta \sqrt{x}\, e^{-\delta\sqrt{x}}\right). 
\end{array}$$
Hence, since $\delta,x>0$, we have
$$\begin{array}{lll}
\dis 1 - e^{-2\delta \sqrt{x}}  - 2 \,\delta \sqrt{x} \,e^{-\delta\sqrt{x}} & = & \dis e^{-\delta\sqrt{x}}\left(e^{\delta\sqrt{x}} - e^{-\delta\sqrt{x}} -  2 \,\delta \sqrt{x}\right) \\ \ecart 
& = & \dis 2 e^{-\delta\sqrt{x}} \left(\sinh(\delta\sqrt{x}) - \delta \sqrt{x}\right) > 0.
\end{array}$$
Finally, we deduce that $g_{\delta,0} > 0$.
\end{proof}

\section{Proof of the main results}

In both cases, assume that problem (P) has a unique classical solution; thus, from \refT{Th syst trans}, respectively \refT{Th syst trans M}, \eqref{reg phi +-} holds. Conversely, assume that \eqref{reg phi +-} holds, then due to \refT{Th syst trans}, respectively \refT{Th syst trans M}, we have to prove that system \eqref{syst trans gén}, respectively system \eqref{syst trans M}, has a unique solution such that \eqref{reg (psi1,psi2)} holds. 

The proof is divided in three parts for both cases. First, we will make explicit, in the first case, the determinant of system \eqref{syst trans gén} and in the second case, the determinant of system \eqref{syst trans M}. Then, in the two cases, we will show the uniqueness of the solution. To this end, we will invert the determinant thanks to functional calculus. Finally, we will prove, in all cases, that $\psi_1$ and $\psi_2$ have the expected regularity. 

\subsection{Calculus of the determinant}

\subsubsection{First case}

Here, we consider $r_+,r_- \in \RR \setminus\{0\}$. We have to make explicit the determinant of system \eqref{syst trans gén} that we recall here
$$\left\{\begin{array}{cclll}
\left(P_1^- - P_1^+ \right) M \psi_1 &+& \left(P_2^+ + P_2^-\right) \psi_2 &=& S_1 \\ \ecart
\left(P_3^+ + P_3^- \right) \psi_1 &+& \left(P_1^- - P_1^+ \right) \psi_2 &=& S_2.
\end{array}\right.$$
We write the previous system as a matrix equation $\Lambda_1 \Psi = S$, where
$$\Lambda_1 = \left(\begin{array}{cc}
\left(P_1^- - P_1^+ \right) M & \left(P_2^+ + P_2^-\right) \\
\left(P_3^+ + P_3^- \right) & \left(P_1^- - P_1^+ \right)
\end{array}\right), \quad \Psi = \left(\begin{array}{c}
\psi_1 \\
\psi_2 
\end{array}\right) \quad \text{and} \quad S = \left(\begin{array}{c}
S_1 \\
S_2
\end{array}\right).$$
To solve system \eqref{syst trans gén}, we will study the determinant
$$\det(\Lambda_1) := M\left(P_1^- - P_1^+ \right)^2 - \left(P_2^+ + P_2^- \right)\left(P_3^+ + P_3^- \right),$$
of the matrix $\Lambda_1$. Thus, we set
\begin{equation} \label{det lambda 1}
\det(\Lambda_1) = D_1^+ + D_1^- + D_2,
\end{equation}
where
$$\left\{\begin{array}{lll}
D_1^+ & = & \dis M \left(P_1^+\right)^2 - P_3^+ P_2^+ \\ \ecart
D_1^- & = & \dis M \left(P_1^-\right)^2 - P_3^- P_2^- \\ \ecart
D_2 & = & \dis - P_3^+ P_2^- - P_3^- P_2^+ - 2 M P_1^+ P_1^-.
\end{array}\right.$$
Then, we recall the result of \cite{LLMT} (Lemma 5.1, p. 2953), describing the determinant.
\begin{Lem}[\cite{LLMT}]\label{Lem D1+ D1-}
We have
\begin{enumerate}
\item $D_1^+ = -4 k_+^2 (L_+ + M)^2 U_+^{-2} V_+^{-2} D^+$, with 
$$\begin{array}{rcl}
D^+ & = & \dis L_+ \left(\left(I - e^{2d(L_+ + M)}\right)^2 - \frac{1}{r_+^2} (L_+ + M)^4 \left(e^{2dM} - e^{2dL_+}\right)^2\right) \\ \ecart

&& \dis - M \left(\left(I - e^{d(L_+ + M)}\right)^2 + \frac{1}{r_+^2} (L_+ + M)^2 \left(e^{dM} - e^{dL_+}\right)^2\right)^2.
\end{array}$$

\item $D_1^- = -4 k_-^2 (L_- + M)^2 U_-^{-2} V_-^{-2} D^-$, with 
$$\begin{array}{rcl}
D^- & = & \dis L_- \left(\left(I - e^{2c(L_- + M)}\right)^2 - \frac{1}{r_-^2} (L_- + M)^4 \left(e^{2cM} - e^{2cL_-}\right)^2\right) \\ \ecart

&& \dis - M \left(\left(I - e^{c(L_- + M)}\right)^2 + \frac{1}{r_-^2} (L_- + M)^2 \left(e^{cM} - e^{cL_-}\right)^2\right)^2.
\end{array}$$
\end{enumerate}
\end{Lem}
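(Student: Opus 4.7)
The lemma is a purely algebraic identity. By $(H_2)$--$(H_4)$, every operator appearing in $P_j^+$ lies in the bounded holomorphic functional calculus of $A$, hence $L_+$, $M$, $e^{dM}$, $e^{dL_+}$, $U_+^{\pm 1}$ and $V_+^{\pm 1}$ all commute pairwise; the calculation can therefore be carried out as in the commutative scalar setting. The derivation for $D_1^-$ is strictly parallel after the substitutions $(L_+,r_+,d)\mapsto(L_-,r_-,c)$, so I focus on $D_1^+$.

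To streamline bookkeeping, set $N:=L_++M$ and introduce
$$H_\pm:=(I\pm e^{dM})(I\mp e^{dL_+}),\qquad G_\pm:=(I\pm e^{dM})(I\pm e^{dL_+}).$$
The definitions \eqref{Pi+} then read $P_1^+=k_+N(U_+^{-1}H_++V_+^{-1}H_-)$, $P_2^+=k_+N(U_+^{-1}G_-+V_+^{-1}G_+)$ and $P_3^+=k_+NL_+(U_+^{-1}G_++V_+^{-1}G_-)$. Expanding $M(P_1^+)^2$ and $P_3^+P_2^+$ term by term, pulling the common $U_+^{-2}V_+^{-2}$ to the left, and observing that $H_+H_-=G_+G_-=(I-e^{2dM})(I-e^{2dL_+})$, one obtains
$$M(P_1^+)^2-P_3^+P_2^+=k_+^2 N^2 U_+^{-2}V_+^{-2}\Bigl[M(V_+H_++U_+H_-)^2-L_+\bigl(H_+H_-(U_+^2+V_+^2)+U_+V_+(G_+^2+G_-^2)\bigr)\Bigr].$$

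The two remaining brackets are then reduced using the elementary identities
$$H_++H_-=2(I-e^{dN}),\; H_+-H_-=2(e^{dM}-e^{dL_+}),\; G_++G_-=2(I+e^{dN}),\; G_+-G_-=2(e^{dM}+e^{dL_+}),$$
together with the direct consequences of \eqref{U V +-}
$$U_++V_+=2(I-e^{dN}),\; V_+-U_+=\tfrac{2}{r_+}N^2(e^{dM}-e^{dL_+}),\; U_+V_+=(I-e^{dN})^2-\tfrac{1}{r_+^2}N^4(e^{dM}-e^{dL_+})^2.$$
A short computation rewrites $V_+H_++U_+H_-$ as twice an expression in $(I-e^{dN})$ and $(e^{dM}-e^{dL_+})$, while $H_+H_-(U_+^2+V_+^2)+U_+V_+(G_+^2+G_-^2)$ collapses to $4\bigl[(I-e^{dN})^2(I+e^{dN})^2-\tfrac{1}{r_+^2}N^4(e^{dM}-e^{dL_+})^2(e^{dM}+e^{dL_+})^2\bigr]$. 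The factorizations $(I-e^{dN})(I+e^{dN})=I-e^{2dN}$ and $(e^{dM}-e^{dL_+})(e^{dM}+e^{dL_+})=e^{2dM}-e^{2dL_+}$ then identify the two brackets with those in $D^+$, and the prefactor $-4$ surfaces with the correct sign.

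The only genuine difficulty is bookkeeping: eight products of the form $(I\pm e^{dM})(I\pm e^{dL_+})$ must be grouped so that the terms weighted by $M$ and by $L_+$ decouple cleanly, and some care is needed with signs when collecting the $-4$ prefactor. The corresponding statement for $D_1^-$ follows \emph{mutatis mutandis}.
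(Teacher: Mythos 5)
Your computation is correct and follows essentially the same route as the paper: although the paper only cites \cite{LLMT} for this lemma, the identical expansion (computing $P_2^+P_3^+$ and $M(P_1^+)^2$ separately, then reducing via $U_++V_+=2(I-e^{d(L_++M)})$, $V_+-U_+=\tfrac{2}{r_+}(L_++M)^2(e^{dM}-e^{dL_+})$ and the difference-of-squares factorizations) is carried out verbatim in the proof of \refL{Lem D3+ D3-}, part 1, for $D_3^+=-D_1^+$. One remark: your (correct) reduction of $V_+H_++U_+H_-$ yields the coefficient $\tfrac{1}{r_+}$, not $\tfrac{1}{r_+^2}$, in front of $(L_++M)^2\left(e^{dM}-e^{dL_+}\right)^2$ inside the squared bracket of $D^+$, which agrees with the paper's own $D''_+$ and with the function $g_{\delta,r}$ of \refs{sect funct calc}, so the exponent printed in the statement of the lemma appears to be a typo rather than a flaw in your argument.
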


\subsubsection{Second case}

Here, we consider $r_+ \in \RR \setminus\{0\}$ and $r_- = 0$. As previously, we make explicit the determinant of system \eqref{syst trans M} that we recall here
$$\left\{\begin{array}{cclll}
\left(P_1^+ - 2M Q_1^- \right) M \psi_1 &-& \left(P_2^+ + 2M Q_2^-\right) \psi_2 &=& S_3 \\ \ecart
\left(P_3^+ + 2M Q_3^- \right) \psi_1 &+& \left(2M Q_1^- - P_1^+ \right) \psi_2 &=& S_4,
\end{array}\right.$$
We write this system as a matrix equation $\Lambda_2 \Psi = \tilde{S}$, where
$$\Lambda_2 = \left(\begin{array}{cc}
\left(P_1^+ - 2M Q_1^- \right) M & -\left(P_2^+ + 2M Q_2^-\right) \\
\left(P_3^+ + 2M Q_3^- \right) & \left(2M Q_1^- - P_1^+ \right)
\end{array}\right), \quad \Psi = \left(\begin{array}{c}
\psi_1 \\
\psi_2 
\end{array}\right) \quad \text{and} \quad \tilde{S} = \left(\begin{array}{c}
S_3 \\
S_4
\end{array}\right).$$
To solve system \eqref{syst trans gén}, we will study the determinant
$$\det(\Lambda_2) := - M \left(P_1^+ - 2M Q_1^- \right)^2 + \left(P_3^+ + 2M Q_3^- \right) \left(P_2^+ + 2M Q_2^-\right),$$
of the matrix $\Lambda_2$. We set
\begin{equation} \label{det lambda 2}
\det(\Lambda_2) = D_3^+ + D_3^- + D_4,
\end{equation}
where
$$D_3^+ = P_2^+ P_3^+ - M \left(P_1^+\right)^2 \quad \text{and} \quad  D_3^- = 4 M^2 \left(Q_2^-Q_3^- - M \left(Q_1^-\right)^2\right),$$
with 
$$D_4 = 2M\left(P_3^+ Q_2^- + P_2^+ Q_3^- + M P_1^+ Q_1^-\right).$$
\begin{Lem}\label{Lem D3+ D3-}
We have
\begin{enumerate}
\item $D_3^+ = k_+^2 (L_+ + M)^2 U_+^{-2} V_+^{-2} D_0^+$, with 
$$\begin{array}{rcl}
D_0^+ & = & \dis L_+ \left(\left(I - e^{2d(L_+ + M)}\right)^2 - \frac{1}{r_+^2} (L_+ + M)^4 \left(e^{2dM} - e^{2dL_+}\right)^2\right) \\ \ecart

&& \dis - M \left(\left(I - e^{d(L_+ + M)}\right)^2 + \frac{1}{r_+^2} (L_+ + M)^2 \left(e^{dM} - e^{dL_+}\right)^2\right)^2.
\end{array}$$

\item $D_3^- =  16 k_-^2 M^2 U_-^{-2} V_-^{-2} D_0^-$, with 
$$D_0^- = \left(I-M\right) \left(I - e^{2cM}\right)^4 + 4 \left(I - e^{2c M}\right)^2 e^{2cM} - 16 c^2 M^2 e^{4cM}.$$
\end{enumerate}
\end{Lem}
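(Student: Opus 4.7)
The plan is to handle the two parts separately. For part (1), I would first observe that by the definitions stated just above the lemma, $D_3^+ = P_2^+P_3^+ - M(P_1^+)^2 = -D_1^+$, where $D_1^+$ is the quantity appearing in statement~(1) of Lemma~\ref{Lem D1+ D1-}. Thus the expression for $D_3^+$ reduces, up to the indicated constant factor, to the formula already established for $D^+$ in that earlier lemma. No new computation with $P_1^+, P_2^+, P_3^+$ is needed here; one simply quotes the previous result.

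For part (2), the approach is a direct expansion of
$$D_3^- \;=\; 4M^2\bigl(Q_2^- Q_3^- - M(Q_1^-)^2\bigr)$$
using the definitions \eqref{Qi-}. Since every operator in sight is a function of $M$, all commute, so $Q_2^-Q_3^-$ expands into four bounded terms. The two \emph{cross} terms each contribute $U_-^{-1}V_-^{-1}$ multiplied by $(I-e^{cM})^2(I+e^{cM})^2 = (I-e^{2cM})^2$, while the two \emph{diagonal} terms produce $U_-^{-2}(I-e^{2cM})^2$ and $V_-^{-2}(I-e^{2cM})^2$. Expanding $M(Q_1^-)^2$ similarly and subtracting, I would group the $U_-^{-2}, V_-^{-2}$ and $2U_-^{-1}V_-^{-1}$ coefficients into the perfect square $(U_-^{-1}+V_-^{-1})^2$, which produces an overall factor $(I-M)(I-e^{2cM})^2(U_-^{-1}+V_-^{-1})^2$, plus a residual cross-term piece obtained from the scalar identity
$$(1-x)^4+(1+x)^4 \;=\; 2(1-x^2)^2 + 16x^2$$
applied to $x=e^{cM}$, namely a term $16\,e^{2cM}U_-^{-1}V_-^{-1}$.

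The next step exploits the two identities
$$U_- + V_- \;=\; 2(I-e^{2cM}) \quad\text{and}\quad U_-V_- \;=\; (I-e^{2cM})^2 - 4c^2M^2 e^{2cM},$$
which follow at once from the $r_-=0$ branch of \eqref{U V +-}. The first gives $(U_-^{-1}+V_-^{-1})^2 = 4(I-e^{2cM})^2(U_-V_-)^{-2}$, so that
$$Q_2^-Q_3^- - M(Q_1^-)^2 \;=\; \frac{4k_-^2}{(U_-V_-)^2}\Bigl[(I-M)(I-e^{2cM})^4 + 4\,e^{2cM}U_-V_-\Bigr].$$
Substituting the expression for $U_-V_-$ into the bracket reproduces exactly $D_0^-$, and multiplying through by $4M^2$ yields the claimed formula $D_3^- = 16k_-^2M^2U_-^{-2}V_-^{-2}D_0^-$.

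The main obstacle is purely bookkeeping: the expansion must be arranged so that the square $(U_-^{-1}+V_-^{-1})^2$ appears at the right moment, and the scalar identity for $(1-x)^4+(1+x)^4$ must be recognized as the natural device to separate the ``even'' part into a $(1-x^2)^2$ piece (absorbed by the $(I-M)(I-e^{2cM})^2$ block) and a $16e^{2cM}$ piece (which, after multiplication by $U_-V_-$, produces the term $-16c^2M^2e^{4cM}$ inside $D_0^-$). There are no functional-calculus subtleties, since commutativity of all operators involved is immediate.
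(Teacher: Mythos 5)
Your proposal is correct in substance. For part (1) you take a genuinely shorter route than the paper: since the $P_i^+$ are commuting functions of $A$, one indeed has $D_3^+ = P_2^+P_3^+ - M(P_1^+)^2 = -D_1^+$, and the claim follows by quoting \refL{Lem D1+ D1-}; the paper instead recomputes $P_2^+P_3^+$ and $M(P_1^+)^2$ from scratch via the identities for $U_++V_+$ and $V_+-U_+$. Your shortcut even exposes a small inconsistency in the paper's constants: quoting \refL{Lem D1+ D1-} yields $D_3^+ = 4k_+^2(L_++M)^2U_+^{-2}V_+^{-2}D^+$, whereas the statement of \refL{Lem D3+ D3-} displays the prefactor $k_+^2$ without the $4$ (the paper's own proof produces $L_+D'_+-MD''_+ = 4D_0^+$, and the factor $4$ reappears in $g_3^+$ in the functional-calculus section, so this is a typo in the statement and harmless for the subsequent sign analysis). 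For part (2) your computation is essentially the paper's, merely organized around the perfect square $(U_-^{-1}+V_-^{-1})^2$ instead of clearing denominators first; the identities $U_-+V_-=2(I-e^{2cM})$, $U_-V_-=(I-e^{2cM})^2-4c^2M^2e^{2cM}$ and $(1-x)^4+(1+x)^4=2(1-x^2)^2+16x^2$ are exactly the ones the paper uses, and your final bracket reproduces $D_0^-$ correctly. One small slip in the write-up: in the expansion of $Q_2^-Q_3^-$ the two \emph{cross} terms are $U_-^{-1}V_-^{-1}(I-e^{cM})^4$ and $U_-^{-1}V_-^{-1}(I+e^{cM})^4$, while it is the two \emph{diagonal} terms that each carry the factor $(I-e^{cM})^2(I+e^{cM})^2=(I-e^{2cM})^2$; since your subsequent use of the quartic identity together with the residual $16\,e^{2cM}U_-^{-1}V_-^{-1}$ accounts for the correct total, this is only a labelling issue, not a gap.
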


\begin{proof}\hfill

\begin{enumerate}
\item We have 
$$P_2^+ P_3^+ = k_+^2 (L_+ + M)^2 L_+ U^{-2}_+ V^{-2}_+ D'_+,$$ 
where
$$\begin{array}{rcl}
D'_+ & = & \dis \left(U^2_+ + V^2_+\right) (I - e^{2dM})(I - e^{2dL_+}) \\ \ecart

&& + U_+ V_+ \left((I + e^{dM})^2(I + e^{dL_+})^2 + (I - e^{dM})^2(I - e^{dL_+})^2\right) \\ \\

& = & \dis \left(U_+^2 + V_+^2 \right) \left[\left(I + e^{d(L_+ + M)}\right)^2 - \left(e^{dM} + e^{dL_+}\right)^2\right] \\ \ecart
&& \dis + 2 U_+ V_+ \left[\left(I + e^{d(L_+ + M)}\right)^2 + \left(e^{dM} + e^{dL_+}\right)^2\right] \\ \\

& = & \dis \left(U_+ + V_+ \right)^2 \left(I + e^{d(L_+ + M)}\right)^2 - \left(V_+ - U_+\right)^2 \left(e^{dM} + e^{dL_+}\right)^2.
\end{array}$$
Moreover, from \eqref{U V +-}, we obtain that
\begin{equation} \label{U+V et U-V}
U_+ + V_+ = 2\left(I - e^{d(L_+ + M)}\right) \quad \text{and} \quad V_+ - U_+ = \frac{2}{r_+} (L_+ + M)^2 \left(e^{dM} - e^{dL_+}\right).
\end{equation}
Then
$$\begin{array}{rcl}
D'_+ & = & \dis 4\left(I - e^{d(L_+ + M)}\right)^2 \left(I + e^{d(L_+ + M)}\right)^2 \\ \ecart
&& \dis - \frac{4}{r_+^2} (L_+ + M)^4 \left(e^{dM} - e^{dL_+}\right)^2 \left(e^{dM} + e^{dL_+}\right)^2 \\ \\

& = & \dis 4\left(I - e^{2d(L_+ + M)}\right)^2 - \frac{4}{r_+^2} (L_+ + M)^4 \left(e^{2dM} - e^{2dL_+}\right)^2.
\end{array}$$
Furthermore, we have 
$$M \left(P_1^+\right)^2 = k_+^2 (L_+ + M)^2 M U_+^{-2} V_+^{-2} D''_+,$$
where
$$\begin{array}{rcl}
D''_+ & = & \dis \left(V_+ (I + e^{dM})(I - e^{dL_+}) +  U_+ (I - e^{dM})(I+e^{dL_+}) \right)^2 \\ \ecart

& = & \dis \left[\left(U_+ + V_+\right)\left(I - e^{d(L_+ + M)}\right) + \left(V_+ - U_+\right)\left(e^{dM} - e^{dL_+}\right)\right]^2, 
\end{array}$$ 
and due to \eqref{U+V et U-V}, it follows that
$$D''_+ = \left[2 \left(I - e^{d(L_+ + M)}\right)^2 + \frac{2}{r_+} (L_+ + M)^2 \left(e^{dM} - e^{dL_+}\right)^2\right]^2.$$
Finally, we deduce that
$$\begin{array}{rcl}
D_3^+ & = & \dis P_2^+ P_3^+ - M \left(P_1^+\right)^2 \\ \ecart

& = & \dis k_+^2 (L_+ + M)^2 U^{-2}_+ V^{-2}_+ \left(L_+ D'_+ - M D''_+\right),
\end{array}$$
and setting $D_0^+ = L_+ D'_+ - M D''_+$, we obtain the expected result.

\item We have
$$Q_2^-Q_3^- = k_-^2 U_-^{-2}V_-^{-2}D'_-,$$
where
$$\begin{array}{rcl}
D'_- & = & \dis \left(V_-\left(I - e^{cM}\right)^2 + U_- \left(I + e^{cM}\right)^2 \right) \left(V_- \left(I + e^{cM}\right)^2 + U_- \left(I - e^{cM}\right)^2\right) \\ \ecart

& = & \dis \left(U_-^2 + V_-^2 \right) \left(I - e^{2cM}\right)^2 + 2 U_- V_- \left(I - e^{2cM}\right)^2 + 16 U_- V_- e^{2cM}, 
\end{array}$$
and
$$\begin{array}{rcl}
M \left(Q_1^-\right)^2 & = & \dis k_-^2 M U_-^{-2} V_-^{-2} \left(U_- + V_- \right)^2  \left(I - e^{2cM}\right)^2 \\ \ecart

& = & \dis k_-^2 U_-^{-2} V_-^{-2} \left[M\left(U_-^2 + V_-^2 \right)  \left(I - e^{2cM}\right)^2 + 2 M U_- V_- \left(I - e^{2cM}\right)^2\right].
\end{array}$$
Thus
$$Q_2^-Q_3^- - M \left(Q_1^-\right)^2 = k_-^2 U_-^{-2} V_-^{-2} D''_-,$$
where 
$$\begin{array}{rcl}
D''_- & = & \dis \left(I-M\right) \left(U_-^2 + V_-^2 \right)  \left(I - e^{2cM}\right)^2 + 2 \left(I-M\right) U_- V_- \left(I - e^{2cM}\right)^2 \\ \ecart
&& \dis + 16 U_- V_- e^{2cM} \\ \\

& = & \dis \left(I-M\right) \left(U_- + V_- \right)^2  \left(I - e^{2cM}\right)^2 + 16 U_- V_- e^{2cM}.
\end{array}$$
Moreover, from \eqref{U V +-}, we obtain that
$$U_- + V_- = 2 \left(I - e^{2c M}\right) \quad \text{and} \quad U_-V_- = \left(I - e^{2c M}\right)^2 - 4 c^2 M^2 e^{2cM}.$$
Then
$$D''_- = 4 \left(I-M\right) \left(I - e^{2cM}\right)^4 + 16 \left(I - e^{2c M}\right)^2 e^{2cM} - 64 c^2 M^2 e^{4cM}.$$
Therefore, it follows that
$$\begin{array}{rcl}
D_3^- & = & \dis 4 M^2 \left(Q_2^-Q_3^- - M \left(Q_1^-\right)^2\right) \\ \ecart

& = & \dis 16 k_-^2 M^2 U_-^{-2} V_-^{-2} D_0^-,
\end{array}$$
where $D_0^- = \dis\frac{1}{4}\,D''_-$.
\end{enumerate}
\end{proof}

\subsection{Inversion of the determinant}

\subsubsection{First case}

Here, we consider $r_+,r_- \in \RR \setminus\{0\}$. Let $r = \max(-r_+,-r_-,0) \geqslant 0$. By using functional calculus, we prove that the determinant of system \eqref{syst trans gén}, given by \eqref{det lambda 1}, is invertible with bounded inverse. Due to \refL{Lem D1+ D1-} and the definition of $D_2$, we obtain:
$$D_1^+ = g_1^+(-A), \quad D_1^- = g_1^-(-A) \quad \text{and} \quad D_2 = g_2(-A),$$ 
where, for $z \in \CC \setminus \RR_-$, we have set
$$\left\{ \begin{array}{lcl}
g_1^+ (z) & = & \dis 4 k_+^2 (\sqrt{z + r_+} + \sqrt{z})^2 u_{d,r_+}^{-2}(z) v_{d,r_+}^{-2}(z) g_{d,r_+}(z) \\ \ecart

g_1^- (z) & = & \dis 4 k_-^2 (\sqrt{z + r_-} + \sqrt{z})^2 u_{c,r_-}^{-2}(z) v_{c,r_-}^{-2}(z) g_{c,r_-}(z) \\ \ecart

g_2(z) & = & \dis k_+ f_{d,r_+,1} (z) k_- f_{c,r_-,3}(z) + k_- f_{c,r_-,1}(z) k_+ f_{d,r_+,3}(z)\vspace{0.1cm}\\
&&\dis - 2 \sqrt{z}\, k_+ f_{d,r_+,2}(z) k_- f_{c,r_-,2}(z),
\end{array}\right.$$
with $u_{\delta,r}$, $v_{\delta,r}$, $g_{\delta,r}$ and $f_{\delta,r,i}$ the complex functions defined in section~\ref{sect funct calc}. Thus
\begin{equation} \label{f(-A)}
\det(\Lambda_1) = D_1^+ + D_1^- + D_2 = f_1(-A),
\end{equation}
with $f_1 = g_1^+ + g_1^- + g_2$. Note that, for some $\theta \in (0,\pi)$, we have $f_1 \in H(S_\theta)$ and due to \refR{Rem f1 +- > 0 et f2,f3 +- < 0} and \refL{Lem g < 0}, for $x> 0$, we have 
\begin{equation} \label{f < 0}
f_1(x)= g_1^+ (x) + g_1^- (x) + g_2 (x) < 0.
\end{equation}
Let $C_1,C_2$ be two linear operators in $X$. We denote by $C_1 \sim C_2$ the equality $C_1 = C_2 + \Sigma$, where $\Sigma$ is a finite sum of terms of type $k L_+^l L_-^m M^n e^{\alpha L_+} e^{\beta L_-} e^{\delta M}$, where $k \in \RR$; $l,m,n \in \NN$; $\alpha ,\beta ,\delta \in \RR_+$ with $\alpha + \beta + \delta \neq 0$. Note that $\Sigma$ is a regular term in the sense: 
$$\Sigma \in \L(X) \quad\text{with}\quad \Sigma(X) \subset D(M^\infty):= \bigcap_{k\geqslant0}D(M^k).$$
Since we have $U_\pm \sim I, V_\pm \sim I$, then by setting $W=U_- U_+ V_- V_+ \sim I$, we deduce that
$$\left\{\begin{array}{lcllcl}
W P_1^+ & \sim & 2k_+ (L_+ + M), & W P_1^- & \sim & 2k_- (L_- + M) \\ \ecart
 
W P_2^+ & \sim & 2k_+ (L_+ + M), & W P_2^- & \sim & 2k_- (L_- + M) \\ \ecart
 
W P_3^+ & \sim & 2k_+ (L_+ + M) L_+, & W P_3^- & \sim & 2k_- (L_- + M) L_-.
\end{array}\right.$$
Thus
$$\begin{array}{rcl}
W^2 \det(\Lambda_1) & = & \dis M \left(W P_1^+\right)^2 -  \left(W P_2^+ W P_3^+\right) + M \left(W P_1^-\right)^2 - \left(W P_2^- W P_3^-\right) \\ \ecart
&& \dis - \left(W P_2^- W P_3^+ + W P_2^+ W P_3^- + 2 M W P_1^+ W P_1^- \right) \\ \\

& \sim & \dis -4 k_+^2 (L_+ + M)^2 (L_+ - M) - 4 k_-^2 (L_- + M)^2 (L_- - M) \\ \ecart
&& \dis - 4 k_+ k_- (L_+ + M) (L_- + M) (L_+ + L_- + 2M).
\end{array}$$
From \eqref{L-M=}, we have
$$\begin{array}{rcl}
-W^2 \det(\Lambda_1) & \sim & \dis 4 k_+^2 r_+ (L_+ + M) + 4 k_-^2 r_- (L_- + M) \\ \ecart
&& \dis + 4 k_+ k_- (L_+ + M) (L_- + M) (L_+ + L_- + 2M) \\ \\

& \sim & \dis 4 k_+ l_+ (L_+ + M) + 4 k_- l_- (L_- + M) \\ \ecart
&& \dis + 4 k_+ k_- (L_+ + M) (L_- + M) (L_+ + L_- + 2M)
\end{array}$$
Hence, we note
$$B_1 = 4 k_+ l_+ (L_+ + M) + 4 k_- l_- (L_- + M) + 4 k_+k_- (L_+ + M)(L_- + M)(L_+ + L_- + 2M).$$
Thus, we obtain 
\begin{equation}\label{det lambda =  W-2 BF}
\det(\Lambda_1) = -W^{-2} \left(B_1 + \sum_{j \in J} k_j L_+^{l_j} L_-^{m_j} M^{n_j} e^{\alpha_j L_+} e^{\beta_j L_-} e^{\delta_j M}\right),
\end{equation}
where $J$ is a finite set and for any $j \in J$:
$$k_j \in \RR;~l_j, m_j, n_j \in \NN,~ \alpha_j, \beta_j, \delta_j \in \RR_+ \quad \text{with} \quad \alpha_j + \beta_j + \delta_j \neq 0.$$
We set 
$$B_2 = I + \frac{l_+}{k_-} (L_- + M)^{-1} (L_+ + L_- + 2M)^{-1} +  \frac{l_-}{k_+} (L_+ + M)^{-1} (L_+ + L_- + 2M)^{-1}$$
such that 
$$B_1 = 4 k_+k_- (L_+ + M)(L_- + M)(L_+ + L_- + 2M)B_2.$$

\begin{Prop}\label{Prop B1 inv}
Assume that $(H_1)$, $(H_2)$, $(H_3)$, $(H_4)$ hold and $k_+k_- >0$. If one of the following assumptions holds
\begin{itemize}
\item $\dis\frac{l_+}{k_-} > 0$ and $\dis\frac{l_-}{k_+} > 0$,

\item $\dis\frac{l_+}{k_-} < 0$ and $\dis\frac{l_-}{k_+} < 0$, such that 
\begin{equation}\label{hyp - -}
(l_+ - l_-) (k_+ - k_-) \geqslant 0,
\end{equation}

\item $\dis\frac{l_+}{k_-} > 0$ and $\dis\frac{l_-}{k_+} < 0$, such that 
\begin{equation}\label{hyp + -}
-6 l_-k_+ + l_+k_+ + l_-k_- \geqslant 0,
\end{equation}

\item $\dis\frac{l_+}{k_-} < 0$ and $\dis\frac{l_-}{k_+} > 0$, such that 
\begin{equation}\label{hyp - +}
-6 l_+k_- + l_+k_+ + l_-k_- \geqslant 0,
\end{equation}
\end{itemize}
then, $b_2(x) > 0$, for $x>r \geqslant 0$ and operator $B_1$, defined above, is invertible with bounded inverse.
\end{Prop}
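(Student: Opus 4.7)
The plan is to deduce the invertibility of $B_1$ from that of $B_2$, then to invert $B_2$ via the functional calculus of \refL{Lem inv}. The prefactor $4k_+k_-(L_++M)(L_-+M)(L_++L_-+2M)$ of $B_2$ in $B_1$ is invertible with bounded inverse: $L_\pm+M$ and $L_++L_-+2M$ are invertible by statement~6 of \refR{Remconsequences} (together with the Dore--Venni theorem applied to the BIP operators $-(L_++M)$ and $-(L_-+M)$), and $k_+k_-\neq 0$. So it is enough to prove that $B_2$ is invertible.

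Setting $r=\max(-r_+,-r_-,0)$, I would introduce, for $z\in\CC\setminus(-\infty,r]$,
$$b_2(z) \;=\; 1 + \frac{l_+/k_-}{(\sqrt{z+r_-}+\sqrt z)(\sqrt{z+r_+}+\sqrt{z+r_-}+2\sqrt z)} + \frac{l_-/k_+}{(\sqrt{z+r_+}+\sqrt z)(\sqrt{z+r_+}+\sqrt{z+r_-}+2\sqrt z)},$$
so that $B_2=b_2(-A)$ by the definitions~\eqref{defLM} and the functional calculus. The function $b_2$ is holomorphic on a sector $S_\theta$ containing $\sigma(-A)\subset(r,+\infty)$ and satisfies $1-b_2(z)=O(|z|^{-1})$ at infinity, so $1-b_2\in\mathcal{E}_\infty(S_\theta)$. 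By \refL{Lem inv}, it then suffices to verify that $b_2(x)>0$ for every $x>r$.

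To check positivity I would split into the four sign configurations of $(l_+/k_-,l_-/k_+)$. The case $(+,+)$ is immediate since both fractions in $b_2$ are positive, so $b_2(x)>1$. In each of the other three cases I would reduce $b_2(x)$ to a common denominator
$$D(x) = (\sqrt{x+r_+}+\sqrt x)(\sqrt{x+r_-}+\sqrt x)(\sqrt{x+r_+}+\sqrt{x+r_-}+2\sqrt x) > 0,$$
and study the sign of the numerator
$$N(x) = D(x) + \tfrac{l_+}{k_-}(\sqrt{x+r_+}+\sqrt x) + \tfrac{l_-}{k_+}(\sqrt{x+r_-}+\sqrt x).$$
The identities $\sqrt{x+r_\pm}-\sqrt x = r_\pm/(\sqrt{x+r_\pm}+\sqrt x)$ allow one to regroup the terms in $N(x)$ so that the combinations $l_+k_++l_-k_-$ and $l_\mp k_\pm$ appearing in the hypotheses emerge naturally, and the monotonicity of $x\mapsto \sqrt{x+r_\pm}+\sqrt x$ reduces the global estimate to its behaviour at the endpoints of $(r,+\infty)$.

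The main obstacle is the analysis of the three sign-mixed cases, where one or both correction terms pull $b_2$ below $1$ and one must check that the conditions $(l_+-l_-)(k_+-k_-)\geq 0$ or $-6l_\mp k_\pm+l_+k_++l_-k_-\geq 0$ are strong enough to keep $N(x)$ non-negative uniformly in $x>r$. The limiting regime $x\to r^+$, in which one square root degenerates and the negative correction is largest, is the most delicate and is exactly where the given inequalities become tight; the regime $x\to+\infty$ is automatic since $b_2(x)\to 1$. Cases~(3) and~(4) of the proposition are symmetric under exchange of the $+$ and $-$ indices, so only two genuinely different estimates need to be carried out.
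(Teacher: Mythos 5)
Your overall strategy coincides with the paper's: factor $B_1 = 4k_+k_-(L_++M)(L_-+M)(L_++L_-+2M)\,B_2$, observe that the prefactor is boundedly invertible, represent $B_2=b_2(-A)$, and apply \refL{Lem inv} after checking that $1-b_2\in\mathcal{E}_\infty$ and that $b_2>0$ on $(r,+\infty)$. Up to that point what you write is correct (the paper additionally performs the translation $\tilde{b_2}(z)=b_2(z+r)$, $P=-A-rI$, so that \refL{Lem inv} applies on $\RR_+\setminus\{0\}$; you gloss over this, but it is routine). The genuine gap is that the entire substance of the proposition lies in the positivity of $b_2$ under the four algebraic conditions, and there you have only restated the task: you announce that the identities $\sqrt{x+r_\pm}-\sqrt{x}=r_\pm/(\sqrt{x+r_\pm}+\sqrt{x})$ ``allow one to regroup the terms'' and that ``one must check that the conditions are strong enough,'' without carrying out either step. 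The specific constants in the hypotheses --- the factor $6$ in \eqref{hyp + -} and \eqref{hyp - +}, and the passage from $(l_+-l_-)(k_+-k_-)\geqslant 0$ to the inequality actually needed at $x=r$ --- come from concrete, non-obvious estimates in the paper: the comparison $l_+k_++l_-k_--2l_-k_+\geqslant l_+k_++l_-k_--l_+k_--l_-k_+$ (resp.\ with $2l_+k_-$) according to which of $-r_-$, $-r_+$ realizes $r$, and the chain $b_3(y)>\frac{1}{2\sqrt{y+r}}\bigl(\frac{l_+}{k_-}+\frac{l_-}{k_+}\bigr)$ combined with $\sqrt{y+r+r_+}+\sqrt{y}+2\sqrt{y+r}>3\sqrt{y+r}$, which produces the denominator $6(y+r)$. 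None of this is derivable from your outline as written.

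Moreover, the one concrete mechanism you do propose --- reducing the global estimate to the endpoint $x\to r^+$ via ``the monotonicity of $x\mapsto\sqrt{x+r_\pm}+\sqrt{x}$'' --- does not go through in the mixed-sign cases. There your numerator $N(x)$ is a combination of increasing functions with coefficients of opposite signs, so neither $N$ nor $b_2$ is monotone in any evident way, and the infimum need not sit at the endpoint a priori. The paper uses a monotonicity argument only in the case $l_+/k_-<0$, $l_-/k_+<0$, where both correction terms have the same sign and one shows directly that $b_2'>0$; in the mixed cases it replaces monotonicity by the pointwise lower bound $b_2(y+r)>1+\frac{1}{6(y+r)}\bigl(\frac{l_+}{k_-}+\frac{l_-}{k_+}\bigr)$, whose right-hand side is then minimized at $y=0$. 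You would need to supply an argument of this kind; as it stands the decisive step of the proof is missing.
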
 
\begin{Rem}\label{Rem équivalences}
Since $k_+k_- > 0$, then we have the following equivalences
$$\frac{l_+}{k_-} > 0 \Longleftrightarrow r_+ > 0 \quad \text{and} \quad \frac{l_-}{k_+} > 0 \Longleftrightarrow r_- > 0.$$
\end{Rem}
\begin{proof}
From $(H_2)$ and $(H_3)$, since $k_+k_- \neq 0$, it is clear that 
$$0 \in \rho\left(4 k_+k_- (L_+ + M)(L_- + M)(L_+ + L_- + 2M)\right).$$
Thus, it remains to prove that $0 \in \rho(B_2)$. To this end, we use \refL{Lem inv}.

Let $z \in \CC \setminus (-\infty, r]$. We set
\begin{equation}\label{b2}
\begin{array}{lll}
b_2(z) & = & \dis 1 + \frac{l_+}{k_-} \frac{1}{(\sqrt{z+r_-} + \sqrt{z})(\sqrt{z+r_+} + \sqrt{z+r_-} + 2\sqrt{z})} \\ \ecart
&& \dis +  \frac{l_-}{k_+} \frac{1}{(\sqrt{z+r_+} + \sqrt{z})(\sqrt{z+r_+} + \sqrt{z+r_-} + 2\sqrt{z})},
\end{array}
\end{equation}
hence $b_2(-A) = B_2$. Then, for all $x > r\geqslant 0$, it follows
$$\begin{array}{lll}
b_2(x) & = & \dis 1 + \frac{l_+}{k_-} \frac{1}{(\sqrt{x+r_-} + \sqrt{x})(\sqrt{x+r_+} + \sqrt{x+r_-} + 2\sqrt{x})} \\ \ecart
&& \dis +  \frac{l_-}{k_+} \frac{1}{(\sqrt{x+r_+} + \sqrt{x})(\sqrt{x+r_+} + \sqrt{x+r_-} + 2\sqrt{x})}
\end{array}$$
Our aim is to prove that $b_2(x) > 0$, for all $x > r$. To this end, we set
$$y = x - r > 0,$$
hence
$$\begin{array}{lll}
b_2(y+r) & = &  \dis 1 + \frac{l_+}{k_-} \frac{1}{(\sqrt{y+r+r_-} + \sqrt{y+r})(\sqrt{y+r+r_+} + \sqrt{y+r+r_-} + 2\sqrt{y+r})} \\ \ecart
&& \dis +  \frac{l_-}{k_+} \frac{1}{(\sqrt{y+r+r_+} + \sqrt{y+r})(\sqrt{y+r+r_+} + \sqrt{y+r+r_-} + 2\sqrt{y+r})} \\ \\

& = & \dis 1 + \frac{1}{(\sqrt{y+r+r_+} + \sqrt{y+r+r_-} + 2\sqrt{y+r})} b_3(y),
\end{array}$$
where
$$b_3(y) = \frac{\frac{l_+}{k_-}}{(\sqrt{y+r+r_-} + \sqrt{y+r})} + \frac{\frac{l_-}{k_+}}{(\sqrt{y+r+r_+} + \sqrt{y+r})}.$$
Then
$$\begin{array}{lll}
b_3'(y) & = & \dis \frac{-\frac{l_+}{k_-} \left(\frac{1}{2\sqrt{y+r+r_-}} + \frac{1}{2\sqrt{y+r}} \right)}{(\sqrt{y+r+r_-} + \sqrt{y+r})^2} + \frac{-\frac{l_-}{k_+}\left(\frac{1}{2\sqrt{y+r+r_+}} + \frac{1}{2\sqrt{y+r}}\right)}{(\sqrt{y+r+r_+} + \sqrt{y+r})^2}
\end{array}$$
and
$$\begin{array}{lll}
b_2'(y+r) & = & \dis \frac{-\left(\frac{1}{2\sqrt{y+r+r_+}} + \frac{1}{2\sqrt{y+r+r_-}} + \frac{1}{\sqrt{y+r}}\right)}{(\sqrt{y+r+r_+} + \sqrt{y+r+r_-} + 2\sqrt{y+r})^2} b_3(y)\\ \ecart
&&\dis + \frac{1}{(\sqrt{y+r+r_+} + \sqrt{y+r+r_-} + 2\sqrt{y+r})} b_3'(y),
\end{array}$$
Now, we have to study the following fourth cases.
\begin{enumerate}
\item If $\dis\frac{l_+}{k_-} > 0$ and $\dis\frac{l_-}{k_+} > 0$, then it is clear that $b_3 > 0$ and $b_2>0$.

\item If $\dis\frac{l_+}{k_-} < 0$ and $\dis\frac{l_-}{k_+} < 0$, then $b_3' > 0$ and $b_2'>0$. Thus $b_2 (y+r) > b_2 (r)$ where
$$b_2(r) = 1 + \frac{1}{(\sqrt{r+r_+} + 2\sqrt{r})} b_3(0) > 1 + \frac{1}{2\sqrt{r}} b_3(0),$$
with
$$b_3(0) = \frac{\frac{l_+}{k_-}}{\sqrt{r + r_-} + \sqrt{r}} + \frac{\frac{l_-}{k_+}}{\sqrt{r+r_+} + \sqrt{r}}.$$
Since $\sqrt{r+r_+} >0$ and $\sqrt{r+r_-} >0$, it follows that 
$$\frac{\frac{l_+}{k_-}}{\sqrt{r + r_-} + \sqrt{r}} > \frac{\frac{l_+}{k_-}}{\sqrt{r}}  \quad \text{and} \quad \frac{\frac{l_-}{k_+}}{\sqrt{r+r_+} + \sqrt{r}} > \frac{\frac{l_-}{k_+}}{\sqrt{r}},$$
hence
$$b_3(0) > \frac{1}{\sqrt{r}} \left(\frac{l_+}{k_-} + \frac{l_-}{k_+}\right).$$
Thus, we obtain
$$b_2(r) > 1 + \frac{1}{2r} \left(\frac{l_+}{k_-} + \frac{l_-}{k_+}\right).$$
Moreover, we have
$$1 + \frac{1}{2r} \left(\frac{l_+}{k_-} + \frac{l_-}{k_+}\right) \geqslant 0 \Longleftrightarrow 2r \geqslant - \left(\frac{l_+}{k_-} + \frac{l_-}{k_+}\right),$$
where
$$-\left(\frac{l_+}{k_-} + \frac{l_-}{k_+}\right) = \left\{\begin{array}{ll}
\dis - r_- \left(\frac{l_+}{l_-} + \frac{k_-}{k_+}\right), & \text{if } r = -r_- \\ \ecart
\dis - r_+ \left(\frac{k_+}{k_-} + \frac{l_-}{l_+}\right), & \text{if } r = -r_+.
\end{array}\right.$$
Thus, we obtain that
$$2r \geqslant - \left(\frac{l_+}{k_-} + \frac{l_-}{k_+}\right) \Longleftrightarrow \left\{\begin{array}{ll}
\dis l_+k_+ + l_-k_- - 2 l_-k_+ \geqslant 0, & \text{if } r = -r_- \\ \ecart
\dis l_+k_+ + l_-k_- - 2 l_+k_- \geqslant 0, & \text{if } r = -r_+.
\end{array}\right.$$
Furthermore, since $k_+k_->0$, if $r = -r_-$, then $\dis -\frac{l_-}{k_-} \geqslant -\frac{l_+}{k_+}$, hence $\dis -l_- k_+ \geqslant - l_+k_-$ and if $r = -r_+$, then $\dis -\frac{l_+}{k_+} \geqslant -\frac{l_-}{k_-}$, hence $\dis - l_+ k_- \geqslant -l_-k_+$. It follows that
$$\left\{\begin{array}{ll}
\dis l_+k_+ + l_-k_- - 2 l_-k_+ \geqslant l_+k_+ + l_-k_- - l_+k_- - l_-k_+, & \text{if } r = -r_- \\ \ecart
\dis l_+k_+ + l_-k_- - 2 l_+k_- \geqslant l_+k_+ + l_-k_- - l_+k_- - l_-k_+, & \text{if } r = -r_+.
\end{array}\right.$$
Finally, if \eqref{hyp - -} holds, then we obtain $b_2 > 0$.

\item If $\dis\frac{l_+}{k_-} > 0$ and $\dis\frac{l_-}{k_+} < 0$, then since $k_+k_- > 0$, we have
$$\dis\frac{l_+}{k_-} > 0 \Longleftrightarrow \frac{l_+}{k_+}\frac{k_+}{k_-}k_-^2 > 0 \Longleftrightarrow r_+ > 0 \quad  \text{and} \quad \dis\frac{l_-}{k_+} < 0 \Longleftrightarrow \frac{l_-}{k_-}\frac{k_-}{k_+}k_+^2 < 0 \Longleftrightarrow r_- < 0.$$
Thus $r=-r_- > 0$, $r_+>0$ and
$$b_3(y) = \frac{\frac{l_+}{k_-}}{(\sqrt{y} + \sqrt{y+r})} + \frac{\frac{l_-}{k_+}}{(\sqrt{y+r+r_+} + \sqrt{y+r})}.$$
Since $\dis\frac{l_+}{k_-} > 0$ and $\sqrt{y} < \sqrt{y+r}$ it follows that
$$\frac{\frac{l_+}{k_-}}{\sqrt{y} + \sqrt{y+r}} > \frac{\frac{l_+}{k_-}}{2\sqrt{y+r}}.$$
In the same way, since $\dis\frac{l_-}{k_+} < 0$ and $\sqrt{y+r+r_+} > \sqrt{y+r}$, we deduce that
$$\frac{\frac{l_-}{k_+}}{\sqrt{y+r+r_+} + \sqrt{y+r}} > \frac{\frac{l_-}{k_+}}{2\sqrt{y+r}},$$
hence
$$b_3(y) > \frac{1}{2 \sqrt{y+r}} \left(\frac{l_+}{k_-} + \frac{l_-}{k_+}\right).$$
If $\dis\frac{l_+}{k_-} + \frac{l_-}{k_+} > 0$, then $b_3 > 0$ and $b_2 > 0$. If $\dis\frac{l_+}{k_-} + \frac{l_-}{k_+} < 0$, then we have
$$\begin{array}{lll}
b_2(y+r) & = & \dis 1 + \frac{1}{(\sqrt{y+r+r_+} + \sqrt{y} + 2\sqrt{y+r})} b_3(y) \\ \ecart
& > & \dis 1 + \frac{1}{3 \sqrt{y+r}} b_3(y) \\ \ecart
& > & \dis 1 + \frac{1}{6(y+r)}  \left(\frac{l_+}{k_-} + \frac{l_-}{k_+}\right).
\end{array}$$
Moreover, we have
$$1 + \frac{1}{6(y+r)}  \left(\frac{l_+}{k_-} + \frac{l_-}{k_+}\right) \geqslant 0 \Longleftrightarrow 6(y+r) + \frac{l_+}{k_-} + \frac{l_-}{k_+} \geqslant 0.$$
It is obvious that 
$$6(y+r) + \left(\frac{l_+}{k_-} + \frac{l_-}{k_+}\right) \geqslant 6r + \frac{l_+}{k_-} + \frac{l_-}{k_+},$$
thus, since $k_+k_- > 0$ and here $\dis r = -r_- = -\frac{l_-}{k_-}$, we deduce that the previous inequality becomes
$$-6\frac{l_-}{k_-} + \frac{l_+}{k_-} + \frac{l_-}{k_+} \geqslant 0 \Longleftrightarrow -6 l_-k_+ + l_+k_+ + l_-k_- \geqslant 0.$$
Finally, since $k_+k_- > 0$, if \eqref{hyp + -} holds, then $b_2 > 0$.

\item If $\dis\frac{l_+}{k_-} < 0$ and $\dis\frac{l_-}{k_+} > 0$, then here $r=-r_+$ and in the same way than previously, if \eqref{hyp - +} holds, then $b_2 > 0$.
\end{enumerate}
Since $r = \max(-r_+,-r_-,0) \geqslant 0$ and due to $(H_2)$ and $(H_3)$, we deduce that operator $-A - rI \in$ BIP$(X,\theta_A)$ with $0\in \rho(-A-rI)$.  Thus, considering $\tilde{b_2}(z) = b_2(z+r)$, with $z+r \in \CC \setminus \RR_-$, it follows that $\tilde{b_2}(-A - rI) = B_2$. Moreover, for a given $\theta \in (0,\pi)$, it is clear that $1-b_2, 1-\tilde{b_2} \in \mathcal{E}_\infty$. Finally, applying \refL{Lem inv} with $G = \tilde{b_2}$ and $P=-A-rI$, we deduce the result.
\end{proof}
Due to \eqref{det lambda =  W-2 BF} and \refP{Prop B1 inv}, it follows that 
\begin{equation}\label{det}
\det(\Lambda_1) = -W^{-2} B_1 F_1,
\end{equation}
where
\begin{equation}\label{F1}
F_1 = I + \sum_{j \in J} k_j B_1^{-1} L_+^{l_j} L_-^{m_j} M^{n_j} e^{\alpha_j L_+} e^{\beta_j L_-} e^{\delta_j M}.
\end{equation}
For $z \in \CC \setminus (-\infty,r]$, we set
\begin{equation}\label{b1}
b_1(z) = -4 k_+k_- (\sqrt{z + r_+} + \sqrt{z})(\sqrt{z+r_-} + \sqrt{z})(\sqrt{z+r_+} + \sqrt{z+r_-} + 2\sqrt{z})b_2(z),
\end{equation}
where $b_2$ is given by \eqref{b2} and
$$\tilde{f_1}(z) = 1 + \sum_{j \in J} k_j b_1(z)^{-1} \left(-\sqrt{z + r_+}\right)^{l_j} \left(-\sqrt{z + r_-}\right)^{m_j} \left(-\sqrt{z}\right)^{n_j} e^{-\alpha_j \sqrt{z + r_+}} e^{-\beta_j \sqrt{z + r_-}} e^{-\delta_j \sqrt{z}}.$$
Then, due to $(H_2)$ and $(H_3)$, we have $B_1 = b_1(-A)$ and $F_1 = \tilde{f_1}(-A)$. Moreover, from \eqref{f(-A)} and \eqref{det}, we obtain
$$f_1(-A) = \det(\Lambda_1) = -W^{-2} B_1 \tilde{f_1}(-A).$$
Note that, we have
\begin{equation}\label{f(z)}
f_1(z) = - u_{d,r_+}^{-2}(z)v_{d,r_+}^{-2}(z)u_{c,r_-}^{-2}(z)v_{c,r_-}^{-2}(z)b_1(z) \tilde{f_1}(z).
\end{equation}
\begin{Prop}\label{prop inv F}
Assume that $(H_1)$, $(H_2)$, $(H_3)$, $(H_4)$ hold and $k_+k_- >0$. Thus
\begin{itemize}
\item if $r_+ > 0$ and $r_- > 0$,

\item if $r_+ < 0$ and $r_- < 0$, such that \eqref{hyp - -} holds,

\item if $r_+ > 0$ and $r_- < 0$, such that 
\eqref{hyp + -} holds,

\item if $r_+ < 0$ and $r_- > 0$, such that 
\eqref{hyp - +} holds,
\end{itemize}
then, $F_1 \in \L(X)$, given by \eqref{F1}, is invertible with bounded inverse.
\end{Prop}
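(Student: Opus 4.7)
The strategy is to apply \refL{Lem inv} to a scalar symbol associated with $F_1$, following the same template as in the proof of \refP{Prop B1 inv}. More precisely, set $r = \max(-r_+, -r_-, 0) \geqslant 0$, shift to the operator $-A - rI$, which by $(H_2)$ and $(H_3)$ belongs to BIP$(X, \theta_A)$ with $0 \in \rho(-A-rI)$, and work with the symbol $\widetilde{G}(z) := \tilde{f_1}(z+r)$ so that $\widetilde{G}(-A-rI) = F_1$. The three hypotheses of \refL{Lem inv} must then be verified: holomorphy of $\widetilde{G}$ on some sector $S_\theta$, membership $1 - \widetilde{G} \in \mathcal{E}_\infty(S_\theta)$, and non-vanishing of $\widetilde{G}$ on $(0,+\infty)$.

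Holomorphy is automatic: each square root $\sqrt{z+r_\pm}, \sqrt{z}$ is well-defined on $\CC \setminus (-\infty, r_m]$, the function $b_1$ is non-vanishing there by combining \refP{Prop B1 inv} (which yields $b_2 \neq 0$ on a neighborhood of the shifted sector) with the fact that $k_+ k_- > 0$ and the square root factors do not vanish. The $\mathcal{E}_\infty$-condition follows from the explicit form of $\tilde{f_1} - 1$: each term of the finite sum contains at least one exponential factor $e^{-\alpha_j \sqrt{z+r_+}} e^{-\beta_j \sqrt{z+r_-}} e^{-\delta_j \sqrt{z}}$ with $\alpha_j + \beta_j + \delta_j > 0$. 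In any subsector $S_{\theta}$ with $\theta < \pi$, the quantities $\sqrt{z+r_\pm}$ lie in $S_{\theta/2}$, hence have positive real part bounded below by $c|z|^{1/2}$ for some $c>0$, while $b_1(z)^{-1}$ is controlled by $|z|^{-3/2}$ and the polynomial factors $(\sqrt{z+r_\pm})^{l_j}, (\sqrt{z})^{n_j}$ grow only polynomially. Thus each term is $O(|z|^{-s})$ for every $s>0$ as $|z| \to +\infty$.

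The decisive step is the non-vanishing of $\widetilde{G}$ on $(0,+\infty)$, i.e.\ of $\tilde{f_1}$ on $(r,+\infty)$. Here is where the case hypotheses in the statement enter, exactly through \refP{Prop B1 inv}. Relation \eqref{f(z)} reads
$$\tilde{f_1}(x) = -\frac{f_1(x)}{u_{d,r_+}^{2}(x)\,v_{d,r_+}^{2}(x)\,u_{c,r_-}^{2}(x)\,v_{c,r_-}^{2}(x)\,b_1(x)^{-1}\cdot(\ldots)},$$
but more simply, for $x > r$: by \eqref{f < 0} we have $f_1(x) < 0$; by \refR{Rem f1 +- > 0 et f2,f3 +- < 0} the functions $u_{d,r_+}, v_{d,r_+}, u_{c,r_-}, v_{c,r_-}$ are strictly positive; and from \eqref{b1}, since $k_+ k_- > 0$, all square-root factors are positive and $b_2(x) > 0$ under the present hypotheses (thanks to \refP{Prop B1 inv}), we obtain $b_1(x) < 0$. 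Substituting into \eqref{f(z)} gives $\tilde{f_1}(x) > 0$ on $(r,+\infty)$.

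With the three conditions verified, \refL{Lem inv} applied to $G = \widetilde{G}$ and $P = -A - rI$ yields that $F_1 = \widetilde{G}(-A-rI) \in \L(X)$ is invertible with bounded inverse. The main obstacle of this proof is really the sign analysis of $b_2$ (already done in \refP{Prop B1 inv}); once this is granted, the present statement reduces to a routine, albeit tedious, check that the perturbative correction in \eqref{F1} is small at infinity in the functional-calculus sense.
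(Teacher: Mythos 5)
Your proof follows the paper's argument essentially verbatim: the same shift to $P=-A-rI\in\text{BIP}(X,\theta_A)$, the same verification that $1-\tilde{f_1}\in\mathcal{E}_\infty(S_\theta)$ (you actually supply more detail on the decay at infinity than the paper does), and the same chain $f_1<0$ from \eqref{f < 0}, positivity of $u_{d,r_+},v_{d,r_+},u_{c,r_-},v_{c,r_-}$, and $b_2>0$ from \refP{Prop B1 inv} to obtain non-vanishing of $\tilde{f_1}$ on $(r,+\infty)$ before invoking \refL{Lem inv}. The only discrepancy is the sign you assign to $\tilde{f_1}$ (you conclude $\tilde{f_1}>0$, whereas \eqref{f(z)} taken literally with $f_1<0$ and $b_1<0$ gives $\tilde{f_1}<0$, as stated in the paper); this is immaterial here, since \refL{Lem inv} only requires that the symbol not vanish on $(0,+\infty)$.
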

\begin{proof}
For a given $\theta \in (0,\pi)$, we have $f_1, \tilde{f_1} \in H(S_{\theta})$. Moreover, for $z \in \CC \setminus (-\infty,r]$, since
$$k_j b_1^{-1}(z) \left(-\sqrt{z+r_+}\right)^{l_j} \left(-\sqrt{z+r_-}\right)^{m_j} \left(-\sqrt{z}\right)^{n_j},\quad \text{for all }j \in J,$$
are polynomial functions, we deduce that $1-\tilde{f_1} \in \mathcal{E}_\infty (S_{\theta})$.

From \eqref{f < 0}, \refP{Prop B1 inv} and \refR{Rem équivalences}, we know that $f_1 < 0$ and $b_2>0$ on $(r,+\infty)$. Then, since $u_{d,r_+},u_{c,r_-},v_{d,r_+},v_{c,r_-} > 0$ on $(r,+\infty)$ and due to \eqref{b1} and \eqref{f(z)}, we deduce that $\tilde{f_1} < 0$ on $(r,+\infty)$. 

Therefore, noting $\tilde{f_{r,1}}(z) = \tilde{f_1}(z+r)$ and  applying \refL{Lem inv} with $G = \tilde{f_1}$ and operator $P=-A-rI$, thus we deduce that operator $F_1 = \tilde{f_{r,1}}(-A-rI) = \tilde{f_1}(-A)$ is invertible with bounded inverse.
\end{proof}
This result finally leads us to state the following main result of this section. 
\begin{Prop}\label{Prop det lambda1 inv}
Assume that $(H_1)$, $(H_2)$, $(H_3)$, $(H_4)$ hold and $k_+k_- >0$. Thus
\begin{itemize}
\item if $r_+ > 0$ and $r_- > 0$,

\item if $r_+ < 0$ and $r_- < 0$, such that \eqref{hyp - -} holds,

\item if $r_+ > 0$ and $r_- < 0$, such that \eqref{hyp + -} holds,

\item if $r_+ < 0$ and $r_- > 0$, such that \eqref{hyp - +} holds,
\end{itemize}
then $\det(\Lambda_1)$ is invertible with bounded inverse.
\end{Prop}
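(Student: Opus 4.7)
The plan is short because almost all the work has been factored into the two preceding propositions. Starting from the factorisation \eqref{det}, namely
\[
\det(\Lambda_1) \;=\; -\,W^{-2}\,B_1\,F_1,
\]
I would prove invertibility of $\det(\Lambda_1)$ as a composition of three invertible operators with bounded inverses.

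First, I would note that $W = U_- U_+ V_- V_+$ is invertible with bounded inverse in $\mathcal{L}(X)$: by Lemma~\ref{lem U+- V+- inv} each of $U_\pm$, $V_\pm$ is invertible with bounded inverse under $(H_1)$--$(H_4)$, so the product is too, and hence $W^{-2} \in \mathcal{L}(X)$. Second, under the four sign/sign--and--inequality hypotheses of the statement, Proposition~\ref{Prop B1 inv} applies (the hypotheses on $r_\pm$ here are equivalent to those on $l_+/k_-$ and $l_-/k_+$ there, via Remark~\ref{Rem équivalences} and the standing assumption $k_+k_- > 0$), so $B_1$ is invertible with bounded inverse. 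Third, Proposition~\ref{prop inv F} applies under exactly the same list of hypotheses, so $F_1$ is invertible with bounded inverse.

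Combining the three facts, $\det(\Lambda_1)$ is a product of three boundedly invertible operators on $X$, hence itself invertible with bounded inverse, namely
\[
\det(\Lambda_1)^{-1} \;=\; -\,F_1^{-1}\,B_1^{-1}\,W^{2}.
\]

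There is essentially no obstacle beyond checking that the hypotheses of Propositions~\ref{Prop B1 inv} and~\ref{prop inv F} are satisfied in each of the four cases of the statement; all four have already been handled there under identical assumptions. The only subtle point worth flagging explicitly in the write-up is the translation between the conditions $l_+/k_- > 0$, $l_-/k_+ > 0$ used in Proposition~\ref{Prop B1 inv} and the conditions $r_+ > 0$, $r_- > 0$ used here, which follows from $k_+ k_- > 0$ exactly as stated in Remark~\ref{Rem équivalences}.
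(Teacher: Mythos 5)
Your proof is correct and follows exactly the paper's route: the paper's own proof of this proposition is the one-line observation that $\det(\Lambda_1)=-W^{-2}B_1F_1$ is invertible with bounded inverse by \eqref{det}, Proposition~\ref{Prop B1 inv} and Proposition~\ref{prop inv F}. Your additional remarks on the bounded invertibility of $W$ via Lemma~\ref{lem U+- V+- inv} and the translation of hypotheses via Remark~\ref{Rem équivalences} are accurate and merely make explicit what the paper leaves implicit.
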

\begin{proof}
From \eqref{det}, \refP{Prop B1 inv} and \refP{prop inv F}, it follows that $\det(\Lambda_1) = -W^{-2}B_1F_1$, is invertible with bounded inverse.
\end{proof}

\subsubsection{Second case}

Let $r_+ \in \RR\setminus\{0\}$ and $r_- = 0$. In the same way than previously, using functional calculus, we prove that the determinant of system \eqref{syst trans M}, given by \eqref{det lambda 2}, is invertible with bounded inverse. Due to \refL{Lem D3+ D3-}, and the definition of $D_4$, we obtain:
$$D_3^+ = g_3^+(-A), \quad D_3^- = g_3^-(-A) \quad \text{and} \quad D_4 = g_4(-A),$$ 
where, for $z \in \CC \setminus \RR_-$, we have set
$$\left\{ \begin{array}{lcl}
g_3^+ (z) & = & \dis 4 k_+^2 (\sqrt{z + r_+} + \sqrt{z})^2 u_{d,r_+}^{-2}(z) v_{d,r_+}^{-2}(z) g_{d,r_+}(z) \\ \ecart

g_3^- (z) & = & \dis 16 k_-^2 \,z\, u_{c,0}^{-2}(z) v_{c,0}^{-2}(z) g_{c,0}(z) \\ \ecart

g_4(z) & = & \dis - 2 \sqrt{z}\left(k_+ f_{d,r_+,3} (z) k_- f_{c,0,2}(z) + k_+ f_{d,r_+,2}(z) k_- f_{c,0,3}(z)\right) \vspace{0.1cm} \\
&&\dis + 2 z k_+ f_{d,r_+,1}(z) k_- f_{c,0,1}(z),
\end{array}\right.$$
with $u_{\delta,r}$, $v_{\delta,r}$, $g_{\delta,r}$ and $f_{\delta,r,i}$ the complex functions defined in section~\ref{sect funct calc}. Thus
\begin{equation} \label{f2(-A)}
\det(\Lambda_2) = D_3^+ + D_3^- + D_4 = f_2(-A),
\end{equation}
with $f_2 = g_3^+ + g_3^- + g_4$. Note that, for some $\theta \in (0,\pi)$, we have $f_2 \in H(S_\theta)$ and due to \refR{Rem f1 +- > 0 et f2,f3 +- < 0} and \refL{Lem g < 0}, for $x> \max(-r_+,0)$, we have 
\begin{equation}\label{f2}
f_2(x)= g_3^+ (x) + g_3^- (x) + g_4 (x), \quad \text{where} \quad g_3^+ < 0 \quad \text{and} \quad g_3^-,g_4 > 0.
\end{equation}

\begin{Lem}\label{Lem f2 > 0}
Let $k_+k_- > 0$. Then
\begin{itemize}
\item if $r_+ > 0$ such that 
\begin{equation}\label{hyp r+ > 0}
r_+ \geqslant \frac{\left(\sqrt{t+1} + \sqrt{t}\right)^2}{t^2} \frac{k_+^2}{4 k_-^2}, \quad \text{for } t> 0~\text{fixed}.
\end{equation}
for all $x \geqslant tr_+$, we have  $f_2(x) > 0$.

\item if $r_+ < 0$ such that 
\begin{equation}\label{hyp r+ < 0}
r_+ \leqslant - \frac{27 k_+^2}{64k_-^2},
\end{equation}
for all $x \geqslant - r_+$, we have $f_2(x) > 0$.
\end{itemize}
\end{Lem}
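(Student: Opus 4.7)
The starting point is the decomposition $f_2 = g_3^+ + g_3^- + g_4$ recorded in \eqref{f2}, together with the sign information $g_3^+(x) < 0$ and $g_3^-(x), g_4(x) > 0$ on the relevant interval. Since $g_4(x) > 0$ can simply be dropped, the target inequality reduces to the cleaner comparison $g_3^-(x) > |g_3^+(x)|$.

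The next step is to turn this into an algebraic inequality in $x$ that can be handled by elementary analysis. Writing out the definitions,
$$|g_3^+(x)| \;=\; 4 k_+^2 (\sqrt{x+r_+}+\sqrt{x})^2\, u_{d,r_+}^{-2}(x)\, v_{d,r_+}^{-2}(x)\, |g_{d,r_+}(x)|,$$
and using the key identity $(\sqrt{x+r_+}+\sqrt{x})(\sqrt{x+r_+}-\sqrt{x}) = r_+$, the factor $(\sqrt{x+r_+}+\sqrt{x})^2 g_{d,r_+}(x)$ simplifies, up to exponentially small corrections, to a constant multiple of $r_+(\sqrt{x+r_+}+\sqrt{x})$. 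The exponentials $e^{-d\sqrt{x}}$, $e^{-d\sqrt{x+r_+}}$ all lie in $(0,1)$ and can be bounded uniformly, and positive lower bounds on $u_{d,r_+}$, $v_{d,r_+}$ (already exploited in \refR{Rem f1 +- > 0 et f2,f3 +- < 0}) then yield an explicit upper estimate of the form $|g_3^+(x)| \leqslant C_+ \, k_+^2 |r_+|(\sqrt{x+r_+}+\sqrt{x})$. A parallel, slightly simpler treatment of $g_3^-(x) = 16 k_-^2 x\, u_{c,0}^{-2}(x) v_{c,0}^{-2}(x) g_{c,0}(x)$, using that $g_{c,0}(x) \geqslant \sqrt{x}$ up to exponentially small corrections (this is visible from the computation in the proof of \refL{Lem g < 0}), gives a matching lower bound $g_3^-(x) \geqslant C_-\, k_-^2\, x^{3/2}$.

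For the case $r_+ > 0$, the comparison then takes the shape $4 k_-^2 x^3 \geqslant k_+^2 r_+ (\sqrt{x+r_+}+\sqrt{x})^2$. Since the ratio of the right-hand side to the left is decreasing in $x$, it is enough to check the inequality at the left endpoint $x = t r_+$, where $\sqrt{x+r_+}+\sqrt{x} = \sqrt{r_+}(\sqrt{t+1}+\sqrt{t})$; after substitution this is exactly hypothesis \eqref{hyp r+ > 0}. For the case $r_+ < 0$, I set $s = -r_+ > 0$ and parameterise $x = \lambda s$ with $\lambda \geqslant 1$; the analogous comparison reduces to $s \geqslant k_+^2/(k_-^2\, \psi(\lambda))$ for an explicit rational function $\psi$ of $\lambda$. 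Minimising $1/\psi(\lambda)$ over $\lambda \geqslant 1$ by a one-variable calculus exercise produces the universal constant $27/64$, which is exactly the threshold appearing in \eqref{hyp r+ < 0}.

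The main obstacle is the book-keeping of the exponential remainders: I need to make sure the clean asymptotic comparison above survives at finite $x$ and not only in the limit $x \to +\infty$, so that the optimisation at the boundary $x = t r_+$ (resp.\ $x = -r_+$) actually delivers the sharp constants $(\sqrt{t+1}+\sqrt{t})^2/(4t^2)$ and $27/64$. Once these uniform bounds on $u_{d,r_+}, v_{d,r_+}, g_{d,r_+}$ and their $r=0$ counterparts are in place, the argument reduces to the two explicit one-variable inequalities described above.
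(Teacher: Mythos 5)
There is a genuine gap, and it lies in the very first step of your plan: you discard $g_4$ and try to prove $g_3^-(x) > |g_3^+(x)|$, whereas the paper does the opposite — it discards $g_3^-$ and retains the cross term $2 k_+ k_-\, x\, f_{d,r_+,1}(x) f_{c,0,1}(x)$ of $g_4$. This is not a cosmetic difference: the comparison $g_3^- > |g_3^+|$ is simply false in the regime covered by the lemma, so no amount of book-keeping of the exponential remainders can rescue it. Indeed, with your own (correct) asymptotics, near the left endpoint $x = t r_+$ one has
$$g_3^-(t r_+) \approx 16\, k_-^2\, t^{3/2} r_+^{3/2}, \qquad |g_3^+(t r_+)| \approx 4\, k_+^2\, r_+ \left(\sqrt{(t+1)r_+} + \sqrt{t r_+}\right) = 4\, k_+^2 \left(\sqrt{t+1}+\sqrt{t}\right) r_+^{3/2},$$
so both sides scale as $r_+^{3/2}$ and their ratio, $4 k_-^2 t^{3/2} / \bigl(k_+^2 (\sqrt{t+1}+\sqrt{t})\bigr)$, is \emph{independent of} $r_+$. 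For small $t$ (or large $k_+/k_-$) this ratio is less than $1$ no matter how large $r_+$ is, so $g_3^-$ alone cannot dominate $|g_3^+|$ under hypothesis \eqref{hyp r+ > 0}. What saves the lemma is precisely the term you dropped: $g_4$ contributes $2 k_+ k_-\, x\, f_{d,r_+,1} f_{c,0,1} \approx 8 k_+ k_-\, x\, (\sqrt{x+r_+}+\sqrt{x})\sqrt{x+r_+}$, which at $x = t r_+$ scales as $r_+^{2}$ and therefore beats $|g_3^+| \sim r_+^{3/2}$ once $r_+$ is large enough — this is exactly where the threshold on $r_+$ comes from. Concretely, the paper's comparison reduces to $2 k_+ k_-\, x \geqslant k_+^2 (\sqrt{x+r_+}+\sqrt{x})$, i.e. $h_5(x) := x/(\sqrt{x+r_+}+\sqrt{x}) \geqslant k_+/(2k_-)$; evaluating $h_5$ at $x = t r_+$ gives \eqref{hyp r+ > 0}, and minimising $h_5$ over $(0,+\infty)$ when $r_+<0$ (minimum at $y = -r_+/3$) gives the constant $27/64$ in \eqref{hyp r+ < 0}. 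Your route cannot produce either constant.

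Two secondary points. First, your claimed final inequality $4 k_-^2 x^3 \geqslant k_+^2 r_+ (\sqrt{x+r_+}+\sqrt{x})^2$ evaluated at $x = t r_+$ yields $r_+ \geqslant \frac{(\sqrt{t+1}+\sqrt{t})^2}{4 t^3}\frac{k_+^2}{k_-^2}$, which differs from \eqref{hyp r+ > 0} by a factor of $t$ — so even on its own terms the reduction is not "exactly" the hypothesis. Second, your plan requires uniform positive lower bounds on $u_{d,r_+}$ and $v_{d,r_+}$ on the relevant half-line; note that $u_{d,r_+}$ vanishes at the endpoint $x = r_m$ (e.g. $u_{d,r_+}(0)=0$ when $r_+>0$), so such bounds are delicate. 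The paper's proof is organised precisely to avoid them: it factors $u_{d,r_+}^{-2} v_{d,r_+}^{-2} u_{c,0}^{-1} v_{c,0}^{-1}$ out as a positive prefactor and compares the remaining numerators ($h_2$, $h_3$, $h_4$ in the paper's notation), using only the exact algebraic identities satisfied by $u_{d,r_+}+v_{d,r_+}$ and $u_{d,r_+}-v_{d,r_+}$. I recommend you restructure your argument around the cross term of $g_4$ rather than around $g_3^-$.
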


\begin{proof}
From \eqref{f2}, we deduce
$$f_2(x) \geqslant g_3^+(x) + g_4(x) \geqslant g_3^+ (x) + 2 k_+k_-\, x \,f_{d,r_+,1}(x) f_{c,0,1}(x).$$

Let $r = \max(-r_+,0)$. For $x \in (r,+\infty)$, setting $y=x-r > 0$ and noting 
$$h_1(y) = g_3^+ (y+r) + 2 k_+k_-\, (y+r) \,f_{d,r_+,1}(y+r) f_{c,0,1}(y+r),$$
it follows
$$\begin{array}{lll}
h_1(y) & = & \dis 4 k_+^2 \frac{(\sqrt{y+r + r_+} + \sqrt{y+r})^2}{u_{d,r_+}^{2}(y+r) v_{d,r_+}^{2}(y+r)} g_{d,r_+}(y+r) \\ \ecart
&& \dis + 2 k_+k_-\, (y+r) \,f_{d,r_+,1}(y+r) f_{c,0,1}(y+r).
\end{array}$$
Since we have
$$0 > g_{d,r_+}(y+r) \geqslant - \sqrt{y+r + r_+}\left(1 - e^{-2d(\sqrt{y+r + r_+} + \sqrt{y+r})}\right)^2,$$ 
then
$$\begin{array}{lll}
h_1(y) & \geqslant & \dis 4 \frac{(\sqrt{y+r + r_+} + \sqrt{y+r})\sqrt{y+r + r_+}}{u_{d,r_+}^{2}(y+r) v_{d,r_+}^{2}(y+r)u_{c,0} (y+r) v_{c,0} (y+r)}  h_2(y),
\end{array}$$
where 
$$\begin{array}{lll}
h_2(y) & = & \dis - k_+^2 (\sqrt{y+r + r_+} + \sqrt{y+r}) \left(1 - e^{-2d(\sqrt{y+r + r_+} + \sqrt{y+r})}\right)^2 u_{c,0} (y+r) v_{c,0} (y+r) \\ \ecart
&& \dis + k_+k_-\, (y+r) \left(1 - e^{-2c \sqrt{y+r}}\right)^2 v_{d,r_+} (y+r) \left(1 + e^{-d\sqrt{y+r}}\right)\left(1 + e^{-d\sqrt{y+r+r_+}}\right)\\ \ecart
&& \dis + k_+k_-\, (y+r) \left(1 - e^{-2c \sqrt{y+r}}\right)^2 u_{d,r_+} (y+r)\left(1 - e^{-d\sqrt{y+r}}\right)\left(1 - e^{-d\sqrt{y+r+r_+}} \right) \\ \\

& \geqslant & \dis - k_+^2 (\sqrt{y+r + r_+} + \sqrt{y+r}) \left(1 - e^{-2d(\sqrt{y+r + r_+} + \sqrt{y+r})}\right)^2 \left(1 - e^{-2c \sqrt{y+r}}\right)^2 \\ \ecart
&& \dis + k_+k_-\, (y+r) \left(1 - e^{-2c \sqrt{y+r}}\right)^2 h_3(y),
\end{array}$$
with
$$\begin{array}{lll}
h_3(y) & = & \dis v_{d,r_+} (y+r) \left(1 + e^{-d\sqrt{y+r}}\right)\left(1 + e^{-d\sqrt{y+r+r_+}}\right) \\ \ecart
&& \dis + u_{d,r_+} (y+r)\left(1 - e^{-d\sqrt{y+r}}\right)\left(1 - e^{-d\sqrt{y+r+r_+}} \right) 
\end{array}$$
and
$$\begin{array}{lll}
h_3(y) &\hspace{-0.15cm} = &\hspace{-0.15cm} \dis 2 v_{d,r_+} (y+r)\left(1 + e^{-d(\sqrt{y+r + r_+} + \sqrt{y+r})}\right) + 2 u_{d,r_+} (y+r)\left(e^{-d\sqrt{y+r}} + e^{-d\sqrt{y+r + r_+} }\right) \\ \\

&\hspace{-0.15cm} = &\hspace{-0.15cm} \dis 2 \left(1 - e^{-d(\sqrt{y+r + r_+} + \sqrt{y+r})}\right) \left(1 + e^{-d(\sqrt{y+r + r_+} + \sqrt{y+r})}\right) \\ \ecart
&\hspace{-0.15cm}&\hspace{-0.15cm}\dis + 2 \frac{(\sqrt{y+r + r_+} + \sqrt{y+r})^2}{r_+}  \left(e^{-d\sqrt{y+r}} - e^{-d\sqrt{y+r + r_+} }\right)\left(e^{-d\sqrt{y+r}} + e^{-d\sqrt{y+r + r_+} }\right) \\ \\

&\hspace{-0.15cm} = &\hspace{-0.15cm} \dis 2 \left(1 - e^{-2d(\sqrt{y+r + r_+} + \sqrt{y+r})}\right) \\ \ecart
&\hspace{-0.15cm}&\hspace{-0.15cm} \dis + 2 \frac{(\sqrt{y+r + r_+} + \sqrt{y+r})^2}{r_+}  \left(e^{-2d\sqrt{y+r}} - e^{-2d\sqrt{y+r + r_+} }\right).
\end{array}$$
Moreover, for all $y>0$, since we have
$$\frac{ e^{-2d\sqrt{y+r}} - e^{-2d\sqrt{y+r + r_+}} }{r_+} > 0, \quad \text{for }r_+ \in \RR\setminus\{0\},$$
we deduce that 
$$h_3(y) >  2 \left(1 - e^{-2d(\sqrt{y+r + r_+} + \sqrt{y+r})}\right),$$
and
$$\begin{array}{lll}
h_2(y) & > & \dis - k_+^2 (\sqrt{y+r + r_+} + \sqrt{y+r}) \left(1 - e^{-2d(\sqrt{y+r + r_+} + \sqrt{y+r})}\right)^2 \left(1 - e^{-2c \sqrt{y+r}}\right)^2 \\ \ecart
&& \dis + 2k_+k_-\, (y+r) \left(1 - e^{-2c \sqrt{y+r}}\right)^2 \left(1 - e^{-2d(\sqrt{y+r + r_+} + \sqrt{y+r})}\right) \\ \\

& > & \dis - k_+^2 (\sqrt{y+r + r_+} + \sqrt{y+r}) \left(1 - e^{-2d(\sqrt{y+r + r_+} + \sqrt{y+r})}\right) \left(1 - e^{-2c \sqrt{y+r}}\right)^2 \\ \ecart
&& \dis + 2k_+k_-\, (y+r) \left(1 - e^{-2d(\sqrt{y+r + r_+} + \sqrt{y+r})}\right) \left(1 - e^{-2c \sqrt{y+r}}\right)^2  \\ \\

& > & \dis \left(1 - e^{-2d(\sqrt{y+r + r_+} + \sqrt{y+r})}\right) \left(1 - e^{-2c \sqrt{y+r}}\right)^2 h_4(y),
\end{array}$$
where
$$h_4(y) = 2k_+k_-\, (y+r) - k_+^2 (\sqrt{y+r + r_+} + \sqrt{y+r}).$$
Thus
\begin{equation}\label{h4 > 0}
h_4(y) \geqslant 0 \Longleftrightarrow \frac{y+r}{\sqrt{y+r + r_+} + \sqrt{y+r}} \geqslant \frac{k_+^2}{2k_+k_-}.
\end{equation}
We set
\begin{equation}\label{h5}
h_5(y) = \frac{y+r}{\sqrt{y+r + r_+} + \sqrt{y+r}},
\end{equation}
hence
$$h'_5(y) = \left(\frac{1}{\sqrt{y+r + r_+} + \sqrt{y+r}} \right)\left(1 - \frac{1}{2}\sqrt{\frac{y+r}{y+r+r_+}}\right).$$
\begin{enumerate}
\item If $r_+<0$, then $r=-r_+$ and
$$\frac{y+r}{y+r+r_+} = \frac{y + r}{y},$$
moreover
$$h'_5(y) \geqslant 0 \Longleftrightarrow 1 - \frac{1}{2}\sqrt{\frac{y + r}{y}} \geqslant 0 \Longleftrightarrow 4 \geqslant \frac{y + r}{y} \Longleftrightarrow y \geqslant \frac{r}{3}.$$
Thus, we have
$$h_5(y) \geqslant h_5\left(\frac{r}{3}\right) = \frac{\frac{4r}{3}}{\sqrt{\frac{r}{3}} + 2\sqrt{\frac{r}{3}}} = \frac{4}{3}\sqrt{\frac{r}{3}} > 0.$$
This yields that for all $y\geqslant 0$, we have
$$h_5(y) \geqslant\frac{4}{3}\sqrt{\frac{r}{3}} > 0.$$
Therefore, from \eqref{h4 > 0} and \eqref{h5}, we deduce that
$$h_4(y) \geqslant 0 \Longleftrightarrow h_5(y) \geqslant \frac{k_+^2}{2k_+k_-} \Longleftrightarrow \frac{4}{3}\sqrt{\frac{r}{3}} \geqslant \frac{k_+}{2k_-},$$
hence, since $r=-r_+ > 0$, we obtain 
$$\frac{4}{3}\sqrt{\frac{r}{3}} \geqslant \frac{k_+}{2k_-} \Longleftrightarrow \sqrt{\frac{r}{3}} \geqslant \frac{3k_+}{8k_-} \Longleftrightarrow \frac{r}{3} \geqslant \frac{9k_+^2}{64k_-^2} \Longleftrightarrow -r_+ \geqslant \frac{27 k_+^2}{64k_-^2}.$$

\item If $r_+>0$, then $r = 0$ and 
$$\frac{y+r}{y+r+r_+} = \frac{y}{y+r_+} < 1,$$
hence $h'_5 > 0$ and $h_5$ is an increasing function. Thus, from \eqref{h5}, since $r=0$, it follows that
$$h_5(y) = \frac{y}{\sqrt{y + r_+} + \sqrt{y}},$$
then
$$h_5(y) \geqslant \frac{k_+^2}{2k_+k_-} \Longleftrightarrow \frac{y}{\sqrt{y + r_+} + \sqrt{y}} \geqslant \frac{k_+}{2k_-}.$$
Moreover, for $t> 0$ fixed, we have
$$h_5(tr_+) \geqslant \frac{k_+}{2k_-} \Longleftrightarrow \frac{tr_+}{\sqrt{(t+1) r_+} + \sqrt{tr_+}} \geqslant \frac{k_+}{2k_-} \Longleftrightarrow \frac{t}{\sqrt{t+1} + \sqrt{t}} \sqrt{r_+} \geqslant \frac{k_+}{2k_-},$$
hence
$$\sqrt{r_+} \geqslant \frac{\sqrt{t+1} + \sqrt{t}}{t} \frac{k_+}{2k_-} \Longleftrightarrow r_+ \geqslant \frac{\left(\sqrt{t+1} + \sqrt{t}\right)^2}{t^2} \frac{k_+^2}{4 k_-^2}.$$
\end{enumerate}
Finally, if $r_+ > 0$ such that \eqref{hyp r+ > 0} holds, then since $y=x$, for all $x \geqslant tr_+$, we have $h_2 (x) > 0$, $h_1(x) > 0$ and $f_2(x)> 0$. Moreover, $r_+ < 0$ such that \eqref{hyp r+ < 0} holds, then for all $y > 0$, we have $h_2 (y) > 0$, $h_1(y) > 0$ and since $y=x + r_+$, for all $x > -r_+$, it follows that $f_2(x) > 0$.
\end{proof}
Therefore, as in the first case, since we have $U_\pm \sim I$ and $V_\pm \sim I$, then by setting $W=U_- U_+ V_- V_+ \sim I$, we deduce that
$$\left\{\begin{array}{lcllcl}
W P_1^+ & \sim & 2k_+ (L_+ + M), & W Q_1^- & \sim & 2k_- I \\ \ecart
 
W P_2^+ & \sim & 2k_+ (L_+ + M), & W Q_2^- & \sim & 2k_- I \\ \ecart
 
W P_3^+ & \sim & 2k_+ (L_+ + M) L_+, & W Q_3^- & \sim & 2k_- I.
\end{array}\right.$$
Thus
$$\begin{array}{rcl}
W^2 \det(\Lambda_2) & = & \dis \left(W P_2^+ W P_3^+ - M \left(W P_1^+\right)^2 \right) + 4 M^2 \left(W Q_2^- W Q_3^- - M \left(W Q_1^-\right)^2 \right) \\ \ecart
&& \dis + 2M \left(W P_3^+ W Q_2^- + W P_2^+ W Q_3^- + M W P_1^+ W Q_1^- \right) \\ \\

& \sim & \dis 4 k_+^2 (L_+ + M)^2 (L_+ - M) + 16 k_-^2 M^2 (I - M) \\ \ecart
&& \dis + 8 k_+ k_- (L_+ + M) M (L_+ + M + I).
\end{array}$$
From \eqref{L-M=}, we have
$$\begin{array}{rcl}
W^2 \det(\Lambda_2) & \sim & \dis 4 k_+^2 r_+ (L_+ + M) + 16 k_-^2 M^2 (I - M) \\ \ecart
&& \dis + 8 k_+ k_- (L_+ + M) M (L_+ + M + I) \\ \\

& \sim & \dis 4 k_+ l_+ (L_+ + M) + 16 k_-^2 M^2 (I - M) \\ \ecart
&& \dis + 8 k_+ k_- (L_+ + M) M (L_+ + M + I)
\end{array}$$
Hence, we note
$$B_3 = 4 k_+ l_+ (L_+ + M) + 16 k_-^2 M^2 (I - M) + 8 k_+ k_- (L_+ + M) M (L_+ + M + I).$$
Thus, we obtain 
\begin{equation}\label{det lambda2 =  W-2 BF}
\det(\Lambda_2) = W^{-2} \left(B_3 + \sum_{j \in J} k_j L_+^{l_j} M^{m_j} e^{\alpha_j L_+} e^{\beta_j M}\right),
\end{equation}
where $J$ is a finite set and for any $j \in J$:
$$k_j \in \RR;~l_j, m_j \in \NN,~ \alpha_j, \beta_j \in \RR_+ \quad \text{with} \quad \alpha_j + \beta_j \neq 0.$$
\begin{Prop}\label{Prop B5 inv}
Assume that $(H_1)$, $(H_2)$, $(H_3)$ hold and $k_+k_- > 0$. If $\dis\frac{k_-}{k_+} \leqslant 2$, then 
$$0 \in \rho\left(8 k_+k_- (L_+ + M)^2 M - 16 k_-^2 M^3\right).$$
\end{Prop}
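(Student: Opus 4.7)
The approach is to apply functional calculus in the spirit of \refP{Prop B1 inv}, reducing the operator identity to a scalar one via BIP calculus and then invoking \refL{Lem inv}.

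First, factor out $M$: since $L_+$ and $M$ commute (both being bounded functional-calculus expressions in $-A$) and $0 \in \rho(M)$ by \refR{Remconsequences}, we have
$$8 k_+ k_- (L_+ + M)^2 M - 16 k_-^2 M^3 = 8 k_- \, M \, \bigl[k_+(L_+ + M)^2 - 2 k_- M^2\bigr],$$
so the claim reduces to the invertibility of $Q := k_+(L_+ + M)^2 - 2 k_- M^2$. Since $k_+ k_- > 0$, the quantity $\alpha := \sqrt{2k_-/k_+}$ is a well-defined positive real, and the hypothesis $k_-/k_+ \leqslant 2$ is exactly $\alpha \leqslant 2$. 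An algebraic factorisation yields
$$Q = k_+ \bigl[(L_+ + M)^2 - \alpha^2 M^2\bigr] = k_+\bigl(L_+ + (1-\alpha) M\bigr)\bigl(L_+ + (1+\alpha) M\bigr),$$
with the two factors commuting; it thus suffices to prove each factor is invertible with bounded inverse.

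For $L_+ + (1+\alpha) M$, with $1+\alpha > 0$, I would invoke the same strategy as for $L_+ + M$ in \refR{Remconsequences}: $-(L_+ + (1+\alpha)M) = (-L_+) + (1+\alpha)(-M)$ is a sum of two commuting BIP operators of angle $\theta_A/2 < \pi/2$ with a positive scalar, hence belongs to BIP with zero in its resolvent set by the Dore--Venni sum theorem. Equivalently, the associated scalar symbol $z \mapsto \sqrt{z+r_+} + (1+\alpha)\sqrt{z}$ is strictly positive on $(r_m, +\infty)$ with $r_m := \max(-r_+,0)$, so \refL{Lem inv} applies after the usual shift by $r_m$.

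The main obstacle is the factor $L_+ + (1-\alpha) M$ when $\alpha > 1$. The same BIP/Dore--Venni argument handles the easy range $\alpha \leqslant 1$, and the borderline $\alpha = 2$ follows at once from \eqref{L-M=}: $L_+ - M = r_+ (L_+ + M)^{-1}$, which is invertible since $r_+ \neq 0$ and $L_+ + M$ is invertible. For the remaining range $1 < \alpha < 2$, I would write
$$L_+ + (1-\alpha) M = (L_+ + M)\bigl[I - \alpha\, M(L_+ + M)^{-1}\bigr]$$
and apply \refL{Lem inv} to the scalar symbol $g(z) := 1 - \alpha \sqrt{z}/(\sqrt{z+r_+} + \sqrt{z})$ after shifting by $r_m$. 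The hypothesis $\alpha \leqslant 2$ is precisely what is needed to rule out zeros of $g$ on the shifted positive half-line: the ratio $\sqrt{z}/(\sqrt{z+r_+} + \sqrt{z})$ is at most $1/2$ when $r_+ \geqslant 0$, so $\alpha \cdot (\text{ratio}) \leqslant \alpha/2 \leqslant 1$ with equality only at the excluded degenerate point. This last scalar analysis, keeping careful track of how $(H_2)$ bounds the spectrum of $-A$ away from zero through $\|A^{-1}\|$, is the step that requires the most care and is the true source of the sharp constant hidden in $k_-/k_+ \leqslant 2$.
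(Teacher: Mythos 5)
Your factorisation $8k_+k_-(L_++M)^2M-16k_-^2M^3=8k_-M\,k_+\bigl(L_++(1-\alpha)M\bigr)\bigl(L_++(1+\alpha)M\bigr)$ with $\alpha=\sqrt{2k_-/k_+}$ is correct and is a genuinely different route from the paper's: the paper splits at $k_-/k_+=1$, rewrites the bracket via $L_+^2-M^2=r_+I$ as $r_+I+2M\bigl(L_+-(\tfrac{k_-}{k_+}-1)M\bigr)$ (resp.\ as a sum of three nonnegative pieces) and invokes BIP sum theorems, whereas you localise the whole difficulty in the single factor $L_++(1-\alpha)M$. Your treatment of $\alpha\leqslant 1$, of $\alpha=2$ via \eqref{L-M=}, and of $1<\alpha<2$ when $r_+>0$ is sound, up to a small repair: \refL{Lem inv} requires $1-G\in\mathcal{E}_\infty(S_\theta)$, and your $g$ tends to $1-\alpha/2\neq 1$ at infinity, so you must first normalise by that limit (harmless since $\alpha<2$ there).

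The genuine gap is the case $r_+<0$ with $1<\alpha<2$, i.e.\ $\tfrac12<k_-/k_+<2$. There your ratio bound reverses: for $x>-r_+$ one has $\sqrt{x+r_+}<\sqrt{x}$, so $\sqrt{x}/(\sqrt{x+r_+}+\sqrt{x})>\tfrac12$ and tends to $1$ as $x\downarrow -r_+$. Hence $g(x)\to 1-\alpha<0$ at the left endpoint while $g(x)\to 1-\alpha/2>0$ at infinity, so $g$ vanishes at $x_0=-r_+/(\alpha(2-\alpha))\in(-r_+,+\infty)$ and \refL{Lem inv} cannot be applied. This cannot be repaired by ``tracking $\|A^{-1}\|$'': $(H_2)$ and $(H_4)$ only confine $\sigma(-A)$ to $(-r_+,+\infty)$, which contains $x_0$, and $x_0$ is in fact a zero of the full symbol $b_5$, so the obstruction is intrinsic to $B_5$ and not an artifact of your factorisation (e.g.\ $X=\CC$, $A=-x_0$, $k_+=k_-=1$, $r_+=-1$, $r_-=0$ satisfies all of $(H_1)$--$(H_4)$ and makes $B_5$ the zero operator). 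You therefore need either to restrict the scalar argument to $r_+>0$ or to add a hypothesis keeping $\sigma(-A)$ away from $x_0$. Be aware that the paper's own argument is delicate at exactly the same spot: it relies on $-(1-(\tfrac{k_-}{k_+}-1)^2)A+r_+I$ being BIP, which is automatic only when $r_+>0$; so switching to the paper's decomposition does not by itself close your gap.
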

\begin{proof}
Since $k_+k_- > 0$, we have  $\dis \frac{k_-}{k_+} > 0$ and 
$$8 k_+k_- (L_+ + M)^2 M - 16 k_-^2 M^3 = 8 k_+k_- M \left[L_+^2 + 2L_+ M + M^2 - 2 \frac{k_-}{k_+} M^2\right].$$
From \refR{Remconsequences}, statement 5 and \cite{pruss-sohr}, Corollary 3, p. 444, we deduce that
$$L_+^2,~ 2L_+M,~ M^2 \in \text{BIP}(X,\theta_A).$$  
Thus, if $\dis\frac{k_-}{k_+} \leqslant 1$, then
$$L_+^2 + 2L_+ M + M^2 - 2 \frac{k_-}{k_+} M^2 = L_+^2 - \frac{k_-}{k_+} M^2 + 2L_+ M + \left(1- \frac{k_-}{k_+} \right) M^2.$$
Moreover, for all $\psi \in D(M^2)=D(A)$, due to \eqref{L2-M2}, we have
$$\left(L_+^2 - \frac{k_-}{k_+} M^2\right)\psi = \left[-\left(1-\frac{k_-}{k_+}\right) A + r_+ I\right]\psi,$$
and from \cite{pruss-sohr}, Theorem 3, p. 437 and \cite{arendt-bu-haase}, Theorem 2.3, p. 69, assumptions $(H_2)$ and $(H_3)$ imply that 
$$-\left(1-\frac{k_-}{k_+}\right) A + r_+ I \in \text{BIP}(X,\theta_A),$$
and
$$L_+^2 - \frac{k_-}{k_+} M^2 + 2L_+ M + \left(1- \frac{k_-}{k_+} \right) M^2 \in \text{BIP}(X,\theta_A + \varepsilon),$$
for any $\varepsilon \in (0, \pi-\theta_A)$. Moreover, since $0 \in \rho(L_+M)$, we deduce from \cite{pruss-sohr}, remark at the end of p. 445, that $0\in \rho\left(L_+^2 - \frac{k_-}{k_+} M^2 + 2L_+ M + \left(1- \frac{k_-}{k_+} \right) M^2\right)$. Therefore, since $0 \in \rho(M)$ and $k_+k_- > 0$, it follows that
$$0 \in \rho\left(8 k_+k_- (L_+ + M)^2 M - 16 k_-^2 M^3\right).$$
In the same way, if $1 < \dis\frac{k_-}{k_+} \leqslant 2$, then
$$L_+^2 + 2L_+ M + M^2 - 2 \frac{k_-}{k_+} M^2 = L_+^2 - M^2 + 2L_+ M - 2\left(\frac{k_-}{k_+} - 1\right) M^2 + M^2 - M^2,$$
hence, for all $\psi \in D(M^2) = D(A)$, from \eqref{L2-M2}, we obtain
\begin{equation}\label{k-/k+ < 2}
\left(L_+^2 + 2L_+ M + M^2 - 2 \frac{k_-}{k_+} M^2\right)\psi = r_+ \psi + 2M \left(L_+ - \left(\frac{k_-}{k_+} - 1\right) M\right) \psi.
\end{equation}
Moreover, we have
$$\left(L_+ - \left(\frac{k_-}{k_+} - 1\right)M\right)\psi = \left(L_+ + \left(\frac{k_-}{k_+} - 1\right)M\right)^{-1} \left(L_+^2 - \left(\frac{k_-}{k_+} - 1\right)^2 M^2\right)\psi,$$
and from \cite{pruss-sohr}, Theorem 3, p. 437 and \cite{arendt-bu-haase}, Theorem 2.3, p. 69, assumptions $(H_2)$ and $(H_3)$ imply that 
\begin{equation}\label{k-/k+ < 2 BIP}
\left(L_+^2 - \left(\frac{k_-}{k_+} - 1\right)^2 M^2\right) = -\left(2 - \frac{k_-}{k_+}\right) A + r_+ I \in \text{BIP}(X,\theta_A).
\end{equation}
Finally, from $(H_2)$, $(H_3)$, \eqref{k-/k+ < 2}, \eqref{k-/k+ < 2 BIP} and \cite{pruss-sohr}, Theorem 3, p. 437, we deduce that
$$r_+ \psi + 2M \left(L_+ - \left(\frac{k_-}{k_+} - 1\right) M\right) \in \text{BIP}(X,\theta_A),$$
and 
$$0 \in \rho\left(r_+ \psi + 2M \left(L_+ - \left(\frac{k_-}{k_+} - 1\right) M\right)\right).$$
Therefore, since $0 \in \rho(M)$ and $k_+k_- > 0$, it follows that
$$0 \in \rho\left(8 k_+k_- (L_+ + M)^2 M - 16 k_-^2 M^3\right).$$
\end{proof}
We set 
\begin{equation}\label{B4}
\begin{array}{lll}
B_4 & = & \dis I + 4 k_+ l_+ (L_+ + M) \left(8 k_+k_- (L_+ + M)^2 M - 16 k_-^2 M^3\right)^{-1} \\ \ecart

&& \dis + 16 k_-^2 M^2 \left(8 k_+k_- (L_+ + M)^2 M - 16 k_-^2 M^3\right)^{-1} \\ \ecart
&& \dis + 8 k_+ k_- (L_+ + M) M  \left(8 k_+k_- (L_+ + M)^2 M - 16 k_-^2 M^3\right)^{-1},
\end{array}
\end{equation}
thus, we have
$$B_3 = \left(8 k_+k_- (L_+ + M)^2 M - 16 k_-^2 M^3\right) B_4.$$
Moreover, from \eqref{det lambda2 =  W-2 BF} and noting $B_5 = 8 k_+k_- (L_+ + M)^2 M - 16 k_-^2 M^3$, we have 
\begin{equation}\label{det Lammbda2 = W-2 B5 F2}
\det(\Lambda_2) = W^{-2} B_5 F_2,
\end{equation}
where
\begin{equation}\label{F2}
F_2 = B_4 + \sum_{j \in J} k_j B_5^{-1} L_+^{l_j} M^{m_j} e^{\alpha_j L_+} e^{\beta_j M}.
\end{equation}
Now, for $z \in \CC \setminus [\max(-r_+,0),+\infty)$, we set
$$\tilde{f_2}(z) = b_4(z) + \sum_{j\in J} k_j b_5(z)^{-1} \sqrt{z+r_+}^{l_j} \sqrt{z}^{m_j} e^{-\alpha_j \sqrt{z+r_+}} e^{-\beta_j \sqrt{z}},$$
where $b_3(z) = b_4(z)b_5(z)$, with
$$\begin{array}{lll}
b_4(z) & = & 1 + 4 k_+ l_+ (\sqrt{z+r_+} + \sqrt{z}) \left(8 k_+k_- (\sqrt{z + r_+} + \sqrt{z})^2 \sqrt{z} - 16 k_-^2 \sqrt{z}^3\right)^{-1} \\ \ecart

&& \dis + 16 k_-^2 z \left(-8 k_+k_- (\sqrt{z + r_+} + \sqrt{z})^2 \sqrt{z} + 16 k_-^2 \sqrt{z}^3\right)^{-1} \\ \ecart
&& \dis + 8 k_+ k_- (\sqrt{z+r_+} + \sqrt{z}) \sqrt{z} \left(8 k_+k_- (\sqrt{z + r_+} + \sqrt{z})^2 \sqrt{z} - 16 k_-^2 \sqrt{z}^3\right)^{-1},
\end{array}$$
and
$$b_5(z) = -8 k_+k_- (\sqrt{z + r_+} + \sqrt{z})^2 \sqrt{z} + 16 k_-^2 \sqrt{z}^3.$$
Then $\tilde{f_2}(-A) = F_2$, $b_3(-A) = B_3$, $b_4(-A) = B_4$ and $b_5(-A) = B_5$. Thus, from \eqref{f2(-A)} and \eqref{det Lammbda2 = W-2 B5 F2}, we deduce that
\begin{equation}\label{f2(z)}
f_2(z) = u_{d,r_+}^{-2}(z)v_{d,r_+}^{-2}(z)u_{c,0}^{-2}(z)v_{c,0}^{-2}(z)b_5(z) \tilde{f_2}(z).
\end{equation}
\begin{Prop}\label{Prop F2 inv}
Assume that $(H_1)$, $(H_2)$, $(H_3)$, $(H_4)$ hold and $k_+k_- > 0$ with $\dis\frac{k_-}{k_+} \leqslant 2$. Thus
\begin{itemize}
\item if $r_+ > 0$ such that 
\begin{equation}\label{hyp r+ > 0 F2 inv}
r_+ \geqslant \frac{\left(\sqrt{t+1} + \sqrt{t}\right)^2}{t^2} \frac{k_+^2}{4 k_-^2}, \quad \text{for } t\in \left(0,\frac{1}{r_+\|A^{-1}\|_{\L(X)}} \right)~\text{fixed},
\end{equation}

\item if $r_+ < 0$ such that \eqref{hyp r+ < 0} holds, 
\end{itemize}
then $F_2$, given by \eqref{F2}, is invertible with bounded inverse.
\end{Prop}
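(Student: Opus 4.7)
The plan is to mirror the strategy used in Proposition~\ref{prop inv F} by applying Lemma~\ref{Lem inv} to the shifted function $\tilde{f_{r,2}}(z):=\tilde{f_2}(z+r)$ and the operator $P:=-A-rI$, where $r:=\max(-r_+,0)\geqslant 0$. Assumptions $(H_2)$ and $(H_3)$, together with item~1 of Remark~\ref{Remconsequences}, guarantee that $-A-rI\in\mathrm{BIP}(X,\theta_A)$ is invertible, so the functional calculus is available; by construction $\tilde{f_{r,2}}(-A-rI)=\tilde{f_2}(-A)=F_2$, so the proposition reduces to checking the two hypotheses of Lemma~\ref{Lem inv} for $G=\tilde{f_{r,2}}$.

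For hypothesis $(i)$ I would first verify that $b_4-1\in\mathcal{E}_\infty(S_\theta)$ for every $\theta\in(0,\pi)$: each of the three correction terms defining $b_4$ is a quotient whose denominator $b_5(z)$ grows polynomially in $\sqrt{z}$ strictly faster than the numerator, and the extra sum defining $\tilde{f_2}$ is exponentially decaying thanks to the factors $e^{-\alpha_j\sqrt{z+r_+}}e^{-\beta_j\sqrt{z}}$ with $\alpha_j+\beta_j\neq 0$. Shifting $z\mapsto z+r$ preserves both types of decay, so $1-\tilde{f_{r,2}}\in\mathcal{E}_\infty(S_\theta)$. For hypothesis $(ii)$ I would exploit \eqref{f2(z)} rewritten in the form
$$
\tilde{f_2}(x)\;=\;\frac{f_2(x)}{u_{d,r_+}^{-2}(x)\,v_{d,r_+}^{-2}(x)\,u_{c,0}^{-2}(x)\,v_{c,0}^{-2}(x)\,b_5(x)}.
$$
Remark~\ref{Rem f1 +- > 0 et f2,f3 +- < 0} gives positivity of the $u_{\delta,\cdot},v_{\delta,\cdot}$ factors on the relevant half-line, Lemma~\ref{Lem f2 > 0} gives $f_2(x)>0$ on the appropriate range, and the factorisation $b_5(x)=8k_-\sqrt{x}\bigl(2k_-x-k_+(\sqrt{x+r_+}+\sqrt{x})^2\bigr)$ combined with $k_-/k_+\leqslant 2$ forces $b_5$ to have constant sign there, so $\tilde{f_2}$ is of constant sign and in particular nonzero.

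The main technical point, and the reason why the quantitative bounds on $r_+$ appear, is the spectral alignment between $-A-rI$ and the validity range of Lemma~\ref{Lem f2 > 0}. When $r_+<0$ one has $r=-r_+$, so $\sigma(-A-rI)\subset[0,+\infty)$ by $(H_4)$, and the required range $y>r$ from Lemma~\ref{Lem f2 > 0} is automatic. When $r_+>0$ one has $r=0$ and one instead needs $\tilde{f_2}$ nonzero on all of $\sigma(-A)$; here $(H_4)$ together with $0\in\rho(A)$ (itself a consequence of $(H_2)$ since $r_-=0$, see item~1 of Remark~\ref{Remconsequences}) yields $\sigma(-A)\subset[\|A^{-1}\|_{\L(X)}^{-1},+\infty)$, and the restriction $t\in(0,1/(r_+\|A^{-1}\|_{\L(X)}))$ is chosen precisely so that $tr_+\leqslant\|A^{-1}\|_{\L(X)}^{-1}$, placing the full spectrum of $-A$ inside the positivity range of $f_2$ supplied by Lemma~\ref{Lem f2 > 0}. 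Once this alignment is secured, Lemma~\ref{Lem inv} delivers the required invertibility of $F_2$ with bounded inverse.
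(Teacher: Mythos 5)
Your treatment of the case $r_+<0$ matches the paper's: you shift by $-r_+$, take $P=-A+r_+I$ and $G(z)=\tilde{f_2}(z-r_+)$, and \refL{Lem f2 > 0} then gives nonvanishing of $G$ on all of $(0,+\infty)$, which is exactly what hypothesis $(ii)$ of \refL{Lem inv} demands. The gap is in the case $r_+>0$. There you set $r=\max(-r_+,0)=0$, so you are applying \refL{Lem inv} to the \emph{unshifted} pair $P=-A$, $G=\tilde{f_2}$, and you propose to verify hypothesis $(ii)$ only on $\sigma(-A)\subset[\|A^{-1}\|_{\L(X)}^{-1},+\infty)$. But \refL{Lem inv} requires $G(x)\neq 0$ for \emph{every} $x\in\RR_+\setminus\{0\}$, not merely on the spectrum of $P$, and \refL{Lem f2 > 0} controls $f_2$ (hence, through the factorization \eqref{f2(z)}, the factor $\tilde{f_2}$) only on $[tr_+,+\infty)$, leaving $(0,tr_+)$ entirely uncontrolled. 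Nothing in the lemma as stated lets you replace the half-line by the spectrum; you would need a genuinely stronger functional-calculus result to argue that way, so as written the hypotheses of \refL{Lem inv} are not verified.

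The paper closes exactly this hole by shifting in the $r_+>0$ case as well: it takes $G(z)=\tilde{f_2}(z+tr_+)$ and $P=-A-tr_+I$. Then $G(x)=\tilde{f_2}(x+tr_+)$ is nonzero for all $x>0$ precisely because $\tilde{f_2}$ does not vanish on $(tr_+,+\infty)$, and the constraint $t\in\left(0,\tfrac{1}{r_+\|A^{-1}\|_{\L(X)}}\right)$ guarantees $tr_+<\|A^{-1}\|_{\L(X)}^{-1}$, i.e. $-tr_+\in\rho(A)$, so that $-A-tr_+I$ is still an invertible sectorial BIP operator and \refL{Lem inv} applies. In other words, the inequality on $t$ is not there to localize $\sigma(-A)$ inside the positivity range of $f_2$, but to keep the shifted operator invertible. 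Once you perform this shift, your argument goes through, and the rest of your verification (the $\mathcal{E}_\infty$ membership of $1-G$ and the positivity of the $u$, $v$ factors) is sound; note also that the nonvanishing of $b_5$ on the relevant interval already follows from \eqref{f2(z)} together with $f_2\neq 0$ there, so your separate sign analysis of the factorization of $b_5$ is not needed.
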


\begin{proof}
From \refP{Prop B5 inv} and \eqref{F2}, we deduce that $F_2$ is well defined. 
\begin{itemize}
\item Assume that $r_+ > 0$ such that \eqref{hyp r+ > 0} holds. Then, from \refL{Lem f2 > 0} and \eqref{f2(z)}, it follows that $f_2$ does not vanish on $(tr_+,+\infty)$, for $t> 0$ fixed, which involves that $u_{d,r_+}^{-2}$, $v_{d,r_+}^{-2}$, $u_{c,0}^{-2}$, $v_{c,0}^{-2}$, $b_5$ and $\tilde{f_2}$ do not vanish on $(tr_+,+\infty)$, for $t> 0$ fixed. Moreover, due to $(H_2)$, there exists $R= \frac{1}{\|A^{-1}\|_{\L(X)}} > 0$ such that $B(0,R) \subset \rho(A)$. Therefore, setting $\tilde{f}_{tr_+,2}(z) = \tilde{f_2}(z+tr_+)$, with $t \in \left(0, \frac{1}{r_+\|A^{-1}\|_{\L(X)}}\right)$ fixed and applying \refL{Lem inv} where we have set $G = \tilde{f}_{t r_+,2}$ and operator $P=-A-tr_+I \in$ BIP\,$(X,\theta_A)$ (due to $(H_2)$ and $(H_3)$), we deduce that operator $F_2 = \tilde{f}_{tr_+,2}(-A-tr_+I) = \tilde{f_2}(-A)$ is invertible with bounded inverse.

\item Now, assume that $r_+ < 0$ such that \eqref{hyp r+ < 0} holds. Then $\tilde{f_2}$ does not vanish on $(-r_+,+\infty)$. Moreover, from $(H_2)$ and $(H_3)$, we have $-A + r_+ I \in$ BIP\,$(X,\theta_A)$. It follows that $F_2 = \tilde{f}_{-r_+,2}(-A+r_+I) = \tilde{f_2}(-A)$ is invertible with bounded inverse. 
\end{itemize}
\end{proof}
This result finally leads us to state the following main result of this section.
\begin{Prop}\label{Prop det lambda2 inv}
Assume that $(H_1)$, $(H_2)$, $(H_3)$, $(H_4)$ hold and $k_+k_- >0$ with $\dis\frac{k_-}{k_+} \leqslant 2$. Thus
\begin{itemize}
\item if $r_+ > 0$ such that \eqref{hyp r+ > 0 F2 inv} holds, 

\item if $r_+ < 0$ such that \eqref{hyp r+ < 0} holds, 
\end{itemize}
then $\det(\Lambda_2)$ is invertible with bounded inverse.
\end{Prop}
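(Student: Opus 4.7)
The proof proposal is essentially an assembly step: all the real work has been packaged into the preceding results, and this proposition is the place where those pieces are combined.

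The plan is to start from the factorization \eqref{det Lammbda2 = W-2 B5 F2}, namely
$$\det(\Lambda_2) = W^{-2}\,B_5\,F_2,$$
and observe that it expresses $\det(\Lambda_2)$ as a product of three operators acting on $X$. It then suffices to show that each of the three factors is invertible with bounded inverse, since the composition of boundedly invertible operators is boundedly invertible. The operator $W = U_-U_+V_-V_+$ is a product of four factors, each of which is invertible with bounded inverse by \refL{lem U+- V+- inv} (which applies since $(H_1)$–$(H_4)$ hold), and thus $W^{-2}\in\mathcal{L}(X)$.

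For the factor $B_5 = 8k_+k_-(L_++M)^2M - 16k_-^2 M^3$, one invokes \refP{Prop B5 inv}, whose hypotheses are exactly $(H_1)$, $(H_2)$, $(H_3)$ together with $k_+k_->0$ and $k_-/k_+ \leqslant 2$; these are all assumed in the statement of the present proposition. For the factor $F_2$ defined by \eqref{F2}, one invokes \refP{Prop F2 inv}, which requires $(H_1)$–$(H_4)$, $k_+k_->0$, $k_-/k_+\leqslant 2$, and precisely the two alternative conditions on $r_+$ appearing here: either $r_+>0$ satisfying \eqref{hyp r+ > 0 F2 inv}, or $r_+<0$ satisfying \eqref{hyp r+ < 0}. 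In each of our two cases, the assumptions match.

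Combining these three points, $\det(\Lambda_2) = W^{-2}B_5F_2$ is a composition of three boundedly invertible operators on $X$, hence is itself invertible with bounded inverse, and
$$\det(\Lambda_2)^{-1} = F_2^{-1}\,B_5^{-1}\,W^{2},$$
which belongs to $\mathcal{L}(X)$. There is no real obstacle at this stage; the only mild subtlety is bookkeeping, namely checking that the hypotheses of \refP{Prop B5 inv} and \refP{Prop F2 inv} are indeed covered by the hypotheses of the present proposition in both the $r_+>0$ and $r_+<0$ cases. The genuinely delicate analytic work, specifically the positivity estimates of \refL{Lem f2 > 0} and the functional-calculus argument behind \refL{Lem inv}, has already been carried out upstream.
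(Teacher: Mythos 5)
Your proof is correct and follows exactly the same route as the paper: the paper's own proof is the one-line observation that \eqref{det Lammbda2 = W-2 B5 F2} together with \refP{Prop B5 inv} and \refP{Prop F2 inv} gives the bounded invertibility of $\det(\Lambda_2) = W^{-2}B_5F_2$. Your additional remarks on the invertibility of $W$ via \refL{lem U+- V+- inv} and on the hypothesis bookkeeping are accurate and merely make explicit what the paper leaves implicit.
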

\begin{proof}
From \eqref{det Lammbda2 = W-2 B5 F2}, \refP{Prop B5 inv} and \refP{Prop F2 inv}, we obtain that $\det(\Lambda_2) = W^{-2}B_5F_2$, is invertible with bounded inverse.
\end{proof}

\subsection{Regularity}

\subsubsection{First case}

Here, we consider $r_+,r_- \in \RR \setminus\{0\}$. From \refT{Th syst trans}, we have to prove that system \eqref{syst trans gén} has a unique solution $(\psi_1,\psi_2)$ satisfying \eqref{reg (psi1,psi2)}. The existence and uniqueness of this solution is ensured by \refP{Prop det lambda1 inv}, so we have
\begin{equation}\label{syst psi1 psi2 1}
\left\{\begin{array}{lll}
\psi_1 & = & \dis \left(P_1^- - P_1^+\right)\left[\det(\Lambda_1)\right]^{-1} S_1 - \left(P_2^+ + P_2^- \right)\left[\det(\Lambda_1)\right]^{-1} S_2 \\ \\

\psi_2 & = & \dis - \left(P_3^+ - P_3^- \right)\left[\det(\Lambda_1)\right]^{-1} S_1 + M \left(P_1^- - P_1^+ \right) \left[\det(\Lambda_1)\right]^{-1} S_2.
\end{array}\right.
\end{equation}
Now, we have to study the regularity of $\left[\det(\Lambda_1)\right]^{-1}$. Since, in this case, the determinant $\det(\Lambda_1)$ is the same than the one in \cite{LLMT}, we deduce, from \cite{LLMT}, Lemma 5.3, p. 2958, that there exists $R_{\det(\Lambda_1)} \in \L(X)$ such that
$$R_{\det(\Lambda_1)}(X) \subset D(M), \quad \left[\det(\Lambda_1)\right]^{-1} = N^{-1} + N^{-1}R_{\det(\Lambda_1)},$$
where $N = 4 k_+k_- (L_- + M)(L_+ + M)(L_+ + L_- + 2M)$.
Then, the rest of the proof is similar to the one given in \cite{LLMT}, section 5.3. Therefore, from \eqref{S2} and \eqref{S1}, it follows that $S_1, S_2 \in (D(M),X)_{1 + \frac{1}{p},p}$ and thus
\begin{equation}\label{det-1 S1 S2}
\left[\det(\Lambda)\right]^{-1}S_1,\left[\det(\Lambda)\right]^{-1}S_2 \in (D(M),X)_{4+\frac{1}{p},p}.
\end{equation}
Moreover, from \eqref{syst psi1 psi2 1}, we have
\begin{equation}\label{syst psi1 psi2 2}
\left\{\begin{array}{lll}
\psi_1 & = & \dis -2\left(k_+ (L_+ + M) - k_- (L_- + M)\right) \left[\det(\Lambda_1)\right]^{-1} S_1 \\ \ecart
&& \dis + 2\left( k_+ (L_+ + M) - k_- (L_- + M) \right) \left[\det(\Lambda_1)\right]^{-1} S_2 + \tilde{S}_1 \\ \\

\psi_2 & = & \dis -2\left(k_+ (L_+ + M)L_+ + k_- (L_- + M)L_-\right) \left[\det(\Lambda_1)\right]^{-1} S_1 \\ \ecart
&& \dis - 2\left(k_+ (L_+ + M) - k_- (L_- + M)\right) \left[\det(\Lambda_1)\right]^{-1} S_2 + \tilde{S}_2,
\end{array}\right.
\end{equation}
where $\tilde{S}_1$, $\tilde{S}_2 \in D(M^\infty)$. Finally, from \eqref{égalités espaces interpolations}, \eqref{det-1 S1 S2} and \eqref{syst psi1 psi2 2}, we obtain 
$$\left\{\begin{array}{l}
\psi_1 \in (D(M),X)_{3 + \frac{1}{p},p} = (D(A),X)_{1 + \frac{1}{2p},p} \\ \ecart
\psi_2 \in (D(M),X)_{2 + \frac{1}{p},p} = (D(A),X)_{1 + \frac{1}{2} + \frac{1}{2p},p}.
\end{array}\right.$$

\subsubsection{Second case}

Here, we consider $r_+ \in \RR \setminus\{0\}$ and $r_- = 0$. From \refT{Th syst trans M}, we have to prove that \eqref{syst trans M} has a unique solution $(\psi_1,\psi_2)$ satisfying \eqref{reg (psi1,psi2)}. The existence and uniqueness of this solution is ensured by \refP{Prop det lambda2 inv}, so we have
\begin{equation}\label{syst psi1 psi2 1 M}
\left\{\begin{array}{lll}
\psi_1 & = & \dis \left(2M Q_1^- - P_1^+ \right)\left[\det(\Lambda_2)\right]^{-1} S_3 + \left(P_2^+ + 2M Q_2^-\right) \left[\det(\Lambda_2)\right]^{-1} S_4 \\ \\

\psi_2 & = & \dis -\left(P_3^+ + 2M Q_3^- \right)\left[\det(\Lambda_2)\right]^{-1} S_3 + M \left(P_1^+ - 2M Q_1^- \right) \left[\det(\Lambda_2)\right]^{-1} S_4.
\end{array}\right.
\end{equation}
Now, we have to study the regularity of $\left[\det(\Lambda_2)\right]^{-1}$. From \eqref{U V +-}, \eqref{B4}, \eqref{det Lammbda2 = W-2 B5 F2}, \eqref{F2} and \cite{LMMT}, Lemma 5.1, p. 365, we deduce that there exists $R_{\det(\Lambda_2)} \in \L(X)$ such that
$$R_{\det(\Lambda_2)}(X) \subset D(M), \quad \left[\det(\Lambda_2)\right]^{-1} = B_5^{-1} + B_5^{-1}R_{\det(\Lambda_2)},$$
where we recall that $B_5 = 8 k_+k_- (L_+ + M)^2 M - 16 k_-^2 M^3$. Moreover, from \eqref{reg phi +-}, \eqref{égalités espaces interpolations}, \eqref{phi i tilde +} and \eqref{phi i tilde - M}, we have
$$\tilde{\varphi}_1^+, \tilde{\varphi}_2^-,\tilde{\varphi}_2^+,\tilde{\varphi}_3^+, \tilde{\varphi}_4^-, \tilde{\varphi}_4^+ \in \left(D(M),X\right)_{2+\frac{1}{p},p} \quad \text{and} \quad \tilde{\varphi}_1^-,\tilde{\varphi}_3^- \in \left(D(M),X\right)_{3+\frac{1}{p},p}.$$
Thus, from \eqref{S3}, \eqref{S4}, \eqref{R2}, \refR{Rem F+-} and \refR{Rem tilde F-}, we deduce that
$$R_2 \in \left(D(M),X\right)_{\frac{1}{p},p} \quad \text{and} \quad S_3, S_4 \in \left(D(M),X\right)_{1+\frac{1}{p},p},$$
which implies that
\begin{equation}\label{det-1 S3 S4}
\left[\det(\Lambda_2)\right]^{-1} S_3,~\left[\det(\Lambda_2)\right]^{-1} S_4 \in  \left(D(M),X\right)_{4+\frac{1}{p},p}.
\end{equation}
Moreover, due to \eqref{Pi+}, \eqref{Qi-}, \eqref{syst psi1 psi2 1 M} and \cite{LMMT}, Lemma 5.1, p. 365, we have
\begin{equation}\label{syst psi1 psi2 2 M}
\left\{\begin{array}{lll}
\psi_1 & = & \dis -2\left(k_+ (L_+ + M) - 2k_- M\right) \left[\det(\Lambda_2)\right]^{-1} S_3 \\ \ecart
&& \dis + 2\left( k_+ (L_+ + M) + 2 k_- M \right) \left[\det(\Lambda_2)\right]^{-1} S_4 + \tilde{S}_3 \\ \\

\psi_2 & = & \dis -2\left(k_+ (L_+ + M)L_+ + 2 k_- M\right) \left[\det(\Lambda_2)\right]^{-1} S_3 \\ \ecart
&& \dis + 2 M \left(k_+ (L_+ + M) - 2 k_- M\right) \left[\det(\Lambda_2)\right]^{-1} S_4 + \tilde{S}_4,
\end{array}\right.
\end{equation}
where $\tilde{S}_3$, $\tilde{S}_4 \in D(M^\infty)$. Finally, from \eqref{égalités espaces interpolations}, \eqref{det-1 S3 S4} and \eqref{syst psi1 psi2 2 M}, we obtain 
$$\left\{\begin{array}{l}
\psi_1 \in (D(M),X)_{3 + \frac{1}{p},p} = (D(A),X)_{1 + \frac{1}{2p},p} \\ \ecart
\psi_2 \in (D(M),X)_{2 + \frac{1}{p},p} = (D(A),X)_{1 + \frac{1}{2} + \frac{1}{2p},p}.
\end{array}\right.$$

\section*{Acknowledgments} 

I would like to thank the referee for the valuable comments and corrections which have improved this paper.

\end{document}